\documentclass[11pt]{amsart}

\usepackage{amsfonts,amsthm,amsmath,enumerate,amssymb,mathrsfs,latexsym,color,bm,bbm}
\usepackage[text={6in,9in},centering]{geometry}
\usepackage[backref=page,
            colorlinks,
            linkcolor=blue,
            anchorcolor=red,
            citecolor=red]{hyperref}
\usepackage{tikz}
\usepackage{verbatim, soul}
\usetikzlibrary{arrows.meta,calc,positioning}

\theoremstyle{plain}
\newtheorem{theorem}{Theorem}[section]
\newtheorem{lemma}[theorem]{Lemma}
\newtheorem{corollary}[theorem]{Corollary}
\newtheorem{proposition}[theorem]{Proposition}
\newtheorem{conjecture}[theorem]{Conjecture}

\theoremstyle{definition}
\newtheorem{definition}[theorem]{Definition}

\theoremstyle{remark}
\newtheorem{remark}[theorem]{Remark}

\numberwithin{equation}{section}

\DeclareMathOperator{\dimh}{dim_H}

\DeclareMathOperator{\spt}{spt}
\DeclareMathOperator{\dist}{dist}
\DeclareMathOperator{\diam}{diam}
\DeclareMathOperator{\diag}{diag}

\newcommand{\rd}{\,\mathrm{d}}

\def\R{\mathbb R}
\def\Z{\mathbb Z}
\def\T{\mathbb T}
\def\N{\mathbb N}
\def\Q{\mathbb Q}
\def\D{\mathcal D}

\def\h{\mathcal H}
\def\i{\bm i}

\def\u{\bm u}

\def\p{\bm p}
\def\a{\bm a}
\def\b{\bm b}
\def\hl{\widehat \ell}

\makeatletter
\def\l@subsection{\@tocline{2}{0pt}{2.5pc}{5pc}{}}
\def\l@subsubsection{\@tocline{2}{0pt}{5pc}{7.5pc}{}}
\makeatother

\begin{document}

\title[Positive-measure intersections of Bedford--McMullen carpets]{Positive-measure intersections of Bedford--McMullen carpets under similitudes}

\author{Ding-Kun Hu}
\address{School of Mathematical Sciences, Zhejiang University, Hangzhou, China}
\email{12635001@zju.edu.cn}

\author{Huo-Jun Ruan}
\address{School of Mathematical Sciences, Zhejiang University, Hangzhou, China}
\email{ruanhj@zju.edu.cn}

\author{Jian-Ci Xiao$^{\ast}$}
\address{School of Mathematics, Nanjing University of Aeronautics and Astronautics, Nanjing, China}
\email{jcxiao@nuaa.edu.cn}
\thanks{$^\ast$Corresponding author.}

\subjclass[2020]{Primary 28A80; Secondary 28A75}
\keywords{Bedford--McMullen carpet, self-affine measure, positive-measure intersection, oblique similitude, tangent set}

\begin{abstract}
    Let $K=K(n,m,\Gamma)$ be a Bedford--McMullen carpet with $n>m\geq2$ and $1<\#\Gamma<nm$, and let $\mu$ be a self-affine measure on $K$ with strictly positive weights.  We prove that every similitude $f(z)=rOz+a$ satisfying $\mu(K\cap f(K))>0$ has a diagonal or anti-diagonal orthogonal part $O$, without any arithmetic restrictions on the defining bases. Under the same intersection hypothesis, we also show that $r\in n^{\Q}\cup m^{\Q}$ whenever $K$ is not a segment. For isometries satisfying this hypothesis, we further prove that $f(K)$ is simply a translate of $K$, thereby establishing a strengthened form of the 2010 conjecture of Elekes, Keleti and M\'ath\'e in this planar self-affine context. 
\end{abstract}

\maketitle

\section{Introduction}\label{sec:introduction}
\subsection{Background and main results}

Let $K\subset\R^d$ be a self-similar or self-affine set and let $\mu$ be a Borel probability measure on $K$. A basic rigidity question asks which similitudes $f$ satisfy the condition $\mu(K\cap f(K))>0$. In particular, one asks whether the positive-measure intersection forces $f$ to respect the geometric structure of $K$, and how such rigidity depends on the algebraic properties of the maps generating the set.

For homogeneous self-similar sets, a connection between intersections and the defining contractions dates back to Furstenberg's intersections conjecture, which was independently verified by Shmerkin~\cite{Shm19} and Wu~\cite{Wu19}. Feng, Huang, and Rao~\cite{FHR14} investigated a weaker form in the inhomogeneous case. Let $E, F\subset\R^d$ be nontrivial self-similar sets. Suppose the iterated function system generating $E$ satisfies the open set condition and $\dimh F$ equals its similarity dimension. They proved that, if $F$ cannot be affinely embedded into $E$, then
\[
    \dimh(E\cap g(F))<\dimh F
\]
for every $C^1$-diffeomorphism $g$ of $\R^d$. Thus an intersection with Hausdorff dimension equal to $\dimh F$ forces an affine embedding. They further conjectured that affine embeddings between totally disconnected self-similar sets impose a logarithmic commensurability relation on the associated contraction ratios. For recent progress on this conjecture, see~\cite{Alg20-2, AHW26, FX18} and references therein.

Complementing these dimensional results, Elekes, Keleti and M\'ath\'e~\cite{EKM10} studied positive-measure intersections of self-similar and self-affine sets. For a self-similar set satisfying the strong separation condition, they proved that its intersection with a similar copy has positive self-similar measure if and only if the intersection has non-empty relative interior~\cite[Theorem~4.5]{EKM10}. They also established the corresponding statement for translates of self-affine Sierpi\'nski sponges equipped with the uniform self-affine measure, and further conjectured that for such sponges, every positive-measure intersection with an isometric copy is equal to an intersection with a translate.

We resolve this conjecture for Bedford--McMullen carpets~\cite{Bed84,Mcm84}, whose construction distinguishes two coordinate directions with different contraction ratios. Let us first recall the definition. For an integer $q\geq1$, write $[q]:=\{0,1,\ldots,q-1\}$.

\begin{definition}\label{def:bmcarpet}
    Let $n>m\geq 2$ be integers and let $\Gamma\subset[n]\times[m]$ satisfy $1<\#\Gamma<mn$ (where $\#$ denotes cardinality). For $(i,j)\in [n]\times[m]$, let 
    \[
        \phi_{i,j}(x,y) = \Big( \frac{x+i}{n},\frac{y+j}{m} \Big), \quad (x,y)\in\R^2.
    \]
    The \emph{Bedford--McMullen carpet}  associated with $\{\phi_{i,j}\}_{(i,j)\in\Gamma}$ is the unique non-empty compact set $K=K(n,m,\Gamma)\subset\R^2$ satisfying $K=\bigcup_{(i,j)\in\Gamma} \phi_{i,j}(K)$.
\end{definition}

For any such carpet $K=K(n,m,\Gamma)$ and any probability vector $\p=(p_{i,j})_{(i,j)\in\Gamma}$ with strictly positive entries, let $\mu_{\p}$ be the self-affine measure on $K$ determined by
\[
    \mu_{\p}=\sum_{(i,j)\in\Gamma}p_{i,j}\phi_{i,j}\mu_{\p}.
\]
Here $g\lambda$ denotes the pushforward of a measure $\lambda$ under a Borel map $g$, that is, $g\lambda(E)=\lambda(g^{-1}(E))$. Since the weights are strictly positive, $\spt\mu_{\p}=K$. We call $\mu_{\p}$ \emph{uniform} when $p_{i,j}=1/\#\Gamma$ for every $(i,j)\in\Gamma$.

The conjecture~\cite[Conjecture 9.1]{EKM10} was formulated for self-affine Sierpi\'nski sponges in arbitrary dimensions. In the planar self-affine setting, it takes the following form.

\begin{conjecture}[\cite{EKM10}]
    Let $K=K(n,m,\Gamma)$ be a Bedford--McMullen carpet and let $\mu$ be its uniform self-affine measure. For every isometry $f$, if $\mu(K\cap f(K))>0$ then $K\cap f(K)=K\cap (K+t)$ for some $t\in\R^2$.
\end{conjecture}

Our first main result establishes this conjecture in a strengthened form. It identifies the entire isometric copy as a translate and allows arbitrary self-affine measures.

\begin{theorem}\label{thm:ekmconj}
    Let $K=K(n,m,\Gamma)$ be a Bedford--McMullen carpet and let $\mu_{\p}$ be a self-affine measure on $K$ with positive weights. For any isometry $f$, if $\mu_{\p}(K\cap f(K))>0$, then $f(K)=K+t$ for some $t\in\R^2$. 
\end{theorem}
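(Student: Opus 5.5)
We reduce Theorem~\ref{thm:ekmconj} to the two structural results announced in the abstract, and then rule out the remaining reflections by hand. Write the isometry as $f(z)=Oz+a$. By the result that a positive-measure intersection forces a diagonal or anti-diagonal orthogonal part, $O$ is one of the eight matrices $\diag(\pm1,\pm1)$ or $\begin{pmatrix}0&\pm1\\ \pm1&0\end{pmatrix}$. We will take that result as proved later in the paper, and we take the scaling statement as automatic since $r=1$.

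\textbf{Step 1 (anti-diagonal $O$).} We show the anti-diagonal case cannot occur. Such an $O$ exchanges the horizontal and vertical directions. Take a generic point $z$ of $K\cap f(K)$ that is a $\mu_{\p}$-density point (for the approximate squares of $K$) and also a density point of $f\mu_{\p}$. Near $z$, the approximate squares of $K$ at scale $n^{-k}$ look like $K$ restricted to a strip of width $n^{-k}$ and height $m^{-\ell}$ stacked along vertical fibres. Those of $f(K)$ are rotated copies, with the long direction horizontal. Positive measure of the intersection forces the tangent measures of $\mu_{\p}$ and of $f\mu_{\p}$ at $z$ to be mutually absolutely continuous on a set of positive mass. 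However, tangent sets of $K$ are products of a horizontal Cantor-type set with vertical fibres whose structure is governed by the $m$-adic projection. Rotating swaps these roles, and comparing projections then forces $\#\Gamma\in\{1,nm\}$ or the degenerate situation in which $K$ is a segment. I expect this to be covered by the earlier part of the paper. If it is not, we argue directly with the projection $\pi_2\mu_{\p}$, which is $m$-adic self-similar, against $\pi_1 f\mu_{\p}$, which is $n$-adic, using a Furstenberg/Host-type disjointness argument for $\times n$ and $\times m$. That disjointness needs care when $\log n/\log m\in\Q$.

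\textbf{Step 2 (reflections $\diag(\pm1,\pm1)$ other than the identity).} Take for instance $O=\diag(-1,1)$. Then $f(K)$ is again a Bedford--McMullen carpet $K(n,m,\Gamma')+t$, where $\Gamma'=\{(n-1-i,j)\}$, carrying the self-affine measure $f\mu_{\p}$. This reduces the problem to translates of two carpets with the same bases. Now use symbolic coding. Since $\mu_{\p}(K\cap(K'+t))>0$, a Lebesgue-density/martingale argument along approximate squares shows the following. For $\mu_{\p}$-typical $z$ and all large $k$, the approximate square $Q_k(z)$ of $K$ meets $K'+t$ in a proportion of its measure tending to $1$. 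By Step 1's rescaling, the blow-ups $T_k(K\cap Q_k(z))$ and $T_k((K'+t)\cap Q_k(z))$ then converge, along a subsequence, to tangent sets that coincide up to a null set. The tangent sets of $K$ along typical points are finite unions of translates of the "column products" $\pi_1$-fibre structure built from $\Gamma$, and similarly for $\Gamma'$. Equality then gives $\Gamma'=\Gamma+(c,0)$ digitwise, so $\Gamma'=\Gamma$, since both lie in $[n]\times[m]$. Hence $K'=K$, and consequently $f(K)=K+t$. The case $\diag(1,-1)$ is symmetric, and $-I$ is the composition of the two.

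\textbf{Main obstacle.} The key step is extracting an exact digit identity from a positive-measure intersection. The translation $t$ need not be $n$-adic or $m$-adic, so the approximate squares of $K$ and $K'+t$ are misaligned. We first treat $t$ with $n$-adic/$m$-adic coordinates and then reduce general $t$ to this case. For the reduction, we first show that positive measure forces each coordinate of $t$ to be eventually compatible: its fractional expansions must be such that overlapping columns carry nonzero mass for all scales. Here we would use that, for non-degenerate $\Gamma$, a misaligned overlap loses a fixed proportion of mass at each of infinitely many scales, which drives $\mu_{\p}(K\cap f(K))$ to zero.
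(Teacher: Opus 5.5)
Your use of the non-obliqueness theorem, and your rewriting of a diagonal reflection as a translate of a second carpet $K(n,m,\Gamma')$, match the paper. Both remaining steps, however, have genuine gaps.

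\textbf{Step 1 is false: the anti-diagonal case does occur.} With $C$ the middle-third Cantor set, $C\times C=K(9,3,\{0,2,6,8\}\times\{0,2\})$ is invariant under $(x,y)\mapsto(y,x)$. So no argument can force $\#\Gamma\in\{1,nm\}$, and $\times n$/$\times m$ disjointness fails exactly in the dependent case $\log n/\log m\in\Q$, which is where this happens. (Also, $\pi_1 f\mu$ is an affine image of $\pi_2\mu$, hence $m$-adic; the comparison should be with $\pi_1\mu$.)

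What is needed is a case split.
\begin{enumerate}
    \item \emph{Independent bases.} Project to get $\pi_1\mu(X\cap(\delta Y+c))>0$, blow up at a density point to obtain an affine embedding $X\hookrightarrow Y$, and apply Shmerkin's embedding theorem. This needs a proper factor. When $X=Y=[0,1]$ the projections carry no information, so axis interchange must be excluded by the two-dimensional full-row argument, which shows that $O$ is diagonal whenever $J=[m]$.
    \item \emph{Dependent bases $n^p=m^q$.} You must prove $K=X\times Y$ with $\dimh X=\dimh Y$. The paper composes $f^{-1}\phi_{w_t}$ with $(x,y)\mapsto(nx,my)$ on $\T^2$, iterated $pqt$ times. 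The two output coordinates then depend on disjoint digit blocks, which are independent under $\nu$, and cylinder density yields a product of the toral projections inside $K_\T$. One-dimensional rigidity for each factor, written in the common base, then finishes.
\end{enumerate}

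\textbf{Step 2 has no working mechanism.} Blow-ups at density points give, at best, a covering relation between limiting products $D(C(\xi)\times Y)$. These record only the row fibres along a limiting tail address, not the pattern $\Gamma$, so no identity $\Gamma'=\Gamma+(c,0)$ can be read off. The inference ``$\Gamma'=\Gamma+(c,0)$, hence $\Gamma'=\Gamma$'' is also wrong: for $\Gamma=\{0,1\}^2\subset[4]\times[3]$, the reflected pattern is $\Gamma+(2,0)$.

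More seriously, your reduction to adic $t$ rests on a false claim. Take $K=C\times[0,1]=K(3,2,\{0,2\}\times\{0,1\})$ and $f(x,y)=(x,1+s-y)$. Then $f(K)=K+(0,s)$ and $\mu(K\cap f(K))>0$ for every $s\in(0,1)$, so misaligned overlaps need not lose mass.

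The paper proceeds as follows.
\begin{enumerate}
    \item It applies one-dimensional rigidity to the projections: an inclusion $E(q,\D')\subset E(q,\D)+c+\Z$ with $\#\D=\#\D'$ forces the shift to be $q$-adic and the normalized digit sets to coincide.
    \item This lets it translate $OK$ to a carpet $K'$ with $\pi_i(K')=\pi_i(K)$ and $\#\Gamma'=\#\Gamma$.
    \item It then runs a two-pattern version of the Elekes--Keleti--M\'ath\'e mass-drop argument. If $\sup_u\mu(K\cap(K'+u))<1$, then inside a deep density cylinder the mass splits between two integer translates $v_1\neq v_2$. With $M=\diag(n,m)$ and $\Gamma_2=M\Gamma+\Gamma$, this gives $\Gamma_2+(v_2-v_1)+M^2\Z^2=\Gamma_2+M^2\Z^2$.
    \item That identity forces $K=E(n,I)\times[0,1]$ or $K=[0,1]\times E(m,J)$, which are exactly the non-adic exceptions above. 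Equal projections then give $\Gamma'=\Gamma$.
\end{enumerate}
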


The key ingredient of our proof is a restriction on the orthogonal part of an arbitrary similitude. Note that a similitude $f$ on $\R^2$ has the form $f(z)=rOz+a$, where $r>0$ is the \emph{similarity ratio}, $O\in O(2)$ is the \emph{orthogonal part}, and $a\in\R^2$ is the \emph{translation part}. Following~\cite{Xiao26}, we call $f$ \emph{oblique} if $O$ is neither diagonal nor anti-diagonal. Thus a non-oblique similitude preserves the horizontal and vertical directions as an unordered pair.

Previous work established this non-oblique restriction for self-embeddings. Under the multiplicative independence condition $\frac{\log n}{\log m}\notin\Q$, Algom and Hochman~\cite{AH19} proved that every similitude sending $K$ into itself is an isometry with diagonal orthogonal part, provided either that the carpet is not a product or that both factors have Hausdorff dimensions strictly between zero and one. The third author~\cite{Xiao26} subsequently proved non-obliqueness for all self-embeddings, assuming only that the carpet is not contained in a line. 

Our second main result shows that the positive-measure intersection already forces non-obliqueness.

\begin{theorem}\label{thm:nonoblique}
	Let $K=K(n,m,\Gamma)$ be a Bedford--McMullen carpet and let $\mu_{\p}$ be a self-affine measure on $K$ with positive weights. Assume that a similitude $f(z)=rOz+a$ on $\R^2$ satisfies $\mu_{\p}(K\cap f(K))>0$. Then $O$ is either diagonal or anti-diagonal.
\end{theorem}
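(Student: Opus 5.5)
We will argue by contradiction via tangent sets, i.e.\ blow-ups of $K$ at $\mu_{\p}$-typical points. Suppose $O$ is oblique and $\mu_{\p}(K\cap f(K))>0$. Let $E=K\cap f(K)$. We will show first that, for $\mu_{\p}$-a.e.\ $z\in E$, the set $K$ and the set $f(K)$ have the same tangent sets at $z$. This is because $\mu_{\p}$-typical points of $E$ are density points of $E$ in a suitable ``approximate square'' sense. The appropriate neighbourhoods here are the approximate squares $Q_k(z)$ of side about $n^{-k}$, which are the natural symbolic cylinders for Bedford--McMullen carpets. We will use a Lebesgue density theorem for $\mu_{\p}$ along these approximate squares. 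Such a theorem holds because they form a nested family with bounded overlap (a Vitali-type/martingale argument). We also need $f^{-1}\mu_{\p}|_E\ll\mu_{\p}$ on $E$ in an appropriate sense, or at least that $f^{-1}(E)$ has positive $\mu_{\p}$-measure. We will obtain this by replacing $E$ with a subset and using a Lebesgue density argument for the pushforward, since both points $z$ and $f^{-1}(z)$ are in $K$.

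Next we will describe the tangent sets of $K$ at typical points. Rescaling $Q_k(z)$ to the unit square, by the non-conformal maps $\diag(n^k,m^{\ell(k)})$ composed with a translation, yields limits. For typical $z$ these limits are of the form (product of a horizontal piece of $K$) $\times$ (projection fibre structure), i.e.\ sets of the type $\pi_1$-slice $\times\,\pi_2(K)$ or unions of vertical translates of $\pi_2K$-type sets. The key point is that every such tangent set $T$ is a finite or countable union of sets invariant in a grid structure aligned with the coordinate axes. In particular, $T$ contains a nontrivial ``axis-parallel'' product piece $A\times B$ with $A,B$ not both single points, unless $K$ lies in a line. Because $f$ is a similitude, the corresponding tangent of $f(K)$ at $z$ is $rO$ applied to a tangent of $K$ at $f^{-1}(z)$, after we choose matching scales. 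Here we must be careful. Tangents under similitudes are taken along Euclidean balls, so we will instead use the Euclidean-ball (microset) version of tangents. For this version we will show that the microsets at typical points contain, after rescaling, full sets of the form $\{x\}\times\pi_2(K)$-like vertical fibres concentrated along horizontal lines, because $n>m$ makes the vertical direction dominate at Euclidean scales.

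Comparing the two structures will give the contradiction. Microsets of $K$ at typical points are unions of translates of vertical sets $\{t\}\times\pi_2 K$, taken along a horizontally sparse set. The microsets of $f(K)$ have the same structure, with the vertical direction replaced by the rotated direction $O e_2$. A compact set arising as a common microset would then be foliated by pieces in both directions $e_2$ and $Oe_2$. Since $O$ is oblique, $Oe_2\notin\{\pm e_1,\pm e_2\}$. We will use a projection/dimension count to rule this out. Since $\#\Gamma<nm$ we have $\pi_2K$ or some fibre not full, which gives a dimension deficit in some direction. The degenerate cases (all digits in one row or one column, where $K$ is a segment or a product with a Cantor factor) will be handled separately by a direct one-dimensional argument.

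I expect the main obstacle to be the density step, namely transferring positive $\mu_{\p}$-measure of $K\cap f(K)$ into simultaneous typicality at $z$ for $K$ and at $f^{-1}(z)$ for $K$ under the generally singular measure $f\mu_{\p}$. To get past it, I would first try a disintegration along vertical fibres, using that $\mu_{\p}$ projects to a self-similar measure on $\pi_1K$.
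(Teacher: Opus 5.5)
There is a genuine gap, and it sits exactly at the step you flag as the main obstacle; your proposed remedy does not close it. To say that $K$ and $f(K)$ have the same (or even comparable) tangents at $z$, you need $f^{-1}(z)$ to be a $\mu_{\p}$-typical point of $K$. The hypothesis only gives $f^{-1}\mu_{\p}(f^{-1}(E))=\mu_{\p}(E)>0$. Nothing in your argument rules out $f^{-1}\mu_{\p}|_{f^{-1}(E)}\perp\mu_{\p}$. Even $\mu_{\p}(f^{-1}(E))>0$ would not give a $\mu_{\p}$-positive set of $z\in E$ at which $z$ and $f^{-1}(z)$ are simultaneously typical, because $f$ may carry $\mu_{\p}$-typical sets onto $\mu_{\p}$-null sets. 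No Lebesgue density or disintegration argument bridges this.

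The paper never needs preimage typicality. Density is used only at $z$, together with full support of the limiting conditional measures (Lemma~\ref{lem:fullsptlimit}). The set $f^{-1}(Q_\ell(\omega))\cap K$ is then simply covered by at most $9m$ approximate squares (Lemma~\ref{lem:finite-covering}), which also makes Euclidean microsets unnecessary. The outcome is only a one-sided inclusion $D_0H(\xi)\subset\bigcup_{t}(\kappa OD_1H(\eta_t)+c_t)$, with $H(\eta)=C(\eta)\times\pi_2(K)$. Here the row addresses $\eta_t$ come from preimage points over which one has no a priori control.

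Because the covering pieces are blow-ups at arbitrary points, your comparison step fails as stated: they need not be horizontally sparse. If $J=[m]$ and $j_*$ is a full row, then $H(j_*^\infty)=[0,1]^2$, which is foliated in every direction, so no projection or dimension count can contradict the inclusion. This is the hardest case in the paper, and the missing idea is to control the preimage row addresses. A full row supplies $n^\ell$ equally spaced horizontal translates, giving the uniform spreading estimate of Lemma~\ref{lem:spreading} for $\alpha x+\beta y+\gamma\pmod\Z$ with $|\alpha|\geq1$. Averaging along $y\mapsto my\pmod\Z$ then shows that $\pi_2f^{-1}(z)\pmod\Z\notin\mathcal{Y}$ for $\mu_{\p}$-a.e.\ $z$ (Proposition~\ref{prop:sparse-null}); obliqueness enters exactly here, as the nonzero horizontal coefficient of $\pi_2f^{-1}$. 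A recurrence argument (Proposition~\ref{prop:good-scales}) then selects scales at which every $C(\eta_t)$ is Lebesgue-null. By Baire's theorem, the vertical segments in $D_0H(\xi)$ then cannot be covered.

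Your comparison is also unspecified in the vacant-row case. There the inclusion reads $D_0(E'\times F)\subset\bigcup_t(\kappa OD_1(C(\eta_t)\times F)+c_t)$, with $E'=E(n,\Gamma_{j_0})$ and $F=E(m,J)$. Slice dimensions can coincide, for example for the product of the middle-third Cantor set with itself written with $n=9$, $m=3$. Lower bounds for $\dim(aE'+bF)$ in an arbitrary oblique direction are a nontrivial sumset problem, and the theorem assumes no arithmetic condition. The paper instead argues topologically. Baire's theorem reduces matters to contracting similitudes $y\mapsto g_2(x,y)$ sending $F$ into itself for every $x\in E'$. The one-sided isolated-point Lemma~\ref{lem:isolated-points} then pushes $g_2(x,y_0)$ into a gap of $F$ as $x\downarrow\min E'$.
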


Neither theorem requires the multiplicative independence condition between $n$ and $m$, nor does either assume the strong separation condition. Theorem~\ref{thm:nonoblique} recovers the non-obliqueness statement for self-embeddings by applying it to $f^{-1}$. Indeed, if $f(K)\subset K$, then $K\subset f^{-1}(K)$, and hence $\mu_{\p}(K\cap f^{-1}(K))=1$. The hypothesis $n>m$ is essential for Theorem~\ref{thm:nonoblique}. In the self-similar case $n=m$, a generalized Sierpi\'nski carpet may admit an oblique reflection preserving it; see~\cite[Example 6.1]{Xiao26}.

\begin{remark}
    Our proof provides further information when $f$ is an axis-interchanging isometry. In this case, a positive-measure intersection forces $\frac{\log n}{\log m}\in\Q$ and $K$ to be a product set; see Subsection~\ref{subsec:axis-interc-isom}. Such a phenomenon occurs. For example, the product of the middle-third Cantor set with itself has a Bedford--McMullen representation with $n=9$, $m=3$ and is invariant under coordinate interchange.
\end{remark}

We also obtain an arithmetic restriction on the similarity ratio. The following theorem extends the logarithmic commensurability results for self-embeddings in~\cite{AH19,Xiao26} to the present setting. For carpets that are not product sets with multiplicatively independent bases, it extends the isometry conclusion of~\cite[Theorem 1.1]{AH19}.


\begin{theorem}\label{thm:log}
    Under the assumptions of Theorem~\ref{thm:nonoblique}, if in addition $K$ is not a segment, we have the following arithmetic restriction on the similarity ratio.  
    \begin{enumerate}
        \item If $O$ is anti-diagonal, then $\frac{\log n}{\log m}\in\Q$ and $r\in n^{\Q}=m^{\Q}$;
        \item If $O$ is diagonal, then $r\in n^{\Q}\cup m^{\Q}$. Moreover, $r=1$ provided either $K$ is not a product set, or $\frac{\log n}{\log m}\notin\Q$ and both factors have Hausdorff dimensions strictly between zero and one.
    \end{enumerate}
\end{theorem}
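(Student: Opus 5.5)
The plan is a tangent-set (blow-up) argument at a $\mu_{\p}$-typical point of $K\cap f(K)$. By Theorem~\ref{thm:nonoblique}, $O$ is diagonal or anti-diagonal, so $f$ maps approximate squares to rectangles that are still axis-parallel. Lebesgue density for $\mu_{\p}$ on approximate squares gives a point $z\in K\cap f(K)$ such that $K\cap f(K)$ has $\mu_{\p}$-density one at $z$ along the approximate squares $Q_k(z)$ of level $k$; these have width $n^{-k}$ and height about $n^{-k}$, made of an $n^{-k}\times m^{-\ell(k)}$ cylinder with $\ell(k)=\lfloor k\log n/\log m\rfloor$. Since $\spt\mu_{\p}=K$, density one should upgrade, after rescaling $Q_k(z)$ to the unit square, to Hausdorff convergence. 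Along subsequences, the rescaled pieces of $K$ and of $f(K)$ then converge to the same limit set.

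The rescaled pieces of $K$ are tangent sets of the form $\bigl(\pi_1$-fibre structure$\bigr)$: products $A\times B$-type unions built from a horizontal level-$k$ cylinder pattern and a vertical projection pattern. Their scaling "phases" are governed by $\{k\log n/\log m\}$. The rescaled pieces of $f(K)$ are $f$ applied to tangents of $K$ at scale $n^{-k}/r$, possibly with $x$ and $y$ swapped if $O$ is anti-diagonal.

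\textbf{Case (1), $O$ anti-diagonal.} Comparing the two tangent limits, horizontal fine structure at scale $n^{-j}$ must match vertical structure at scale $m^{-j'}$. This should be possible only when the projections are nontrivial in both directions, and only if $\log n/\log m\in\Q$. Here I would use that $K$ is not a segment, so that both $\pi_1 K$ and $\pi_2 K$, or the fibres, are non-degenerate.

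\textbf{Case (2), $O$ diagonal.} Compare scales. The limit set is invariant, up to the natural tangent dynamics, under zooming by $n$ horizontally and by $m$ vertically. Matching $K$-tangents with $f(K)$-tangents forces $\log r$ to lie in the group generated by the self-similarities of the tangent sets, namely $\Z\log n$ in the horizontal factor and $\Z\log m$ in the vertical one. This yields $r\in n^{\Q}\cup m^{\Q}$. To get this I would first pass to a higher iterate, so that $\Q$-multiples appear.

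For the sharpened claim $r=1$, the assumptions are that $K$ is not a product set, or that $\log n/\log m\notin\Q$ with both factor dimensions in $(0,1)$. Under either assumption, the tangent sets at typical points are genuinely non-product, or have two independent scalings, exactly as in \cite{AH19,Xiao26}. I would invoke their argument at the level of tangents, where the intersection hypothesis replaces the embedding hypothesis, and conclude that $r=1$.

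The main obstacle is the passage from a positive-measure intersection to equality of tangent sets. Density one of $K\cap f(K)$ in $\mu_{\p}$ does not immediately give Hausdorff closeness of $K$ and $f(K)$ inside $Q_k(z)$, because $f(K)$ could carry extra pieces of small $\mu_{\p}$-mass. I would handle this by also using the pushforward measure $f\mu_{\p}$ and the symmetric condition $\mu_{\p}(f^{-1}K\cap K)>0$. I would then choose $z$ to be a density point for both measures, so that each set lies within $o(1)$ of the other after rescaling.
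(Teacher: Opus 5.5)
\textbf{Main gap.} Your proposal does not actually derive the arithmetic conclusions; the decisive steps are stated as expectations.

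In Case (1) you assert that matching horizontal with vertical structure forces $\frac{\log n}{\log m}\in\Q$, but you give no mechanism. This is not a soft consequence of comparing tangents. Already for a genuine self-embedding it amounts to ruling out affine embeddings between $E(n,I)$ and $E(m,J)$ for multiplicatively independent bases, and for this the paper invokes Shmerkin's theorem.

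In Case (2), the claim that $\log r$ lies in ``the group generated by the self-similarities of the tangent sets'' is exactly the content of Feng--Wang's logarithmic commensurability theorem: a similitude mapping $E(q,\D)$, $1<\#\D<q$, into itself has ratio in $q^{\Q}$. Non-integer powers genuinely occur. For instance, $E(9,\{0,2,6,8\})=E(3,\{0,2\})$ is mapped into itself by $x\mapsto x/3$. Passing to an iterate does not produce them: an iterate of the IFS only shrinks $\Z\log n$ to $\Z p\log n$, and iterates of $f$ need not satisfy the intersection hypothesis. Worse, an interval has self-similarities of every ratio, so your heuristic gives nothing when a projection is $[0,1]$.

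The missing idea is the paper's reduction to one dimension, in three steps:
\begin{enumerate}
\item Project the intersection to the axes to get $\pi_1\mu\bigl(X\cap(r\delta_1X+a_1)\bigr)>0$ and $\pi_2\mu\bigl(Y\cap(r\delta_2Y+a_2)\bigr)>0$, with $X$ and $Y$ interchanged on one side in the anti-diagonal case.
\item Blow these one-dimensional intersections up at a density point. This gives genuine similitude embeddings whose ratios lie in $r^{-1}q^{\Z}$, respectively $r^{-1}q^{\Q}$ (Lemma~\ref{lem:deleted-embedding} and Remark~\ref{rem:ratio-log}).
\item Apply Shmerkin's theorem and Feng--Wang.
\end{enumerate}

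For $K\neq X\times Y$, and in particular when $X=Y=[0,1]$, the conclusion $r=1$ needs a separate argument. Since diagonal maps commute with $M=\diag(n,m)$, one has $M^tf^{-1}\phi_{w_t}(z)=r^{-1}O^{-1}z+c_t$. A density and Baire argument then produces a genuine self-embedding $g(K)\subset K$ with linear part $\diag(\pm r^{-1}n^{-k},\pm r^{-1}m^{-k})$. A dedicated lemma shows that for non-product $K$ any such diagonal embedding satisfies $\frac{\log|r_1|}{\log n}=\frac{\log|r_2|}{\log m}$, which forces $r=1$. Your appeal to ``their argument at the level of tangents'' does not supply this lemma. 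In the independent-bases subcase, the paper simply uses $r\in n^{\Q}\cap m^{\Q}=\{1\}$.

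\textbf{Two further problems.} First, the ``main obstacle'' you identify is one you do not need to overcome, and your fix for it is unjustified. The quantity $\mu(K\cap f^{-1}(K))=f\mu(K\cap f(K))$ differs from $\mu(K\cap f(K))$, and its positivity is not part of the hypothesis. Even when it holds, $\mu$ and $f\mu$ are in general mutually singular (distinct Bernoulli weights), so nothing guarantees a common density point. The paper only ever uses one-sided information: the rescaled pieces of $K$ lie in the limit of the rescaled pieces of $f(K)$, or $r^{-1}O^{-1}K+c_*\subset K+\Z^2$. That suffices.

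Second, in Case (1) what is needed is not that the projections are non-degenerate but that they are \emph{proper} deleted-digit sets. If $J=[m]$, the projections carry no obstruction at all. The paper excludes this case by Remark~\ref{rem:full-no-interchange}: the limiting target contains a vertical segment, while every covering set has Lebesgue-null fibres. After that, $h_2(X)\subset Y$ forces $X$ to be proper as well. The hypothesis that $K$ is not a segment is used only in the diagonal case. There it guarantees that either $1<\#I<n$, or $1<\#J<m$, or $I=[n]$ and $J=[m]$.
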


The arithmetic conclusion in Theorem~\ref{thm:log}(2) cannot be strengthened to $r\in n^{\Q}\cap m^{\Q}$ in general. For example, if $n=4$, $m=3$, and $\Gamma=[4]\times\{0,2\}$, then $K$ is the product of $[0,1]$ with the middle-third Cantor set. The similitude $f(x,y)=(x/3,y/3)$ maps $K$ into itself and $f(K)\supset \phi_{0,0}(K)$. So $K\cap f(K)$ has positive self-affine measure, but $1/3\notin 4^{\Q}$.

Intersection rigidity has been studied from other perspectives under different hypotheses as well. For totally disconnected self-similar and self-affine sets satisfying the weak separation condition, Deng and Wang~\cite{DW18} obtained a uniform drop in the mass of the intersection under small nontrivial perturbations, which extends~\cite[Theorem 4.1]{EKM10}. For pairs of carpets with incommensurable defining bases, Algom~\cite{Alg20} obtained dimension bounds on the intersection for affine maps preserving coordinate directions, established mutual singularity of the associated measures under dimension assumptions, and identified obstructions to affine embeddings under additional hypotheses. In this paper, we consider copies of a single carpet and obtain restrictions on the similitudes that produce positive-measure intersections.

\subsection{Proof strategy}

A self-embedding provides a global inclusion $f(K)\subset K$ and hence the iterated inclusions $f^k(K)\subset K$. This allows a direct comparison between the local geometry of $K$ and its embedded images; see~\cite{AH19,Xiao26}. Under the weaker assumption $\mu_{\mathbf{p}}(K\cap f(K))>0$, we need instead recover geometric information from the density on the intersection. The self-affine setting presents another difficulty. Normalizing a cylinder of a Bedford--McMullen carpet involves an anisotropic change of coordinates, which obstructs a direct extension of the self-similar argument.

We work with approximate squares and their rescaled limits, following the tangent set approach of Algom and Hochman~\cite{AH19}. Up to bounded diagonal changes of coordinates, these limits are products of horizontal fibres and the vertical projection of $K$. Moreover, along convergent subsequences, the limiting conditional measures have full support on the limiting sets. We choose squares in which the relative measure of the intersection tends to one, and cover their inverse images in $K$ by a uniformly bounded number of comparable approximate squares. Together with the full support property, this gives an inclusion of one limiting product set in a finite union of transformed limiting product sets;  see Proposition~\ref{prop:prescribed-cover}.

For Theorem~\ref{thm:nonoblique}, such an inclusion is incompatible with obliqueness. If every row is non-empty and no row is full, then the vertical projection of $K$ is the unit interval and every horizontal fibre has zero length. Hence the target product set contains nondegenerate vertical intervals. On the other hand, every oblique image of a source product set intersects each vertical line in a set of zero length, which contradicts the inclusion. If there exists a vacant row, we prescribe a limiting horizontal fibre associated with a chosen row and apply the one-sided boundary obstruction from~\cite{Xiao26}.

The remaining case, in which every row is non-empty and some row is full, requires a more delicate argument. For self-embeddings, a full row provides a horizontal segment, and the absence of oblique segments (see~\cite[Proposition~2.2]{Xiao26}) immediately excludes obliqueness. A positive-measure intersection may avoid that segment. Furthermore, choosing a density point gives no control over the preimage digit frequencies. We therefore estimate the distribution of the heights of the inverse images and exclude addresses whose digits spend asymptotically all their time in full rows. This allows us to select scales at which all the approximate squares in the local cover have limiting horizontal fibres of zero length. The selection is stable under small height perturbations; see Proposition~\ref{prop:good-scales}. Hence the vertical-interval obstruction applies again.

For Theorem~\ref{thm:ekmconj}, Theorem~\ref{thm:nonoblique} reduces the problem to isometries that preserve or interchange the coordinate directions. In the former case, we extend the argument of Algom and Hochman~\cite[Proposition~3.1]{AH19} from one digit set to two digit sets of equal cardinality. We then find a translation vector $b\in\R^2$ such that $OK+b$ has the same coordinate projections as $K$, and a two-pattern analogue of~\cite[Proposition~7.1]{EKM10} shows that $OK+b=K$.

Assume that the isometry $f$ interchanges the coordinate directions. When $\frac{\log n}{\log m}\notin\Q$, a positive-measure intersection yields an affine embedding between the coordinate projections, and Shmerkin's theorem~\cite[Corollary~7.6]{Shm19} rules out such an embedding. For dependent bases, we apply $f^{-1}$ to small cylinders and iterate $(x,y)\mapsto (nx,my)$ until the vertical scale is one. The horizontal scale then tends to infinity. At any fixed scale, the first coordinate depends on a distant block of the original vertical address, whereas the second depends on an initial block of the horizontal address. These blocks correspond to disjoint levels and are independent under the Bernoulli measure. Passing to a subsequence along which the translation terms converge, this independence and cylinder density yield a product set contained in the toral image of $K$, with factors isometric to the toral images of its coordinate projections. This forces $K$ itself to be a product set. Expressing both projections in a common base then completes the proof.


Finally, for Theorem~\ref{thm:log}, if some coordinate projection of $K$ is a proper deleted-digit set, the intersection hypothesis yields affine embeddings between the projections, with similarity ratios determined by $r$ and the defining bases. Feng and Wang's logarithmic commensurability theorem~\cite{FW09} and Shmerkin's embedding theorem~\cite{Shm19} then give the desired arithmetic restriction. When both coordinate projections are intervals, interchange of the coordinates is excluded by our earlier rigidity argument. Normalizing at a cylinder density point and passing to a limit, we obtain a diagonal affine self-embedding of $K$. We show that its logarithmic contraction rates relative to $n$ and $m$ must coincide, which forces $f$ to be an isometry.

\begin{remark}
    Earlier descriptions of tangent sets for certain self-affine carpets were obtained by Bandt and K\"aenm\"aki~\cite{BK13}.
    The local geometry of self-affine measures on Bedford--McMullen carpets has also been studied through their scaling scenery. Ferguson, Fraser and Sahlsten~\cite{FFS15} describe the limiting scenery of Bernoulli measures in terms of products of a coordinate marginal and a conditional fibre measure, up to diagonal rescaling.
\end{remark}

\subsection{Organization of the paper}

In Section~\ref{sec:symbolic-pre}, we fix notation and develop the density and compactness arguments for approximate squares, including the construction with a prescribed limiting fibre. Sections~\ref{sec:nonobliqueness} and~\ref{sec:EKMconj} prove Theorems~\ref{thm:nonoblique} and~\ref{thm:ekmconj}, respectively. In Section~\ref{sec:log} we prove Theorem~\ref{thm:log}.

\medskip
\paragraph{{\bf Notation}}
For $z\in \R^2$, $E\subset\R^2$ and $\delta>0$, let $B(z,\delta):=\{z'\in \R^2: |z'-z|<\delta\}$ denote the open ball of radius $\delta$ centered at $z$ and let $\mathcal{N}_\delta(E)$ denote the closed $\delta$-neighborhood of $E$. For a measure $\lambda$, let $\spt\lambda$ denote its support. We write $\dimh$ for Hausdorff dimension and write $\pi_1, \pi_2$ for the orthogonal projections onto the $x$- and $y$-axes, respectively. Finally, we write $gh$ for the composition $g\circ h$.

\section{Symbolic density and approximate squares}
\label{sec:symbolic-pre}

In this section, we derive a finite covering relation between limiting product sets from the assumption that $K\cap f(K)$ has positive measure. We begin by fixing notation, and then describe the geometry of approximate squares and show how symbolic density passes to their limits. The final proposition allows us to prescribe the horizontal fibre on the target side.

\subsection{Preliminaries}

Throughout this paper, we fix integers $n>m\geq 2$ and a Bedford--McMullen carpet $K$ as defined in Definition~\ref{def:bmcarpet}. We adopt the following row notation:
\begin{itemize}
    \item $\Gamma_j :=\{i\in[n]:(i,j)\in\Gamma\}$ is the set of horizontal digits in the $j$-th row;
    \item $J :=\{j\in[m]:\Gamma_j\neq\varnothing\}$ is the set of non-empty rows;
    \item $J_* :=\{j\in[m]: \Gamma_j=[n]\}$ is the set of full rows;
    \item $q_j :=\sum_{i\in\Gamma_j}p_{i,j}$ is the total weight of the $j$-th row. Note that $q_j>0$ for $j\in J$ and $\sum_{j\in J}q_j=1$.
\end{itemize}

Two elementary observations will be used when passing to limits of rescaled approximate squares.

\begin{lemma}\label{lem:hd-continuity}
    Let $E_k,E\subset\R^2$ be non-empty compact sets such that $E_k\to E$ in the Hausdorff metric. Let $A_k,A\in M_2(\R)$ satisfy $A_k\to A$ in operator norm, and let $t_k,t\in\R^2$ satisfy $t_k\to t$. Then $A_k(E_k+t_k)\to A(E+t)$ in the Hausdorff metric.
\end{lemma}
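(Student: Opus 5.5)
The proof is a direct estimate in the Hausdorff metric $d_H$. We split the distance with the triangle inequality:
\[
d_H\bigl(A_k(E_k+t_k),A(E+t)\bigr)\le d_H\bigl(A_k(E_k+t_k),A(E_k+t_k)\bigr)+d_H\bigl(A(E_k+t_k),A(E+t)\bigr).
\]
We then show that each term tends to zero. Compactness is used only to ensure that all the sets involved are non-empty compact sets, so that $d_H$ is defined. It also ensures that the sets $E_k+t_k$ lie in a common bounded region.

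For the first term, note that $E_k\to E$ in $d_H$ and $t_k\to t$. Hence there is $R>0$ with $E_k+t_k\subset B(0,R)$ for all $k$: every point of $E_k$ lies within $d_H(E_k,E)\le 1$ of the bounded set $E$ for large $k$, and $|t_k|$ is bounded. For any non-empty compact $F\subset B(0,R)$, each point $Bz$ is matched with $Cz$, which gives
\[
d_H(BF,CF)\le \sup_{z\in F}|(B-C)z|\le \|B-C\|R.
\]
Applying this with $B=A_k$, $C=A$ and $F=E_k+t_k$ bounds the first term by $\|A_k-A\|R\to0$.

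For the second term, the map $z\mapsto Az$ is $\|A\|$-Lipschitz, and a Lipschitz map of constant $L$ satisfies $d_H(AF,AG)\le L\,d_H(F,G)$. Translations are isometries for $d_H$ in the sense that $d_H(F+s,G+s')\le d_H(F,G)+|s-s'|$. Therefore
\[
d_H\bigl(A(E_k+t_k),A(E+t)\bigr)\le\|A\|\bigl(d_H(E_k,E)+|t_k-t|\bigr)\to0,
\]
which completes the proof.

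The argument has no real obstacle. The only point needing care is the uniform bound $R$, which is what makes the operator-norm convergence $A_k\to A$ control the first term uniformly in $k$.
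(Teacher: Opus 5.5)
Your proof is correct and is essentially the paper's argument. Both use a two-term triangle-inequality split into a matrix-perturbation error of size $\|A_k-A\|R$ and a set-and-translation error of size (Lipschitz constant)$\cdot\bigl(d_H(E_k,E)+|t_k-t|\bigr)$. The only difference is the intermediate object. The paper passes pointwise through $A_k(E+t)$, so it needs $\sup_k\|A_k\|$ but only a bound on the limit set $E+t$. You pass through $A(E_k+t_k)$, so you need only $\|A\|$ but a uniform bound on all the sets $E_k+t_k$. Your bound holds for large $k$ as you argue, and for the finitely many remaining $k$ by compactness of each $E_k$.
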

\begin{proof}
    Let $M:=\sup_k\|A_k\|<\infty$ and pick a large $R>0$ such that $E+t\subset\overline{B(0,R)}$. Since $E_k\to E$, there exists $\delta_k\to 0$ such that $E_k\subset\mathcal{N}_{\delta_k}(E)$ and $E\subset\mathcal{N}_{\delta_k}(E_k)$. Letting
    \[
        \varepsilon_k := M(\delta_k+|t_k-t|)+R\|A_k-A\|,
    \]
    we have $\varepsilon_k\to 0$.

    For each $k$, if $x\in E_k$ then there exists $y\in E$ such that $|x-y|\leq\delta_k$. Then 
    \begin{align*}
        |A_k(x+t_k)-A(y+t)| &\leq |A_k(x+t_k)-A_k(y+t)|+|(A_k-A)(y+t)| \\
        &\leq \|A_k\|(|x-y|+|t_k-t|) + \|A_k-A\||y+t| \\
        &\leq \varepsilon_k.
    \end{align*}
    So $A_k(E_k+t_k)\subset\mathcal{N}_{\varepsilon_k}(A(E+t))$. Similarly, one can show that $A(E+t)\subset\mathcal{N}_{\varepsilon_k}(A_k(E_k+t_k))$. Since $\varepsilon_k\to 0$, the desired Hausdorff convergence follows.
\end{proof}

\begin{lemma}\label{lem:finite-cover-limit}
    Let $p\geq 1$ and let $\lambda_k,\lambda$ be Borel probability measures on $\R^2$ such that $\lambda_k\to\lambda$ weakly. For $1\leq t\leq p$, let $F_{k,t}$ and $F_t$ be non-empty compact subsets of $\R^2$ such that $F_{k,t}\to F_t$ in the Hausdorff metric. If $\lambda_k( \bigcup_{t=1}^p F_{k,t} ) \to 1$ then $\spt\lambda\subset\bigcup_{t=1}^p F_t$.
\end{lemma}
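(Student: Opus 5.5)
We argue by contradiction, using the portmanteau theorem for closed sets. Set $F:=\bigcup_{t=1}^p F_t$, a non-empty compact set. Suppose some $z\in\spt\lambda$ lies outside $F$. Since $F$ is closed, there is $\delta>0$ with $\overline{B(z,2\delta)}\cap \mathcal{N}_{\delta}(F)=\varnothing$. Because $z\in\spt\lambda$, the open ball satisfies $\lambda(B(z,\delta))>0$; call this number $\eta$.

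The first step is to trap the approximating sets near $F$. Hausdorff convergence $F_{k,t}\to F_t$ for each of the finitely many $t$ gives an index $k_0$ such that $F_{k,t}\subset\mathcal{N}_{\delta}(F_t)$ for all $k\geq k_0$ and all $t$. Hence $\bigcup_t F_{k,t}\subset \mathcal{N}_\delta(F)$ for $k\geq k_0$, and so
\[
    \lambda_k(\mathcal{N}_\delta(F))\geq \lambda_k\Big(\bigcup_{t=1}^p F_{k,t}\Big)\to 1 .
\]

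The second step passes this to the limit. The set $\mathcal{N}_\delta(F)$ is closed, so the portmanteau theorem gives
\[
    \lambda(\mathcal{N}_\delta(F))\geq \limsup_k \lambda_k(\mathcal{N}_\delta(F)) = 1 .
\]
Thus $\lambda$ gives full mass to $\mathcal{N}_\delta(F)$. But $B(z,\delta)$ is disjoint from $\mathcal{N}_\delta(F)$ and has $\lambda$-mass $\eta>0$, which contradicts $\lambda$ being a probability measure. Therefore $\spt\lambda\subset F$.

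The only point needing care is that the fixed closed neighbourhood $\mathcal{N}_\delta(F)$, rather than the moving sets $\bigcup_t F_{k,t}$, must be used when applying the closed-set half of portmanteau. The finiteness of $p$ is what makes a single $k_0$ work uniformly in $t$. No real obstacle is expected.
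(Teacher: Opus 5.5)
Your proof is correct and follows essentially the paper's argument: you trap $\bigcup_t F_{k,t}$ in the fixed closed neighbourhood $\mathcal{N}_\delta(F)$ and apply the closed-set half of the portmanteau theorem. The only cosmetic difference is that the paper lets $\delta\downarrow 0$ to obtain $\lambda(F)=1$ directly, whereas you fix one $\delta$ adapted to a hypothetical point of $\spt\lambda\setminus F$ and argue by contradiction.
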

\begin{proof}
    Write $F:=\bigcup_{t=1}^p F_t$. For every $\delta>0$, $\bigcup_{t=1}^p F_{k,t}\subset\mathcal{N}_\delta(F)$ for all large $k$ and hence $\lambda_k(\mathcal{N}_\delta(F))\to1$. Since $\mathcal{N}_\delta(F)$ is compact,
    \[
        \lambda(\mathcal{N}_\delta(F)) \geq \limsup_{k\to\infty} \lambda_k(\mathcal{N}_\delta(F))=1.
    \]
    Letting $\delta\downarrow 0$, we obtain $\lambda(F)=1$. Since $F$ is closed, $\spt\lambda\subset F$.
\end{proof}

\subsection{Coding and the geometry of approximate squares}

Let $\Sigma:=\Gamma^{\N}$ and let $\nu_{\p}$ be the Bernoulli measure on $\Sigma$ with weights $\p=(p_{i,j})_{(i,j)\in\Gamma}$. The coding map $\Pi\colon\Sigma\to K$ is defined by
\[
    \Pi(\omega) = \Big( \sum_{t=1}^{\infty}\frac{i_t}{n^t}, \sum_{t=1}^{\infty}\frac{j_t}{m^t} \Big), \quad \omega=(i_1,j_1)(i_2,j_2)\cdots\in\Sigma.
\]
Then $\mu_{\p}=\Pi\nu_{\p}$. From now on, we keep $\p$ fixed and omit this subscript from both measures. Let $\Gamma^*:=\bigcup_{k=1}^\infty \Gamma^k$ be the set of finite words. For any $\u=u_1u_2\cdots u_k\in \Gamma^*$, let 
\[
    [\u]:=\{u'_1u'_2\cdots\in \Sigma: u'_t=u_t \text{ for } t\leq k\}
\]
be the corresponding cylinder and let $\phi_{\u}:=\phi_{u_1}\cdots\phi_{u_k}$. Here we slightly abuse the notation $[\cdot]$; the meaning is clear from the context.
By definition, $\Pi(\u\omega)=\phi_{\u}(\Pi\omega)$ for all $\u\in \Gamma^*$ and $\omega\in \Sigma$.

For $\ell\geq 1$, set $\widehat{\ell}:=\lceil\ell\log_m n\rceil$. Equivalently, $m^{-\widehat{\ell}}\leq n^{-\ell}<m^{-\widehat{\ell}+1}$. So $m^{-\widehat{\ell}}$ and $n^{-\ell}$ are uniformly comparable. Moreover, $\hl -\ell\to\infty$ as $\ell\to\infty$. 
For $\omega=(i_t,j_t)_t\in\Sigma$, define the \emph{symbolic approximate square}
\[
    C_\ell(\omega) := \left\{ (u_t,v_t)_{t}\in\Sigma:
    \begin{array}{ll}
        (u_t,v_t)=(i_t,j_t)&\text{for }t\leq\ell,\\
        v_t=j_t&\text{for }\ell<t\leq \hl
    \end{array} \right\},
\]
and put $Q_\ell(\omega):=\Pi(C_\ell(\omega))$. Thus $C_\ell(\omega)$ fixes the first $\ell$ horizontal digits and the first $\widehat{\ell}$ vertical digits. We refer to these sets as \emph{approximate squares of level $\ell$}, with the word \emph{symbolic} distinguishing $C_\ell(\omega)$ from its image $Q_\ell(\omega)$. Their symbolic masses are
\[
    \nu(C_\ell(\omega)) = \prod_{t=1}^{\ell} p_{i_t,j_t}\prod_{t=\ell+1}^{\hl} q_{j_t}.
\]
The partitions of $\Sigma$ with atoms $C_\ell(\omega)$ refine one another and generate the Borel $\sigma$-algebra of $\Sigma$.

For a finite or infinite word $\a=(a_t)$ over $[n]$, write $P_n(\a) := \sum_t a_tn^{-t}$, where the sum is taken over all its coordinates; $P_m$ is defined analogously.  Following~\cite{AH19}, for $\b=b_1\cdots b_q\in J^q$ we set
\begin{equation*} 
    H(\b) := \bigcup_{\substack{\a=a_1\cdots a_q \\ a_t\in\Gamma_{b_t}}} \left[ \begin{pmatrix} n^{-q} & 0 \\ 0 & 1 \end{pmatrix} K + \begin{pmatrix} P_n(\a) \\ 0 \end{pmatrix} \right] \subset [0,1]^2.
\end{equation*}
Roughly speaking, $H(\b)$ records the horizontal structure of $K$ along the row indexed by $\b$, while leaving the vertical coordinate unscaled. For $\eta=(j_t)_{t}\in J^{\N}$, let
\begin{equation}\label{eq:Ceta-Heta}
    C(\eta) := \Big\{ \sum_{t=1}^{\infty}\frac{a_t}{n^t}: a_t\in\Gamma_{j_t} \text{ for all $t$} \Big\} \quad\text{and}\quad H(\eta) := C(\eta)\times\pi_2(K).
\end{equation}
The set $C(\eta)$ consists of the horizontal coordinates corresponding to the row address $\eta$. It is exactly the full horizontal fibre at height $P_m(\eta)$ whenever $P_m(\eta)$ admits a unique base-$m$ expansion. The next lemma identifies $H(\eta)$ as the limit obtained from long finite row words converging coordinatewise to $\eta$.

\begin{lemma}\label{lem:fibreconverge}
    Let $\ell_k\to\infty$ and let $\b_k\in J^{\ell_k}$. If $\b_k\to\eta\in J^{\N}$ coordinatewise, then $H(\b_k)\to H(\eta)$ in the Hausdorff metric.
\end{lemma}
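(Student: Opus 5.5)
The plan is to compute both sets explicitly and show that each lies in a small neighborhood of the other. Both have the form (horizontal set) $\times\,\pi_2(K)$, up to a small error in the first coordinate.

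First I rewrite $H(\b)$ for $\b=b_1\cdots b_q\in J^q$. Since
\[
\begin{pmatrix} n^{-q}&0\\0&1\end{pmatrix}K+\begin{pmatrix}P_n(\a)\\0\end{pmatrix}\subset [P_n(\a),P_n(\a)+n^{-q}]\times\pi_2(K),
\]
and this set projects onto $\pi_2(K)$ in the second coordinate, every point of $H(\b)$ lies within horizontal distance $n^{-q}$ of the set
\[
D(\b)\times\pi_2(K),\qquad D(\b):=\{P_n(\a): a_t\in\Gamma_{b_t}\}.
\]
Conversely, every point $(P_n(\a),y)$ with $y\in\pi_2(K)$ lies within horizontal distance $n^{-q}$ of $H(\b)$. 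So $d_H\bigl(H(\b),D(\b)\times\pi_2(K)\bigr)\le n^{-q}$.

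Similarly, $C(\eta)$ lies within distance $n^{-q}$ of $D(\eta|_q)$. This holds because truncating the expansion $\sum_t a_tn^{-t}$ after $q$ terms changes it by at most $\sum_{t>q}(n-1)n^{-t}=n^{-q}$, and every truncation extends to a point of $C(\eta)$; here $\Gamma_{j_t}\neq\varnothing$ since $j_t\in J$. Hence $d_H\bigl(H(\eta),D(\eta|_q)\times\pi_2(K)\bigr)\le n^{-q}$.

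Now fix $\varepsilon>0$ and choose $N$ with $2n^{-N}<\varepsilon$. Coordinatewise convergence $\b_k\to\eta$ means that for all large $k$ we have $\ell_k\ge N$ and $\b_k|_N=\eta|_N$. I then apply the same comparison at level $N$ rather than $\ell_k$. Each piece of $H(\b_k)$ indexed by $\a$ lies in $[P_n(\a),P_n(\a)+n^{-\ell_k}]\times\pi_2(K)$. Moreover $P_n(\a)$ differs from $P_n(a_1\cdots a_N)$ by less than $n^{-N}$, and $a_1\cdots a_N$ ranges exactly over the admissible words for $\eta|_N$.

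So every point of $H(\b_k)$ is within $n^{-N}$ of $D(\eta|_N)\times\pi_2(K)$. Conversely, every point of that product is within $n^{-N}$ of $H(\b_k)$: extend the prefix to any admissible $\a$ of length $\ell_k$, and use the fact that the piece's second-coordinate projection is all of $\pi_2(K)$. Thus $d_H\bigl(H(\b_k),D(\eta|_N)\times\pi_2(K)\bigr)\le n^{-N}$.

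Combining this with the estimate for $H(\eta)$ at level $N$ gives $d_H\bigl(H(\b_k),H(\eta)\bigr)\le 2n^{-N}<\varepsilon$ for all large $k$. No real obstacle is expected. The only point needing care is that the pieces of $H(\b)$ project onto all of $\pi_2(K)$ vertically, since the vertical direction is unscaled; this is what makes both directions of the Hausdorff estimate work.
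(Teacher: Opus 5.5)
Your proof is correct and is essentially the paper's argument. The paper matches each point of $H(\b_k)$ directly with a point of $H(\eta)$ that has the same second coordinate and the same first $d_k$ horizontal digits, where $d_k$ is the common prefix length of $\b_k$ and $\eta$, giving the bound $n^{-d_k}$; you route the same matching through the intermediate set $D(\eta|_N)\times\pi_2(K)$ and the triangle inequality, which only costs a harmless factor of $2$ (and your initial estimate for $H(\b)$ at its own level $q$ is not actually used).
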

\begin{proof}
    This is the varying-word form of~\cite[Lemma 7.1]{AH19}.
    Write  $\eta=j_1j_2\cdots$ and $\b_k=b_{k,1}\cdots b_{k,\ell_k}$. A point $z\in H(\b_k)$ has the form 
    \[
        z = (P_n(a_1\cdots a_{\ell_k})+n^{-\ell_k}x, y),
    \]
    where $a_t\in\Gamma_{b_{k,t}}$ for $1\leq t\leq \ell_k$ and $(x,y)\in K$.
    
    Let $d_k$ be the length of the longest common prefix of $\b_k$ and $\eta$. By assumption, $d_k\to\infty$. Fix $k\geq 1$. Pick an infinite word $\a'=a'_1a'_2\cdots$ by setting $a'_t=a_t$ for $1\leq t\leq d_k$ and choosing for $t>d_k$ an arbitrary digit $a'_t\in\Gamma_{j_t}$. This is possible because $j_t\in J$. Then the point $z'=(P_n(\a'),y)\in H(\eta)$. The two points share the same second coordinate, and their first coordinates lie in the same interval $P_n(a_1\cdots a_{d_k})+[0,n^{-d_k}]$. Thus $|z-z'|\leq n^{-d_k}$. Since $z$ was arbitrary, $H(\b_k)\subset\mathcal{N}_{n^{-d_k}}(H(\eta))$.
    The converse direction can be proved analogously.
\end{proof}

To apply this convergence to approximate squares, we identify their rescaled form. The following identity is the counterpart of~\cite[Lemma 7.4]{AH19} for our choice of scales.

\begin{lemma}\label{lem:local-form}
    Let $\omega=(i_t,j_t)_t\in\Sigma$. For every $\ell\geq 1$ and $z\in Q_\ell(\omega)$, there is $z'\in[-1,0]^2$ such that 
    \[
        n^\ell(Q_\ell(\omega)-z) = 
        \begin{pmatrix}
            1 & 0 \\ 0 & m^{s_\ell}
        \end{pmatrix}
        (H(\b)+z'),
    \]
    where $s_\ell:=\ell\log_m n-\hl \in(-1,0]$ and $\b=j_{\ell+1}\cdots j_{\hl}$.
\end{lemma}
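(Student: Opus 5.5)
The identity is a direct computation: I will write $Q_\ell(\omega)$ explicitly as a union of affine images of $K$ and then rescale. First I parametrize $C_\ell(\omega)$. A sequence in $C_\ell(\omega)$ has the form
\[
(i_1,j_1)\cdots(i_\ell,j_\ell)(a_1,b_1)\cdots(a_q,b_q)\,\omega',
\]
where $q:=\hl-\ell$, $\b=b_1\cdots b_q=j_{\ell+1}\cdots j_{\hl}$ is fixed, each $a_t\in\Gamma_{b_t}$ is free, and $\omega'\in\Sigma$ is arbitrary. Writing $\u=(i_1,j_1)\cdots(i_\ell,j_\ell)$ and $\bm v_{\a}=(a_1,b_1)\cdots(a_q,b_q)$, this gives
\[
Q_\ell(\omega)=\bigcup_{\a}\phi_{\u}\phi_{\bm v_{\a}}(K).
\]
Since the maps are diagonal, I compute the composition directly:
\[
\phi_{\u}\phi_{\bm v_{\a}}(x,y)=\bigl(P_n(i_1\cdots i_\ell)+n^{-\ell}P_n(\a)+n^{-\hl}x,\;P_m(j_1\cdots j_{\hl})+m^{-\hl}y\bigr).
\]
Putting $c:=(P_n(i_1\cdots i_\ell),P_m(j_1\cdots j_{\hl}))$, I get
\[
Q_\ell(\omega)-c=\diag(n^{-\ell},m^{-\hl})\,H(\b).
\]
The first coordinate of a point of $H(\b)$ is $P_n(\a)+n^{-q}x$, and scaling it by $n^{-\ell}$ gives $n^{-\ell}P_n(\a)+n^{-\hl}x$. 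This matches the formula above, and the second coordinate matches as well.

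Next I multiply by $n^\ell$. This gives $n^\ell(Q_\ell(\omega)-c)=\diag(1,n^\ell m^{-\hl})H(\b)$, and $n^\ell m^{-\hl}=m^{\ell\log_m n-\hl}=m^{s_\ell}$. The inequalities $m^{-\hl}\le n^{-\ell}<m^{-\hl+1}$ give $s_\ell\in(-1,0]$.

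It remains to handle the translation by an arbitrary $z\in Q_\ell(\omega)$ in place of $c$. I write
\[
n^\ell(Q_\ell(\omega)-z)=n^\ell(Q_\ell(\omega)-c)-n^\ell(z-c)=\diag(1,m^{s_\ell})\bigl(H(\b)+z'\bigr),
\]
where $z':=-\diag(1,m^{s_\ell})^{-1}n^\ell(z-c)$. Since $z\in Q_\ell(\omega)$, we have $n^\ell(z-c)\in\diag(1,m^{s_\ell})H(\b)$. Hence $\diag(1,m^{s_\ell})^{-1}n^\ell(z-c)\in H(\b)\subset[0,1]^2$, and so $z'\in[-1,0]^2$.

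There is no real obstacle. The only care needed is bookkeeping: verifying that the horizontal scale $n^{-\ell}\cdot n^{-q}=n^{-\hl}$ inside $H(\b)$ matches the composed contraction, and checking that $H(\b)\subset[0,1]^2$, which holds because $P_n(\a)+n^{-q}x\le 1$ for $x\in[0,1]$.
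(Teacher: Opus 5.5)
Your proposal is correct and follows essentially the same route as the paper. You write $Q_\ell(\omega)=c+\diag(n^{-\ell},m^{-\hl})H(\b)$, rescale by $n^\ell$, and take $z'=\bigl(n^\ell(c_1-x),\,m^{\hl}(c_2-y)\bigr)\in -H(\b)\subset[-1,0]^2$, which is exactly the paper's choice of $z'$ written in matrix form.
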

\begin{proof}
    Let $\ell\geq 1$. By definition, 
    \begin{align}
        Q_\ell(\omega) &= 
        \begin{pmatrix}
            P_n(i_1\cdots i_{\ell}) \\ P_m(j_1 \cdots j_{\hl})
        \end{pmatrix}
        + 
        \bigcup_{\substack{\i=i_{\ell+1}\cdots i_{\hl} \\ i_t\in\Gamma_{j_t}}}
        \left(
        \begin{pmatrix}
            n^{-\ell}P_n(\i) \\ 0
        \end{pmatrix}
        +
        \begin{pmatrix}
            n^{-\hl} & 0 \\ 0 & m^{-\hl}
        \end{pmatrix}K
        \right) \notag \\
        &= 
        \begin{pmatrix}
            P_n(i_1\cdots i_{\ell}) \\ P_m(j_1 \cdots j_{\hl})
        \end{pmatrix}
        +
        \begin{pmatrix}
            n^{-\ell} & 0 \\ 0 & m^{-\hl}
        \end{pmatrix}
        H(\b). \label{eq:Qlomega}
    \end{align}
    So for any $z=(x,y)\in Q_\ell(\omega)$,
    \[
        n^{\ell}(Q_\ell(\omega)-z) = 
        \begin{pmatrix}
            1 & 0 \\ 0 & n^{\ell}m^{-\hl}
        \end{pmatrix}
        H(\b) + n^{\ell}
        \left(
        \begin{pmatrix}
            P_n(i_1\cdots i_{\ell})-x \\ P_m(j_1\cdots j_{\hl})-y
        \end{pmatrix}
        \right).
    \]
    Since $m^{s_{\ell}}=n^{\ell}m^{-\hl}$, the desired identity holds if we take 
    \[
        z' = 
        \begin{pmatrix}
            n^{\ell}(P_n(i_1\cdots i_{\ell})-x) \\ m^{\hl}(P_m(j_1\cdots j_{\hl})-y)
        \end{pmatrix}.
    \]
    Since $z\in Q_\ell(\omega)$, it follows from~\eqref{eq:Qlomega} that $z'\in-H(\b)\subset[-1,0]^2$.
\end{proof}

\subsection{From symbolic density to a limiting cover}

We shall repeatedly choose density points with respect to cylinders and symbolic approximate squares. The following standard consequence of the martingale convergence theorem captures the desired density property.

\begin{lemma}\label{lem:density}
    Let $A\subset\Sigma$ be measurable. Then for $\nu$-almost every $\omega\in A$, 
    \[
        \lim_{\ell\to\infty} \frac{\nu(A\cap[\omega|_{\ell}])}{\nu([\omega|_{\ell}])} = 1 \quad\text{and}\quad \lim_{\ell\to\infty} \frac{\nu(A\cap C_\ell(\omega))}{\nu(C_\ell(\omega))} = 1.
    \]
\end{lemma}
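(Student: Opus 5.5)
We obtain both limits from the martingale convergence theorem applied to two increasing filtrations of $\Sigma$. For the first limit, let $\mathcal{F}_\ell$ be the finite $\sigma$-algebra generated by the cylinders $[\u]$ with $\u\in\Gamma^\ell$. For the second, let $\mathcal{G}_\ell$ be the finite $\sigma$-algebra whose atoms are the symbolic approximate squares $C_\ell(\omega)$. We first check that both families are increasing. For $\mathcal{F}_\ell$ this is clear. For $\mathcal{G}_\ell$ we use $\widehat{\ell+1}\geq\hl$: an atom of level $\ell+1$ fixes the first $\ell+1$ horizontal digits and the first $\widehat{\ell+1}$ vertical digits, so it lies inside an atom of level $\ell$. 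As noted before the lemma, both families generate the Borel $\sigma$-algebra $\mathcal{B}$ of $\Sigma$. For the cylinders this is standard. For the approximate squares, every cylinder $[\omega|_\ell]$ is a finite union of atoms $C_{\ell'}$ with $\ell'\geq\ell$, since these fix at least $\ell$ digits in both coordinates.

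Next we apply L\'evy's upward theorem. Since $\mathbbm{1}_A$ is bounded and $\mathcal{B}$-measurable, we have $\mathbb{E}[\mathbbm{1}_A\mid\mathcal{F}_\ell]\to\mathbbm{1}_A$ and $\mathbb{E}[\mathbbm{1}_A\mid\mathcal{G}_\ell]\to\mathbbm{1}_A$ for $\nu$-almost every point. Because the $\sigma$-algebras are finite, the conditional expectations are explicit at every $\omega$ with positive-measure atoms:
\[
    \mathbb{E}[\mathbbm{1}_A\mid\mathcal{F}_\ell](\omega)=\frac{\nu(A\cap[\omega|_\ell])}{\nu([\omega|_\ell])},\qquad \mathbb{E}[\mathbbm{1}_A\mid\mathcal{G}_\ell](\omega)=\frac{\nu(A\cap C_\ell(\omega))}{\nu(C_\ell(\omega))}.
\]
All the weights $p_{i,j}$ and $q_j$ with $j\in J$ are positive, so by the mass formula every atom has positive measure. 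Hence these identities hold for every $\omega$. Restricting to $\omega\in A$, where $\mathbbm{1}_A(\omega)=1$, and intersecting the two full-measure sets gives both limits simultaneously for $\nu$-a.e.\ $\omega\in A$.

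The only step that needs any care is the generation claim for the approximate squares, and it is immediate from the nesting described above. Everything else is the standard martingale convergence argument.
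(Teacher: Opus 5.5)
Your proposal is correct and follows essentially the same route as the paper's proof. Both apply the increasing martingale theorem to the cylinder filtration and to the approximate-square filtration, and both use the fact that every atom has positive measure to identify the conditional expectations with the density ratios. You additionally verify the nesting and generation properties via $\widehat{\ell+1}\geq\widehat{\ell}\geq\ell$, which the paper simply asserts before the lemma.
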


Here and later $\omega|_\ell$ denotes the prefix of $\omega$ of length $\ell$.

\begin{proof}
    For the second assertion, let $\mathcal{F}_\ell$ be the finite $\sigma$-algebra generated by the level-$\ell$ symbolic approximate squares. These $\sigma$-algebras increase with $\ell$ and generate the Borel $\sigma$-algebra of $\Sigma$. Since every approximate square has positive $\nu$-measure, 
    \[
        \mathbb{E}_\nu(\mathbbm{1}_A\mid\mathcal{F}_\ell)(\omega) = \frac{\nu(A\cap C_\ell(\omega))}{\nu(C_\ell(\omega))}, \quad \forall \omega\in\Sigma.
    \]
    The increasing martingale theorem therefore gives convergence to $\mathbbm{1}_A(\omega)$ for $\nu$-almost every $\omega$, which proves the second assertion. The first assertion follows in exactly the same way from the usual cylinder filtration.
\end{proof}

\begin{lemma}\label{lem:fullsptlimit}
    Let $\ell_k\to\infty$ and $\omega_k\in\Sigma$. Choose $z_k\in Q_{\ell_k}(\omega_k)$. Define $T_k(z)=n^{\ell_k}(z-z_k)$ for $z\in\R^2$, and define $\lambda_k:=T_k\Pi(\frac{\nu|_{C_{\ell_k}(\omega_k)}}{\nu(C_{\ell_k}(\omega_k))})$. If $T_k(Q_{\ell_k}(\omega_k))\to F$ in the Hausdorff metric and $\lambda_k$ converges weakly to $\lambda$ along a subsequence, then $\spt\lambda=F$.
\end{lemma}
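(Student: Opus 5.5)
We prove the two inclusions $\spt\lambda\subset F$ and $F\subset\spt\lambda$ separately. Pass to the subsequence along which $\lambda_k\to\lambda$.

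\emph{The inclusion $\spt\lambda\subset F$.} Each $\lambda_k$ is a probability measure supported on the compact set $T_k(Q_{\ell_k}(\omega_k))$, because $\Pi(C_{\ell_k}(\omega_k))=Q_{\ell_k}(\omega_k)$. So $\lambda_k(T_k(Q_{\ell_k}(\omega_k)))=1$ for every $k$. Lemma~\ref{lem:finite-cover-limit} with $p=1$, $F_{k,1}=T_k(Q_{\ell_k}(\omega_k))$ and $F_1=F$ then gives $\spt\lambda\subset F$.

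\emph{The inclusion $F\subset\spt\lambda$.} Fix $w\in F$ and $\rho>0$. It suffices to show that $\liminf_k\lambda_k(B(w,\rho))\geq c(\rho)>0$ for a constant $c(\rho)$ independent of $k$. Weak convergence then gives $\lambda(\overline{B(w,\rho)})\geq\limsup_k\lambda_k(\overline{B(w,\rho)})\geq c(\rho)$, so $w\in\spt\lambda$.

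To get this uniform bound, choose $w_k\in T_k(Q_{\ell_k}(\omega_k))$ with $w_k\to w$. Then $B(w_k,\rho/2)\subset B(w,\rho)$ for large $k$. Write $w_k=T_k(\Pi\tau_k)$ with $\tau_k\in C_{\ell_k}(\omega_k)$. Choose a fixed integer $N=N(\rho)$ such that $n^{-N}+m^{-N}\cdot m\,n^{\ell}m^{-\hl}\cdot$(const) is less than $\rho/2$ after rescaling. Now consider the sub-approximate-square
\[
D_k:=\{\sigma\in C_{\ell_k}(\omega_k): \sigma \text{ agrees with }\tau_k \text{ in the first } \ell_k+N \text{ horizontal digits and first } \hl_k+N \text{ vertical digits}\}.
\]
This is simply the symbolic set fixing $\ell_k+N$ full digits of $\tau_k$ and vertical digits up to $\hl_k+N$. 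Its image under $\Pi$ has horizontal width at most $n^{-\ell_k-N}$ and vertical height at most $m^{-\hl_k-N}\leq n^{-\ell_k}m^{-N}$. After applying $T_k$, it lies in a ball of radius $n^{-N}+m^{-N}<\rho/2$ around $w_k$, by choice of $N$. Hence $\lambda_k(B(w,\rho))\geq\nu(D_k)/\nu(C_{\ell_k}(\omega_k))$.

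\emph{Main step: bounding the ratio.} The ratio $\nu(D_k)/\nu(C_{\ell_k}(\omega_k))$ is a product of at most $N$ factors $p_{i,j}/q_j$, one for each extra fixed horizontal digit at positions $\ell_k+1,\dots,\ell_k+N$. It also contains at most $N$ factors of the form $q_j$, and, for the positions beyond $\hl_k$, factors $p_{i,j}$. To be careful, note that fixing horizontal digits at positions $\le\hl_k$ contributes $p_{i,j}/q_j$, while positions in $(\hl_k,\hl_k+N]$ contribute at least $\min p\cdot\min q$-type factors. In all cases the ratio is at least $(\min_{(i,j)}p_{i,j})^{2N}=:c(\rho)>0$, which uses the strict positivity of the weights. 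This bound is uniform in $k$, which completes the argument.

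The only delicate point is this bookkeeping: the ratio depends solely on a bounded number $N$ of new constraints, never on $k$. That is what makes the lower bound uniform and lets the full-support property pass to the limit.
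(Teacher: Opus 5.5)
Your proof is correct and follows essentially the same route as the paper. For $\spt\lambda\subset F$ you invoke Lemma~\ref{lem:finite-cover-limit} with $p=1$ where the paper argues directly with open balls. For the reverse inclusion, like the paper, you fix $N$ further horizontal digits after level $\ell_k$ and $N$ further vertical digits after level $\widehat{\ell_k}$, which gives a $k$-independent lower bound on the conditional mass. The only real difference is in the ratio estimate: the paper uses $\widehat{\ell_k}-\ell_k\geq N$ for large $k$ to keep the two blocks disjoint and compute the ratio exactly, whereas your cruder bound $(\min p_{i,j})^{2N}$ also covers the overlapping case. Your garbled sentence choosing $N$ should simply read $n^{-N}+m^{-N}<\rho/2$, which is what your later lines actually use.
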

\begin{proof}
    For simplicity, we abbreviate $C_k:=C_{\ell_k}(\omega_k)$ and $Q_k:=Q_{\ell_k}(\omega_k)$. For $x\in\spt\lambda$, if $x\notin F$ then there exists $\delta>0$ such that $B(x,2\delta)\cap F=\varnothing$. Then $B(x,\delta)\cap T_k(Q_{k}) = \varnothing$ for all large $k$ and hence $\lambda_k(B(x,\delta))=0$. It follows that
    \[
        \lambda(B(x,\delta)) \leq \liminf_{k\to\infty}\lambda_k(B(x,\delta)) = 0,
    \]
    contrary to $x\in\spt\lambda$. So $\spt\lambda\subset F$.

    For the reverse inclusion, fix $x\in F$ and $\rho>0$. Since $T_k(Q_k)\to F$ in the Hausdorff metric, we can find $x_k\in T_k(Q_k)$ with $x_k\to x$. Pick $\tau_k\in C_{k}$ such that $T_k(\Pi(\tau_k))=x_k$. Take a large integer $b\geq1$ with $\sqrt2\,m^{-b}<\rho/2$. Let $R_k\subset C_{k}$ consist of sequences which also agree with $\tau_k$ in the horizontal digits at levels $\ell_k+1,\ldots,\ell_k+b$ and in the vertical digits at levels $\widehat{\ell_k}+1,\ldots,\widehat{\ell_k}+b$. Since $\widehat{\ell_k}-\ell_k\geq b$ for all large $k$, we have 
    \begin{equation}\label{eq:disjointlevels}
        \{\ell_k+1,\ldots,\ell_k+b\} \cap \{\widehat{\ell_k}+1,\ldots,\widehat{\ell_k}+b\} = \varnothing.
    \end{equation}
    For $\tau\in R_k$, the first $\ell_k+b$ horizontal digits and the first $\widehat{\ell_k}+b$ vertical digits agree with those of $\tau_k$. So
    \[
        |\pi_1\Pi(\tau)-\pi_1\Pi(\tau_k)| \leq n^{-\ell_k-b} \quad\text{and}\quad |\pi_2\Pi(\tau)-\pi_2\Pi(\tau_k)| \leq m^{-\widehat{\ell_k}-b}.
    \]
    Then 
    \[
        n^{\ell_k}|\pi_1\Pi(\tau)-\pi_1\Pi(\tau_k)|\leq n^{-b}<\rho/2 \quad\text{and}\quad n^{\ell_k}|\pi_2\Pi(\tau)-\pi_2\Pi(\tau_k)| \leq m^{-b} < \rho/2,
    \]
    that is, $T_k(\Pi(R_k))\subset B(x_k,\sqrt{2}\rho/2)$. Since $x_k\to x$, $T_k(\Pi(R_k))\subset \overline{B(x,\rho)}$ for all large $k$.

    Recall that $q_j :=\sum_{i\in\Gamma_j}p_{i,j}$ is the total weight of the $j$-th row. Set
    \[
        c := \min\Big\{ \min_{(i,j)\in\Gamma}\frac{p_{i,j}}{q_j}, \min_{j\in J}q_j  \Big\}>0.
    \]
    Write $\tau_k=(i_{k,t},j_{k,t})_{t}$. By~\eqref{eq:disjointlevels}, we may multiply the conditional probabilities to obtain
    \[
        \frac{\nu(R_k)}{\nu(C_{k})} = \prod_{t={\ell_k+1}}^{\ell_k+b} \frac{p_{i_{k,t},j_{k,t}}}{q_{j_{k,t}}} \cdot \prod_{t={\widehat{\ell_k}+1}}^{\widehat{\ell_k}+b} q_{j_{k,t}} \geq \Big( \min_{(i,j)\in\Gamma}\frac{p_{i,j}}{q_j} \Big)^b\big( \min_{j\in J}q_j \big)^b \geq c^{2b},
    \]
    which is independent of $k$. So $\lambda_k(\overline{B(x,\rho)})\geq c^{2b}$ for all large $k$. It follows that
    \[
        \lambda\bigl( \overline{B(x,\rho)} \bigr) \geq \limsup_{k\to\infty}\lambda_k\bigl( \overline {B(x,\rho)} \bigr) \geq c^{2b}>0.
    \]
    Since $\rho>0$ was arbitrary, $x\in\spt\lambda$, which proves the reverse inclusion.
\end{proof}

Fix a similitude $f(z)=rOz+a$ with $r>0$. Pick a large integer $d\geq 1$ such that 
\begin{equation}\label{eq:kappa}
    \kappa := rn^d\geq \sqrt{2}.
\end{equation}
We keep $f$, $d$ and $\kappa$ fixed in Sections~\ref{sec:symbolic-pre} and~\ref{sec:nonobliqueness}. To cover an intersection inside an approximate square of level $\ell$, we use squares of level $\ell-d$. 
The next lemma is of the same type as~\cite[Lemma 7.3]{AH19}.

\begin{lemma}\label{lem:finite-covering}
    For every $\ell\geq d+1$ and $\omega\in\Sigma$, the set $Q_\ell(\omega)\cap f(K)$ is covered by at most $9m$ sets of the form $f(Q_{\ell-d}(\tau))$.
\end{lemma}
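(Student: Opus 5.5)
Pull back to the source: $Q_\ell(\omega)\cap f(K)=f\bigl(f^{-1}(Q_\ell(\omega))\cap K\bigr)$. So it suffices to cover $f^{-1}(Q_\ell(\omega))\cap K$ by at most $9m$ approximate squares $Q_{\ell-d}(\tau)$; applying $f$ then gives the claim.

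\emph{Size of the preimage.} By~\eqref{eq:Qlomega}, $Q_\ell(\omega)$ lies in a rectangle of width $n^{-\ell}$ and height $m^{-\hl}\le n^{-\ell}$. Its diameter is therefore at most $\sqrt2\,n^{-\ell}$. Since $f^{-1}$ scales distances by $r^{-1}$, the set $f^{-1}(Q_\ell(\omega))$ lies in a ball (equivalently, an axis-parallel square) whose diameter is at most
\[
\sqrt2\,n^{-\ell}r^{-1}=\sqrt2\,n^{-(\ell-d)}/\kappa\le n^{-(\ell-d)},
\]
using~\eqref{eq:kappa}. Hence this set lies in a closed axis-parallel square $S$ of side at most $n^{-(\ell-d)}$.

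\emph{Counting.} The level-$(\ell-d)$ approximate squares $Q_{\ell-d}(\tau)$, as $\tau$ ranges over $\Sigma$, cover $K$. Each is contained in a grid rectangle
\[
[P_n(i_1\cdots i_{\ell-d}),\,P_n(i_1\cdots i_{\ell-d})+n^{-(\ell-d)}]\times[P_m(j_1\cdots j_{\widehat{\ell-d}}),\,P_m(j_1\cdots j_{\widehat{\ell-d}})+m^{-\widehat{\ell-d}}],
\]
and distinct symbolic approximate squares give distinct grid rectangles. Write $L=\ell-d$. We bound the number of grid rectangles meeting $S$ in each direction.

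In the horizontal direction, an interval of length at most $n^{-L}$ meets at most $2$ consecutive $n$-adic intervals of length $n^{-L}$, or $3$ counting closed endpoints generously.

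In the vertical direction, an interval of length at most $n^{-L}<m^{-\widehat L+1}$ meets at most $m+2\le 3m$ of the $m$-adic intervals of length $m^{-\widehat L}$, again allowing closed endpoints. A crude count is enough here: the interval's length is at most $m\cdot m^{-\widehat L}$, so it meets at most $m+1$ such intervals, plus one more at an endpoint.

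Multiplying gives at most $3\cdot 3m=9m$ rectangles. Each point of $f^{-1}(Q_\ell(\omega))\cap K$ lies in some $Q_L(\tau)$, and that $Q_L(\tau)$ meets $S$. So the number of distinct $Q_L(\tau)$ needed is at most the number of grid rectangles meeting $S$, which is at most $9m$.

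The only delicate point is boundary ambiguity. Points of $K$ with two expansions can lie in several approximate squares, and the closed grid intervals overlap at their endpoints. The generous counts of $3$ in the horizontal direction and $m+2\le 3m$ in the vertical direction absorb this. No separation property is needed, because we only need a cover, not a partition.
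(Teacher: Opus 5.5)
Your proof is correct and follows the paper's route. You pull back to $f^{-1}(Q_\ell(\omega))\cap K$, use $\kappa\geq\sqrt2$ to place it in an axis-parallel square of side $n^{-(\ell-d)}$, and count the grid rectangles of level $\ell-d$ that meet this square. The only difference is the counting step: the paper compares areas inside a square of side $3n^{-(\ell-d)}$, each rectangle having area greater than $n^{-2(\ell-d)}/m$, whereas you count columns and rows separately, which even gives the slightly sharper bound $3(m+1)$.
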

\begin{proof}
    Since $f$ is a similitude, it suffices to show that $f^{-1}(Q_{\ell+d}(\omega))\cap K$ is covered by at most $9m$ sets of the form $Q_{\ell}(\tau)$ for all $\ell\geq 1$. By definition, $f^{-1}(Q_{\ell+d}(\omega))$ is contained in a (possibly rotated) rectangle with side lengths $r^{-1}n^{-(\ell+d)}$ and $r^{-1}m^{-(\widehat{\ell+d})}$. By~\eqref{eq:kappa}, 
    \[
        \max\big\{ r^{-1}n^{-(\ell+d)}, r^{-1}m^{-(\widehat{\ell+d})} \big\} \leq r^{-1}n^{-(\ell+d)} = n^{-\ell}/\kappa \leq n^{-\ell} /\sqrt{2}.
    \]
	Thus $f^{-1}(Q_{\ell+d}(\omega))$ is contained in an axis-parallel square $S_0$ with side length $n^{-\ell}$.
	
	For each $\tau=(i_t,j_t)_t\in \Sigma$, define $R_{\ell}(\tau)$ to be the axis-parallel rectangle
	\[
	    [P_n(i_1\cdots i_{\ell}), P_n(i_1\cdots i_{\ell})+n^{-\ell}] \times \big[P_m(j_1\cdots j_{\widehat{\ell}}), P_m(j_1\cdots j_{\widehat{\ell}})+m^{-\widehat{\ell}}\big].
	\]
	Then $R_{\ell}(\tau)$ contains $Q_{\ell}(\tau)$ and has side lengths $n^{-\ell}$ and $m^{-\widehat{\ell}}$. Note that for $\tau\not=\tau'$, either $R_{\ell}(\tau)=R_{\ell}(\tau')$ or their interiors are disjoint. 
	
	Choose $\tau_1,\ldots,\tau_N$ so that $R_\ell(\tau_1),\ldots,R_\ell(\tau_N)$ are pairwise distinct and 
    \[
        \{R_\ell(\tau_1),\ldots,R_\ell(\tau_N)\} = \{R_\ell(\tau):\tau\in \Sigma \text{ and } R_\ell(\tau)\cap S_0\neq\varnothing \}.
    \]
    Then $S_0\cap K\subset \bigcup_{t=1}^N Q_\ell(\tau_t)$. Since each $R_\ell(\tau_t)$ intersects $S_0$ and has both side lengths at most $n^{-\ell}$, $\bigcup_{t=1}^N R_\ell(\tau_t)$ is contained in an axis-parallel square with side length $3n^{-\ell}$. So
	\[
	  (3n^{-\ell})^2\geq \mathcal{L}^2\Big(\bigcup_{t=1}^N R_\ell(\tau_t)\Big)=\sum_{t=1}^N \mathcal{L}^2(R_{\ell}(\tau_t))= N n^{-\ell}m^{-\widehat{\ell}}> Nn^{-2\ell}/m
	\]
	and thus $N< 9m$.
\end{proof}

We now choose density approximate squares whose row words begin with increasingly long blocks from a prescribed row $j_0$. Their limit is the fixed product set $H(j_0^\infty)$, up to a diagonal transformation and a translation. We use this flexibility in the first two cases of the proof of Theorem~\ref{thm:nonoblique}.

\begin{proposition}\label{prop:prescribed-cover}
    Let $j_0\in J$. If $\mu(K\cap f(K))>0$, then there exist $1\leq p\leq 9m$, $\alpha_0,\alpha_1\in[m^{-1},1]$, $\eta_1,\ldots,\eta_p\in J^{\N}$, and $c_1,\ldots,c_p\in\R^2$ such that 
    \begin{equation*}
        D_0H(j_0^\infty) \subset \bigcup_{t=1}^p \bigl( \kappa OD_1H(\eta_t)+c_t \bigr),
    \end{equation*}
    where $D_i=\operatorname{diag}(1,\alpha_i)$ for $i=0,1$.
\end{proposition}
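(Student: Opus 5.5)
We pass to the symbolic side and pick a density point whose approximate squares start their row words with long runs of $j_0$. The intersection $K\cap f(K)$ will then fill almost all of these squares, and a finite-cover argument followed by a limit will give the inclusion.

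\textbf{Step 1: density points with prescribed row blocks.} Let $A:=\Pi^{-1}(K\cap f(K))\subset\Sigma$, so $\nu(A)>0$. By Lemma~\ref{lem:density}, $\nu$-a.e.\ $\omega\in A$ satisfies $\nu(A\cap C_\ell(\omega))/\nu(C_\ell(\omega))\to1$. For a fixed density point $\omega$ the row word $j_{\ell+1}\cdots j_{\hl}$ is not controlled, so we modify $\omega$ using cylinder density.

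Take $\omega$ satisfying both assertions of Lemma~\ref{lem:density}, and a large $L$. Choose a length $N$ with $\nu(A\cap[\omega|_N])/\nu([\omega|_N])>1-\varepsilon$. By self-similarity of $\nu$, the shifted set $A_N:=\{\tau:(\omega|_N)\tau\in A\}$ has measure $>1-\varepsilon$. Let $B_L\subset\Sigma$ be the set of sequences whose vertical digits at positions $\ell+1,\dots,\ell+L$ all equal $j_0$, for some fixed suitable $\ell$. Then $B_L$ has measure at least a fixed positive constant $q_{j_0}^L$.

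This suggests a cleaner route: rather than a single $\omega$, apply Lemma~\ref{lem:density} to pick, for each $k$, a level $\ell_k$ and a square $C_k=C_{\ell_k}(\tau_k)$ whose rows at levels $\ell_k+1,\dots,\ell_k+k$ equal $j_0$ and which satisfies $\nu(A\cap C_k)\geq(1-1/k)\nu(C_k)$.

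To find such squares, fix $k$ and argue by averaging. The squares of level $\ell$ whose first $k$ row digits after level $\ell$ equal $j_0$ have total mass $q_{j_0}^k\cdot(\text{mass})$ within any cylinder $[\omega|_\ell]$. By the first density assertion, $\nu(A^c\cap[\omega|_\ell])=o(\nu[\omega|_\ell])$. Hence the average relative density of $A$ over those sub-squares inside $[\omega|_\ell]$ tends to $1$ as $\ell\to\infty$, since $A^c$ can occupy at most a fraction $o(1)/q_{j_0}^k$ of them. So some such square has density $>1-1/k$. This averaging is the main technical point, and it only needs independence of digits at different levels under the Bernoulli measure.

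\textbf{Step 2: finite cover.} By Lemma~\ref{lem:finite-covering}, $Q_{\ell_k}(\tau_k)\cap f(K)\subset\bigcup_{t\le p_k}f(Q_{\ell_k-d}(\sigma_{k,t}))$ with $p_k\le 9m$. Pass to a subsequence with $p_k=p$ constant.

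Rescale by $T_k(z)=n^{\ell_k}(z-z_k)$. Lemma~\ref{lem:local-form} gives $T_kQ_{\ell_k}(\tau_k)=\operatorname{diag}(1,m^{s_{\ell_k}})(H(\b_k)+z'_k)$, where $\b_k$ begins with $j_0^k$.

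For the covering pieces, write $T_kf(Q_{\ell_k-d}(\sigma))=\kappa O\,n^{\ell_k-d}(Q_{\ell_k-d}(\sigma)-w)+c$, using $rn^{\ell_k}=\kappa n^{\ell_k-d}$ and a suitable $w$. Lemma~\ref{lem:local-form} again expresses this as $\kappa O\operatorname{diag}(1,m^{s_{\ell_k-d}})(H(\b_{k,t})+z'_{k,t})+c_{k,t}$.

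The translations $c_{k,t}$ are bounded, because each piece meets $T_kQ_{\ell_k}(\tau_k)$, which is bounded. We may discard pieces that do not meet it; if all pieces were discarded, $A\cap C_k$ would be empty, which contradicts its density.

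\textbf{Step 3: limits.} Pass to subsequences along which:
\begin{itemize}
\item $m^{s_{\ell_k}}\to\alpha_0$ and $m^{s_{\ell_k-d}}\to\alpha_1$, both in $[m^{-1},1]$;
\item $z'_k$, $z'_{k,t}$ and $c_{k,t}$ converge;
\item $\b_{k,t}\to\eta_t$ coordinatewise (by compactness of $J^{\N}$);
\item $\b_k\to j_0^\infty$ (automatic);
\item the normalized measures $\lambda_k$ converge weakly to $\lambda$.
\end{itemize}
Lemmas~\ref{lem:fibreconverge} and~\ref{lem:hd-continuity} give Hausdorff convergence of the rescaled square to $D_0(H(j_0^\infty)+z')$ and of each covering piece to $\kappa OD_1H(\eta_t)+c'_t$.

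Since $\lambda_k$ of the union of the pieces is at least $1-1/k$, Lemma~\ref{lem:finite-cover-limit} gives $\spt\lambda\subset\bigcup_t(\cdots)$. Lemma~\ref{lem:fullsptlimit} gives $\spt\lambda=D_0(H(j_0^\infty)+z')$. Absorbing the translate $D_0z'$ into the $c_t$ yields the claimed inclusion.
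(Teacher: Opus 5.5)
Your proof is correct and is essentially the paper's argument: you take a cylinder density point $\omega$ and average over the level-$\ell$ approximate squares inside $[\omega|_\ell]$ whose next $k$ row digits equal $j_0$ (total relative mass $q_{j_0}^k$) to extract one of relative density $>1-1/k$, then apply Lemma~\ref{lem:finite-covering}, the rescaling of Lemma~\ref{lem:local-form}, compactness, and Lemmas~\ref{lem:fibreconverge}, \ref{lem:hd-continuity}, \ref{lem:fullsptlimit} and~\ref{lem:finite-cover-limit}. You can delete the exploratory first half of Step~1, and you should state explicitly that $\ell_k\to\infty$ with $\ell_k>d$ and $\widehat{\ell_k}-\ell_k\geq k$; the last condition is what makes the prescribed-row set a disjoint union of level-$\ell_k$ approximate squares.
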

\begin{proof}
    Let $A:=\Pi^{-1}(K\cap f(K))$. So $\nu(A)>0$. Lemma~\ref{lem:density} gives us an ordinary cylinder density point $\omega^*\in A$. Choose a fast increasing sequence $\ell_N\to\infty$ such that $\ell_N>d$, $\widehat{\ell_N}-\ell_N\geq N$ and
    \[
        \nu([\omega^*|_{\ell_N}]\setminus A) < \frac{q_{j_0}^{\,N}}{N}\nu([\omega^*|_{\ell_N}]).
    \]
    Let $E_N\subset[\omega^*|_{\ell_N}]$ consist of sequences whose vertical digits at levels $\ell_N+1,\ldots,\ell_N+N$ are all $j_0$. By the product structure, we have 
    \[
        \nu(E_N) = q_{j_0}^{\,N}\nu([\omega^*|_{\ell_N}]) \quad\text{and}\quad \nu(E_N\setminus A) \leq \nu([\omega^*|_{\ell_N}]\setminus A) < \frac{1}{N}\nu(E_N).
    \]
    So the density of $A$ in $E_N$ is greater than $1-1/N$.

    Since $\widehat{\ell_N}\geq \ell_N+N$, the set $E_N$ is a finite disjoint union of level-$\ell_N$ symbolic approximate squares. Note that
    \[
        1-\frac{1}{N} < \ \frac{\nu(A\cap E_N)}{\nu(E_N)} = \sum_{\substack{C: \text{ a level-$\ell_N$ atom} \\ C\subset E_N}}
        \frac{\nu(C)}{\nu(E_N)}\cdot\frac{\nu(A\cap C)}{\nu(C)}.
    \]
    Therefore at least one atom, denoted $C_N$, satisfies
    \begin{equation}\label{eq:atom-density}
        \frac{\nu(A\cap C_N)}{\nu(C_N)} > 1-\frac{1}{N}.
    \end{equation}
    If $\b_N$ denotes the row word fixed by $C_N$ between levels $\ell_N+1$ and $\widehat{\ell_N}$, then $\b_N$ begins with $j_0^N$ and thus $\b_N\to j_0^\infty$ coordinatewise.

    Let $Q_N:=\Pi(C_N)$ and choose $x_N\in Q_N$. For $\ell\geq 1$, write $\widetilde{D}_\ell= \diag(1,m^{s_\ell})$, where $s_\ell := \ell\log_m n-\widehat{\ell}\in (-1,0]$. By Lemma~\ref{lem:local-form}, there are $z_N\in[-1,0]^2$ such that
    \[
        n^{\ell_N}(Q_N-x_N)=\widetilde D_{\ell_N}H(\b_N)+z_N.
    \]
    Passing to a common subsequence, we may assume that $\{\widetilde{D}_{\ell_N}\}$, $\{\widetilde{D}_{\ell_N-d}\}$ and $z_N$ are all convergent, say
    \[
        \widetilde{D}_{\ell_N}\to D_0,\, \widetilde{D}_{\ell_N-d}\to D_1,\, \text{ and } z_N\to z_0.
    \]
    By Lemmas~\ref{lem:fibreconverge} and~\ref{lem:hd-continuity},
    \begin{equation}\label{eq:pre-limit}
        n^{\ell_N}(Q_N-x_N) \to D_0H(j_0^\infty)+z_0.
    \end{equation}

    We next take limits on the cover side. By Lemma~\ref{lem:finite-covering}, we can choose at most $9m$ level-$(\ell_N-d)$ approximate squares such that their images under $f$ cover $Q_N\cap f(K)$, with each image meeting this intersection. After passing to a subsequence, the number of these squares is a constant $1\leq p\leq 9m$, say $E_{N,1},\ldots,E_{N,p}$. For each $t$, choose $y_{N,t}\in E_{N,t}$ with $f(y_{N,t})\in Q_N$. If $\b_{N,t}$ denotes the row word fixed by $E_{N,t}$ between levels $\ell_N-d+1$ and $\widehat{\ell_N-d}$, then by Lemma~\ref{lem:local-form} there exist $z_{N,t}\in[-1,0]^2$ such that
    \[
        n^{\ell_N-d}(E_{N,t}-y_{N,t}) = \widetilde{D}_{\ell_N-d}H(\b_{N,t})+z_{N,t}.
    \]
    Recalling~\eqref{eq:kappa}, we have
    \begin{align*}
        n^{\ell_N}(f(E_{N,t})-x_N) &= n^{\ell_N}(f(E_{N,t})-f(y_{N,t}))+n^{\ell_N}(f(y_{N,t})-x_N) \\
        &= \kappa O n^{\ell_N-d}(E_{N,t}-y_{N,t}) + n^{\ell_N}(f(y_{N,t})-x_N) \\
        &= \kappa O\widetilde{D}_{\ell_N-d}H(\b_{N,t}) + c_{N,t},
    \end{align*}
    where the two translations are merged into $c_{N,t} := \kappa Oz_{N,t}+n^{\ell_N}(f(y_{N,t})-x_N)$. 
    Both $f(y_{N,t})$ and $x_N$ lie in $Q_N$, whose diameter is at most $\sqrt2\,n^{-\ell_N}$. So $|c_{N,t}| \leq \sqrt{2}+\sqrt{2}\kappa$.

    The words $\b_{N,t}$ have lengths $\widehat{\ell_N-d}-(\ell_N-d)\to\infty$. Since $J$ is finite, a diagonal subsequence argument makes every fixed initial word of each $\b_{N,t}$ eventually constant. The translations are bounded, and there are only $p\leq 9m$ indices $t$. We may therefore take one further subsequence on which $\{\b_{N,t}\}$ and $\{c_{N,t}\}$ are both convergent, say $\b_{N,t}\to\eta_t\in J^{\N}$ coordinatewise and $c_{N,t}\to c'_t$ for each $1\leq t\leq p$. Then
    \[
        n^{\ell_N}(f(E_{N,t})-x_N) \to \kappa OD_1H(\eta_t)+c'_t, \quad \forall 1\leq t\leq p.
    \]
    Let $T_N(z):=n^{\ell_N}(z-x_N)$ and let $\lambda_N := T_N\Pi(\frac{\nu|_{C_N}}{\nu(C_N)})$. 
    By~\eqref{eq:pre-limit}, the supports of $\lambda_N$ are contained in a fixed compact set. Passing to a further subsequence, assume that $\lambda_N$ converges weakly to some $\lambda$. By Lemma~\ref{lem:fullsptlimit} and~\eqref{eq:pre-limit}, $\spt\lambda=D_0H(j_0^\infty)+z_0$.

    Since $Q_N\cap f(K) \subset \bigcup_{t=1}^p f(E_{N,t})$ and $Q_N=\Pi(C_N)$, 
    \[
      C_N \cap \Pi^{-1}\Big( \bigcup_{t=1}^p f(E_{N,t})\Big)\supset C_N\cap \Pi^{-1}(Q_N\cap f(K)) =  C_N\cap A.
    \]
    It then follows from~\eqref{eq:atom-density} that
    \[
        \lambda_N\Big( \bigcup_{t=1}^p T_N(f(E_{N,t})) \Big) = \frac{1}{\nu(C_N)}\nu\Big( C_N \cap \Pi^{-1}\Big( \bigcup_{t=1}^p f(E_{N,t}) \Big) \Big) \geq \frac{\nu(A\cap C_N)}{\nu(C_N)} \to 1.
    \]
    Moreover, we have seen that $T_N(f(E_{N,t}))\to\kappa OD_1H(\eta_t)+c'_t$ for all $1\leq t\leq p$. Lemma~\ref{lem:finite-cover-limit} therefore yields
    \[
        D_0H(j_0^\infty)+z_0 = \spt\lambda \subset \bigcup_{t=1}^p \bigl( \kappa OD_1H(\eta_t)+c'_t \bigr).
    \]
    Subtracting $z_0$ proves the proposition with $c_t=c'_t-z_0$.
\end{proof}

\section{Proof of Theorem~\ref{thm:nonoblique}}
\label{sec:nonobliqueness}

\subsection{Vacant rows and the case without full rows}

We first treat the cases in which there is a vacant row or every row is non-empty but none is full. For $q\geq2$ and a non-empty digit set $\D\subset[q]$,  the deleted-digit set with base $q$ and digits $\D$ is defined by
\begin{equation}\label{eq:deleted-digit}
    E(q,\D) := \Big\{ \sum_{t=1}^{\infty}d_tq^{-t}:d_t\in\D \Big\}.
\end{equation}
A point $x$ of a compact set $E\subset\R$ is \emph{left isolated} if
$(x-\varepsilon,x)\cap E=\varnothing$ for some $\varepsilon>0$;
right isolation is defined similarly.

\begin{lemma}\label{lem:isolated-points}
    Let $q\geq 3$ and let $\D\subset[q]$ satisfy $2\leq\#\D<q$. If a contracting similitude $h$ sends $E(q,\D)$ into itself, then its image contains both a left isolated point and a right isolated point of $E(q,\D)$.
\end{lemma}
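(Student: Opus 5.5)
The plan is to find explicit isolated points inside the image, using the self-similar structure of $E:=E(q,\D)$. Let $d_{\min}=\min\D$ and $d_{\max}=\max\D$, and write $h(x)=\rho x+b$ with $0<|\rho|<1$.

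\emph{Step 1: produce isolated points at every scale.} Since $2\leq\#\D<q$, there is a gap of the following kind: some $d\in\D$ and some $e\in[q]\setminus\D$ are adjacent digits. Alternatively, $0\notin\D$ or $q-1\notin\D$ (when $\D$ is an interval of digits not filling $[q]$).
\begin{itemize}
\item If $d+1\notin\D$ for some $d\in\D$ with $d<q-1$, take points $x=P_q(w\,d\,d_{\max}^\infty)$. Then $x$ is right isolated. The next point of $E$ to the right of $x$ must change the digit $d$ to some larger digit $\geq d+2$, or carry out of the prefix $w$. Either way the resulting point lies at distance bounded below by a constant multiple of $q^{-|w|-1}$. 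Here we need $d_{\max}<q-1$ or an appropriate gap estimate; this must be checked, since $d\,(q-1)^\infty$ can equal $d+1$.
\item Symmetrically, for left isolation take $y=P_q(w\,d'\,d_{\min}^\infty)$ with $d'-1\notin\D$.
\end{itemize}
I expect the cleaner route to be the following. The points $\min E=P_q(d_{\min}^\infty)$ and $\max E$ are respectively left and right isolated. Every cylinder image $S_w(E)$, with $S_w(x)=q^{-|w|}x+P_q(w)$, has left endpoint $S_w(\min E)$. This endpoint is left isolated in $E$ provided the cylinder $S_w(E)$ is separated from the preceding one. Since $\#\D<q$, there is some adjacent pair of cylinders with a gap between them: either a missing digit in the middle, or a missing $0$ or $q-1$ which creates a gap between consecutive first-level blocks after concatenation. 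By choosing the words $w$ so that their last digit sits right after such a gap, we obtain a dense-in-$E$ family of left isolated points of the form $S_w(\min E)$. Analogously, we obtain a dense family of right isolated points $S_{w'}(\max E)$.

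\emph{Step 2: transport into $h(E)$.} The set $h(E)$ is itself a copy of $E$ scaled by $|\rho|$, possibly reflected. The image $h(\min E)$ and $h(\max E)$ are the endpoints of $h(E)$. Since $h(E)\subset E$, the left endpoint $u:=\min h(E)$ belongs to $E$.

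The key claim is that $u$ is left isolated in $E$. Suppose instead that points of $E$ accumulate at $u$ from the left. I would use a Baire/self-similarity argument: take a small cylinder $S_w(E)\ni u$. Zooming by $S_w^{-1}$ gives a similitude $S_w^{-1}h$ mapping $E$ into $E$ with ratio possibly $>1$. This seems circular, so instead I rely directly on Step 1.

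Since $h(E)$ contains a translated, scaled copy of every cylinder structure, it contains points $h(S_w(\min E))$ that are left isolated in $h(E)$. These have the form $P_q(\cdot)$ with controlled expansions only when $\rho$ relates to $q$, which we do not know.

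So the main obstacle is showing that isolation relative to $h(E)$ implies isolation relative to $E$ without arithmetic information on $\rho$. My first attempt would argue by contradiction. If no point of $h(E)$ is left isolated in $E$, then by Step 1's density, each gap of $h(E)$ is filled by $E$ up to its right endpoint. Comparing gap counts at scale $\varepsilon$ (the number of gaps of length $\geq c\varepsilon$ in an interval is $\asymp\varepsilon^{-\dim}$, with $\dim=\log\#\D/\log q<1$) should yield a contradiction with $E$ having Lebesgue measure zero and uniformly porous structure.
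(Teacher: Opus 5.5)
Your proposal leaves the lemma unproved. The only nontrivial point is that some point of $h(E)$ is one-sided isolated \emph{in $E$}, not merely in $h(E)$. That is exactly the step you set aside as ``the main obstacle''.

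Step~1 is fine, and in fact immediate: $E$ is perfect and Lebesgue-null, so the endpoints of its gaps are dense in $E$. But density of isolated points of $E$ says nothing about whether any of them lie in the compact subset $h(E)$.

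Your ``key claim'' that $\min h(E)$ is left isolated in $E$ is false in general. Take $q=4$, $\D=\{0,1,3\}$ and $h(x)=(x+1)/4$. Then $h(E)\subset E$ and $\min h(E)=1/4=\max(E/4)$. But $E$ approaches $1/4$ from the left through the points $P_4(0\,3^k0^\infty)=\frac14(1-4^{-k})$.

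The closing gap-counting heuristic does not fill the gap either. Both $E$ and $h(E)$ have of order $\varepsilon^{-s}$ gaps of length at least $\varepsilon$. The contrary hypothesis only says that each gap $(a,b)$ of $h(E)$ contains extra points of $E$ accumulating at $b$. These create extra $E$-gaps of uncontrolled size, which is consistent with such counts, with porosity and with $\mathcal{L}^1(E)=0$. So no contradiction emerges.

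What is missing is a rigidity input that soft self-similarity cannot supply without information on the ratio. The paper obtains it by viewing $E\times[0,1]$ as the carpet $K(q,2,\D\times\{0,1\})$, all of whose horizontal slices equal $E$, with $0<\h^{\alpha}(E)<\infty$. It then invokes \cite[Corollary 3.4]{Xiao26}. This gives a left and a right isolated point of $E$ whose $h$-images are again one-sided isolated in $E$, with sides swapped if $h$ reverses orientation.

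You could instead argue with tools already in the paper:
\begin{enumerate}
\item By Lemma~\ref{lem:FW09}, and since $h^k(E)\subset h(E)$, you may pass to an iterate $g(x)=q^{-N}x+b$.
\item Then $E+q^Nb\subset q^NE\subset E+\Z$.
\item After normalizing $\min\D=0$, the first part of the proof of Lemma~\ref{lem:deleted-digit-translation} forces $q^Nb$ to be a $q$-adic rational.
\item Hence $g(E)$ is a finite union of sets $q^{-L}(E+k)\subset E$ with $k\in\Z$. Testing a point of $E$ with a unique expansion shows each such set is a cylinder $S_w(E)$.
\item Each such cylinder contains both endpoints of the gap $S_w(G)$, where $G$ is a first-level gap of $E$. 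Such a $G$ exists because $\#\D<q$. These endpoints are a right and a left isolated point of $E$, which is the required conclusion.
\end{enumerate}
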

\begin{proof}
    Consider the Bedford--McMullen carpet
    \[
        K' = K(q,2,\D\times\{0,1\}) = E(q,\D)\times[0,1].
    \]
    Every horizontal slice of $K'$ equals $E(q,\D)$. Moreover, writing $\alpha:=\frac{\log\#\D}{\log q}$, we have $0<\h^\alpha(E(q,\D))<\infty$. Hence~\cite[Corollary 3.4]{Xiao26} applies to the self-embedding $h(E(q,\D))\subset E(q,\D)$. Then we obtain a left and a right isolated point of $E(q,\D)$ whose images under $h$ are also one-sided isolated points (with left and right interchanged if $h$ is orientation-reversing), as desired.
\end{proof}

\begin{proposition}\label{prop:first-cases}
    Theorem~\ref{thm:nonoblique} holds if $J\neq[m]$ or if $J=[m]$ and $J_*=\varnothing$.
\end{proposition}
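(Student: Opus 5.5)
We argue by contradiction: assume $O$ is oblique and $\mu(K\cap f(K))>0$. We apply Proposition~\ref{prop:prescribed-cover} with a suitably chosen row $j_0\in J$. This gives
\[
    D_0H(j_0^\infty)\subset\bigcup_{t=1}^p\bigl(\kappa OD_1H(\eta_t)+c_t\bigr).
\]
Here $H(\eta)=C(\eta)\times\pi_2(K)$, and the matrices $D_0,D_1$ are diagonal with entries bounded away from $0$. The plan is to contradict this inclusion separately in the two cases.

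\emph{Case $J=[m]$ and $J_*=\varnothing$.} Every row is non-empty, so $\pi_2(K)=[0,1]$. Since no row is full, every $\Gamma_j$ satisfies $\#\Gamma_j\le n-1$. Hence for every $\eta\in J^{\N}$ the set $C(\eta)$ is covered at level $q$ by $(n-1)^q$ intervals of length $n^{-q}$, so $\mathcal L^1(C(\eta))=0$. Take any $j_0$. The left side contains the vertical segment $\{x_0\}\times[0,\alpha_0]$ for any $x_0\in C(j_0^\infty)$.

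Each set on the right is the image of $C(\eta_t)\times[0,1]$ under the map $\kappa OD_1$ followed by a translation. It is therefore a union of parallel segments in the direction $\kappa OD_1e_2$, which is not vertical because $O$ is oblique. The base points of these segments run over a Lebesgue-null set of lines. A vertical line meets each such segment in at most one point. The parameter of that intersection point depends affinely on the base point $x\in C(\eta_t)$, with nonzero slope because the direction $O D_1 e_1$ is also not vertical. So the intersection of the vertical line with the $t$-th set is an affine image of a subset of $C(\eta_t)$, which is null. A finite union of null sets cannot contain a segment of positive length, which is the desired contradiction.

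\emph{Case $J\neq[m]$.} Now there is a vacant row, so $\pi_2(K)=E(m,J)$ is a proper deleted-digit set. The idea is to use a one-sided boundary obstruction via Lemma~\ref{lem:isolated-points} and \cite{Xiao26}. If $\#J=1$, then $\pi_2(K)$ is a point and $K$ is contained in a horizontal line. Then the positive-measure intersection forces $f(K)$ and $K$ to share a segment-like subset of positive measure along a common line, which forces $O$ to preserve the horizontal direction up to sign. This case should be handled directly.

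If $\#J\ge2$ (so $m\ge3$), choose $j_0$ so that $C(j_0^\infty)=E(n,\Gamma_{j_0})$ is a self-similar set. The left side then contains the product $E(n,\Gamma_{j_0})\times \alpha_0E(m,J)$.

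Next we slice this product. Pick an extremal point of $E(n,\Gamma_{j_0})$, for instance its minimum $x_0$, and look at the vertical slice $\{x_0\}\times\alpha_0E(m,J)$. By compactness and Baire category, some piece of this slice is covered by a single transformed set $\kappa OD_1H(\eta_t)+c_t$. Because $O$ is oblique, this covering set meets a vertical line in an affine image of a subset of $C(\eta_t)$, and equally in an affine image of a subset of $E(m,J)$, along its fibres. Following the argument of \cite[Corollary 3.4]{Xiao26}, we should get that a contracting similar copy of $E(m,J)$, namely a small piece of the slice, embeds into $E(m,J)$ in a way incompatible with the one-sided isolated points guaranteed by Lemma~\ref{lem:isolated-points}. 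The incompatibility comes from the oblique direction: it makes points on one side of the image accumulate from both sides.

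This second case is the main obstacle. Turning the product inclusion into a genuine self-embedding of a deleted-digit set requires care. The most delicate step is choosing $j_0$ and the slice so that the oblique images force points of the source set to be two-sided limit points, contradicting the left and right isolation.
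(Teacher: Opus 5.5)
Your treatment of the case $J=[m]$, $J_*=\varnothing$ is correct. Showing that each covering set meets a vertical line in a Lebesgue-null set works just as well as the paper's Baire argument. (The nonzero slope really comes from $o_{12}\neq0$ together with $\det O\neq0$, not from $OD_1e_1$ being non-vertical.) The case $\#J=1$ is indeed trivial: $K$ lies on a horizontal line, $f(K)$ lies on an oblique one, and $\mu$ is non-atomic.

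The gap is the case $2\le\#J<m$, which you leave at ``we should get''. Moreover, the reduction you propose there cannot work. Write $F=E(m,J)$. Pulling back a piece of the single vertical slice $\{x_0\}\times\alpha_0F$ under $z\mapsto\kappa OD_1z+c_t$ only gives a subset of one oblique line inside $C(\eta_t)\times F$. Its two projections are merely similar copies of a piece of $F$ lying in $F$ and in $C(\eta_t)$. Neither is contradictory: $F$ has many contracting self-embeddings, and $C(\eta_t)$ may even be $[0,1]$, since full rows are allowed when $J\neq[m]$. Lemma~\ref{lem:isolated-points} only says such a self-embedding hits one-sided isolated points of $F$. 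On one slice nothing approaches those points from the isolated side, so obliqueness never gets used.

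The missing idea is to keep the horizontal direction. Set $E=E(n,\Gamma_{j_0})$ and apply Baire to the whole product $D_0(E\times F)$, not to a slice. This gives a cylinder $D_0\phi(E\times F)$ inside a single covering set $\kappa OD_1H(\eta_t)+c_t$, and hence an affine $g$ with $g(E\times F)\subset C(\eta_t)\times F$. Its second coordinate is $g_2(x,y)=ux+vy+b$. Obliqueness gives $u\neq0$ and $v\neq0$, while $\alpha_0\le1$, $\alpha_1\ge m^{-1}$ and $\kappa\ge\sqrt2$ give $|v|<1$. So each map $y\mapsto g_2(x,y)$ is a contracting self-embedding of $F$.

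Lemma~\ref{lem:isolated-points} then gives $y_0\in F$ such that $g_2(x_0,y_0)$ is right isolated if $u>0$ and left isolated if $u<0$, where $x_0=\min E$. Now take $x_q\in E$ with $x_q\downarrow x_0$. The points $g_2(x_q,y_0)=g_2(x_0,y_0)+u(x_q-x_0)$ lie in $F$ but fall into the gap on the isolated side, a contradiction.

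This argument needs $E$ to be infinite, i.e.\ $\#\Gamma_{j_0}\ge2$. That is why the paper chooses $j_0$ with $\#\Gamma_{j_0}$ maximal, a choice your proposal omits. The remaining subcase, where every non-empty row has exactly one digit, must be handled separately. There $E$ and all $C(\eta_t)$ are singletons while $F$ is infinite, so the target lies on a vertical line and each covering set lies on an oblique line meeting it at most once.
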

\begin{proof}
    Assume that $\mu(K\cap f(K))>0$ but $f$ is oblique. Choose $j_0\in J$ with $\#\Gamma_{j_0}$ maximal. Applying Proposition~\ref{prop:prescribed-cover} to $j_0$, we get 
    \begin{equation}\label{eq:finitecover}
        D_0H(j_0^\infty) \subset \bigcup_{t=1}^p G_t,
    \end{equation}
    where
    \[
        G_t := \kappa OD_1H(\eta_t)+c_t, \quad 1\leq t\leq p,
    \]
    and $p, D_0, D_1, \eta_t, c_t$ are as described there. Recall from~\ref{eq:Ceta-Heta} that $H(\eta_t)=C(\eta_t)\times\pi_2(K)$. 

    \medskip
    \noindent\emph{Case 1: $J=[m]$ and $J_*=\varnothing$.}
    For $\eta=(j_t)_t\in[m]^{\N}$, the set $C(\eta)$ is covered by the intervals
    \[
        [P_n(a_1\cdots a_N),P_n(a_1\cdots a_N)+n^{-N}], \quad a_t\in\Gamma_{j_t} \text{ for } 1\leq t\leq N.
    \]
    There are $\prod_{t=1}^N\#\Gamma_{j_t}$ such intervals. Since no row is full,
    \[
        \mathcal L^1(C(\eta))
        \leq n^{-N}\prod_{t=1}^N\#\Gamma_{j_t}
        \leq\left(\frac{n-1}{n}\right)^N\to0, \quad N\to\infty.
    \]
    In particular, $C(\eta)$ does not contain any nondegenerate interval. Thus no $G_t$ can contain a nondegenerate vertical segment. Indeed, applying the inverse of $z\mapsto\kappa OD_1z+c_t$ to such a segment would give an oblique segment in $C(\eta_t)\times\pi_2(K)$ because $O$ is oblique and $D_1$ is diagonal. Its projection under $\pi_1$ would therefore be an interval in $C(\eta_t)$, a contradiction.

    On the other hand, since $J=[m]$, we have $\pi_2(K)=[0,1]$ and $H(j_0^\infty)=C(j_0^\infty)\times [0,1]$. Choose an arbitrary $x_*\in C(j_0^\infty)$. Then the vertical segment
    \[
        S := D_0(\{x_*\}\times[0,1]) \subset D_0H(j_0^\infty)\subset \bigcup_{t=1}^p G_t.
    \]
    So the closed sets $S\cap G_t$ cover $S$. By Baire's theorem, some $G_t$ contains an interior part of $S$, which contradicts our previous observation.
    
    \medskip
    \noindent\emph{Case 2: $J\neq[m]$.}
    Write $E:=E(n,\Gamma_{j_0})$ and $F:=E(m,J)$. Then $H(j_0^\infty)=E\times F$ and $H(\eta_t)=C(\eta_t)\times F$ for $1\leq t\leq p$. So~\eqref{eq:finitecover} becomes 
    \begin{equation}\label{eq:finitecover2}
        D_0H(j_0^\infty) = D_0(E\times F) \subset \bigcup_{t=1}^p G_t.
    \end{equation}

    If $\#J=1$, then $F$ is a singleton. Since $\#\Gamma>1$, we have $\#\Gamma_{j_0}>1$ and $E$ is infinite. So $H(j_0^\infty)$ is an infinite subset of a horizontal line, whereas each $G_t$ lies on an oblique line which meets that horizontal line at most once. This contradicts~\eqref{eq:finitecover}. If $\#J\geq2$ but $\#\Gamma_{j_0}=1$, our choice of $j_0$ implies that every non-empty row has exactly one digit. Now $E$ and all $C(\eta_t)$ are singletons, while $F$ is infinite. Then the target lies on a vertical line, whereas each $G_t$ lies on an oblique line. The same argument leads to a contradiction.

    Suppose $2\leq\#J<m$ and $\#\Gamma_{j_0}\geq2$. Applying Baire's theorem to~\eqref{eq:finitecover2}, we see that some $G_t$ contains a non-empty relatively open subset of $D_0(E\times F)$. Since $D_0$ is a homeomorphism, there is a non-empty relatively open set $U\subset E\times F$ with $D_0(U)\subset G_t$.
    For a large $k\geq1$, there is a cylinder map $\phi$ (of the product IFS $E\times F$) with linear part $\operatorname{diag}(n^{-k},m^{-k})$ such that
    \[
        D_0\phi(E\times F) \subset G_t = \kappa OD_1(C(\eta_t)\times F)+c_t.
    \]
    Writing $g: z\mapsto \kappa^{-1}D_1^{-1}O^{-1}(D_0\phi(z)-c_t)$, we then have 
    \begin{equation}\label{eq:g-product-inclusion}
        g(E\times F)\subset C(\eta_t)\times F.
    \end{equation}

    Let $D_i:=\operatorname{diag}(1,\alpha_i)$ for $i=0,1$ and $O=(o_{ij})$. The second coordinate of $g$ is
    \[
        g_2(x,y)=ux+vy+b,
    \]
    where $u:=\frac{o_{12}}{\kappa \alpha_1n^k}$ and $v:=\frac{o_{22}\alpha_0}{\kappa \alpha_1m^k}$. Obliqueness gives $u,v\neq0$. Also, $\alpha_0\leq1$, $\alpha_1\geq m^{-1}$, and $k\geq1$, so
    \[
        0 < |v| \leq \kappa^{-1}m^{1-k} \leq \kappa^{-1} < 1.
    \]
    Then for each fixed $x\in E$, the map $y\mapsto g_2(x,y)$ is a contracting similitude sending $F$ into itself.

    Take $x_0:=\min E$. Since $2\leq\#J<m$, Lemma~\ref{lem:isolated-points} applies. Choose $y_0\in F$ such that $g_2(x_0,y_0)$ is right isolated in $F$ when $u>0$, and left isolated when $u<0$. There are points of $E$ converging to $x_0$ from the right, say $x_q\downarrow x_0$. 
    But
    \[
        g_2(x_q,y_0)=g_2(x_0,y_0)+u(x_q-x_0)
    \]
    approaches $g_2(x_0,y_0)$ from its isolated side. For all sufficiently large $q$, these points lie in the corresponding adjacent gap of $F$, contradicting~\eqref{eq:g-product-inclusion}.
\end{proof}

\subsection{Full rows and exceptional vertical coordinates}

It remains to consider carpets with every row non-empty and at least one full row. A horizontal fibre can now contain an interval, so the previous vertical-segment argument requires a suitable choice of scales. We first exclude certain heights via a measure estimate. Write $\T:=\R/\Z$.

\begin{lemma}\label{lem:discrete}
    For every non-empty open arc $I\subset\T$, there is $\ell\geq 1$ such that, for all $\theta\in\T$ and all $\alpha\in\R$ with $1\leq |\alpha|<n$, $\theta+i\alpha n^{-\ell}\in I\pmod 1$ for some $i\in\{0,\ldots,n^{\ell}-1\}$.
\end{lemma}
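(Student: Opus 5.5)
The idea is that the points $\theta+i\alpha n^{-\ell}$, $0\le i<n^\ell$, form an arithmetic progression on $\T$ with step $\alpha n^{-\ell}$. This progression has total length $n^\ell|\alpha|\cdot n^{-\ell}=|\alpha|\ge 1$, so it wraps around the whole circle at least once. Its consecutive gaps are at most $|\alpha| n^{-\ell}<n^{1-\ell}$. We therefore choose $\ell$ so large that $n^{1-\ell}$ is smaller than the length $|I|$ of the arc. The only care needed is uniformity in $\theta$ and $\alpha$, and the bound $|\alpha|<n$ gives exactly that.

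First reduce to $\alpha>0$. Replacing $\alpha$ by $-\alpha$ reverses the direction of the progression, and the argument below is symmetric. Concretely, if $\alpha<0$, apply the positive case to the arc $-I$ and the point $-\theta$, then negate. So assume $1\le\alpha<n$.

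Write $I$ as the image of an open interval $(a,a+\delta)\subset\R$ with $0<\delta\le1$. Fix $\ell\ge1$ with $n^{1-\ell}<\delta$; this $\ell$ depends only on $I$. Lift $\theta$ to a real $\tilde\theta\in[a-1,a)$. Consider the real points $x_i=\tilde\theta+i\alpha n^{-\ell}$ for $0\le i\le n^\ell-1$:
\begin{itemize}
\item they increase with step $h:=\alpha n^{-\ell}<n^{1-\ell}<\delta$;
\item the first point satisfies $x_0<a$;
\item the last point satisfies $x_{n^\ell-1}=\tilde\theta+\alpha-h\ge a-1+1-h=a-h>a-\delta$.
\end{itemize}

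We now find an index landing in $(a,a+\delta)$. If some $x_i$ already lies in $(a,a+\delta)$, we are done. If $x_{n^\ell-1}>a$, let $i_0$ be the least index with $x_{i_0}>a$; since $x_0<a$, we have $i_0\ge1$ and $x_{i_0-1}\le a$. Hence $x_{i_0}\le a+h<a+\delta$, so $x_{i_0}\in(a,a+\delta)$. Otherwise all the points lie at most at $a$, and the last one lies in $(a-\delta,a]$. In that case we use the shifted arc instead, by the same argument applied to the lift $\tilde\theta\in[a-1-\delta/2,\,a-\delta/2)$. Equivalently, we replace $a$ by $a-\delta/2$ and $\delta$ by $\delta/2$ at the start, i.e.\ we choose $\ell$ with $n^{1-\ell}<\delta/2$. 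The clean way is to first shrink $I$ to a subarc $(a',a'+\delta')$ and lift $\theta$ into $(a'-1,a']$. Then $x_{n^\ell-1}\ge a'-h+\ldots$ must be handled so as to exceed $a'$. Since $\alpha-h\ge1-h$ may fall slightly short of $1$, we absorb this by the margin $\delta'-h>0$: lift $\theta$ into $(a'+h-1,a'+h]$, so that $x_{n^\ell-1}>a'+h-1+1-h=a'$ while $x_0\le a'+h<a'+\delta'$. Then either $x_0\in(a',a'+\delta')$, or the first index with $x_i>a'$ lands in that interval by the step bound. Reducing modulo $1$ gives $\theta+i\alpha n^{-\ell}\in I$.

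The only delicate point is this endpoint bookkeeping. The progression covers length $\alpha-h$, which may be just under $1$ when $\alpha=1$. This is resolved by choosing the lift of $\theta$ with an offset of $h$ and requiring $h<\delta$. Uniformity holds because $h<n^{1-\ell}$ independently of $\theta$ and $\alpha$.
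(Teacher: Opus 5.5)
Your final argument is correct and is the kind of elementary discretization argument the paper omits. It lifts $\theta$ into $(a+h-1,a+h]$, so that $x_0\le a+h<a+\delta$ and $x_{n^\ell-1}>a-1+\alpha\ge a$, and then takes the first index with $x_i>a$; the choice of $\ell$ depends only on the length $\delta$ of $I$, which also justifies your reduction to $\alpha>0$ via $-I$. In a final write-up, keep only that version, with $a'=a$ and $\delta'=\delta$ (no shrinking is needed), and delete the first pass with $\tilde\theta\in[a-1,a)$, which leaves the case $x_{n^\ell-1}\le a$ unresolved, as well as the ``shifted arc'' sentence, where $a\mapsto a-\delta/2$, $\delta\mapsto\delta/2$ produces an arc disjoint from $I$.
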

\begin{proof}
    We omit the proof, which is an elementary discretization argument.
\end{proof}

The existence of a full row provides a useful uniform spreading property.

\begin{lemma}\label{lem:spreading}
    Assume that $J_*\neq\varnothing$. For every non-empty open arc $I\subset\T$, there is $c_I>0$ such that
    \begin{equation*}
        \mu\bigl\{(x,y)\in K: \alpha x+\beta y+\gamma\in I\pmod\Z \bigr\} \geq c_I
    \end{equation*}
    for all $\alpha,\beta,\gamma\in\R$ with $|\alpha|\geq1$.
\end{lemma}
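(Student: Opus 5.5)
We use the full row to make the horizontal coordinate uniformly equidistributed at a fixed deep level, and then apply Lemma~\ref{lem:discrete}. Fix $j_*\in J_*$ and the nonempty open arc $I$. Let $\ell=\ell(I)$ be the integer given by Lemma~\ref{lem:discrete}; it depends only on $I$. Consider cylinders of length $\ell$ whose vertical digits are all $j_*$.

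First we reduce to $1\le|\alpha|<n$. If $|\alpha|\ge n$, choose $k\ge 0$ with $n^k\le|\alpha|<n^{k+1}$ and condition on a cylinder of length $k$. For $\u\in\Gamma^k$ we have
\[
\phi_{\u}(x,y)=(n^{-k}x+a_{\u},\,m^{-k}y+b_{\u}),
\]
so the linear form becomes
\[
(\alpha n^{-k})x+(\beta m^{-k})y+\gamma',
\]
with $1\le|\alpha n^{-k}|<n$. Self-affinity gives
\[
\mu=\sum_{\u\in\Gamma^k}p_{\u}\,\phi_{\u}\mu .
\]
Hence the measure of the set in question is a convex combination, over $\u$, of the $\mu$-measures of sets of the same form with $\alpha$ replaced by $\alpha n^{-k}\in[1,n)$ in absolute value. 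It therefore suffices to prove the bound for $1\le|\alpha|<n$, with arbitrary $\beta,\gamma$.

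Now assume $1\le|\alpha|<n$. Decompose $\mu$ again by self-affinity, this time over words $\u\in\Gamma^\ell$ of the form $(i_1,j_*)\cdots(i_\ell,j_*)$ with arbitrary $i_t\in[n]$. This is allowed because row $j_*$ is full. For such $\u$, the map $\phi_{\u}$ has horizontal translation $P_n(i_1\cdots i_\ell)$, which ranges over $\{i n^{-\ell}:0\le i<n^\ell\}$. Its vertical translation $P_m(j_*^\ell)$ is the same for every such $\u$. For $(x,y)\in K$ we get
\[
\alpha\,\pi_1\phi_{\u}(x,y)+\beta\,\pi_2\phi_{\u}(x,y)+\gamma=\theta(x,y)+i\alpha n^{-\ell},
\]
where
\[
\theta(x,y):=\alpha n^{-\ell}x+\beta m^{-\ell}y+\beta P_m(j_*^\ell)+\gamma
\]
does not depend on $i$.

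Lemma~\ref{lem:discrete} says that for each $(x,y)$ some $i\in\{0,\ldots,n^\ell-1\}$ puts $\theta(x,y)+i\alpha n^{-\ell}$ in $I$. Write $B_i$ for the set of $(x,y)\in K$ for which $i$ works. The sets $B_i$ are Borel and cover $K$, so $\mu(B_i)\ge n^{-\ell}$ for some $i$. If $\u_i$ is the corresponding word, then
\[
\mu\{\cdots\}\ \ge\ p_{\u_i}\,\mu(B_i)\ \ge\ \bigl(\min_{(i,j)\in\Gamma}p_{i,j}\bigr)^{\ell}n^{-\ell}=:c_I ,
\]
and $c_I$ depends only on $I$. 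The reduction from the first step preserves this bound, since it only averages such estimates.

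The main point is the bookkeeping: $\ell$, and hence $c_I$, must not depend on $\beta$ or $\gamma$. This works because the translations coming from the full row shift only the $x$-coordinate, on an exact grid. The real content is Lemma~\ref{lem:discrete}, which is assumed here. If it were unavailable, one would prove it by noting that $\{i\alpha n^{-\ell}\}$ is a progression with step less than $1$ and at least $n^{\ell-1}$ terms. It is therefore $|\alpha|n^{-\ell}$-dense in $\T$, with step at most $n^{1-\ell}$, so it is dense enough once $n^{1-\ell}$ is smaller than the length of $I$.
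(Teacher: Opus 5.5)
Your proof is correct and follows the paper's argument: you use the same self-affinity reduction to $1\le|\alpha|<n$, and the same $n^{\ell}$ full-row words in row $j_*$ combined with Lemma~\ref{lem:discrete}. The only difference is cosmetic: you pigeonhole to a single word with $\mu(B_i)\ge n^{-\ell}$, which gives $c_I=\bigl(\min_{(i,j)\in\Gamma}p_{i,j}\bigr)^{\ell}n^{-\ell}$, whereas the paper sums over all such words via $\int_K\sum_{w}\mathbbm{1}_E(\phi_w(z))\rd\mu(z)\ge 1$ and obtains the slightly better constant $\min_{i\in[n]}p_{i,j_*}^{\ell}$.
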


The key point is that a full row supplies $n^\ell$ equally spaced horizontal translations for all $\ell\geq 1$, and Lemma~\ref{lem:discrete} forces one of them into $I$ uniformly in the tail.


\begin{proof}
    First suppose $1\leq|\alpha|<n$. Fix any $j_*\in J_*$. Let $\ell=\ell(I)$ be given by Lemma~\ref{lem:discrete} and let $c_I:=\min_{i\in[n]} p_{i,j_*}^\ell>0$.
    For simplicity, write $L(x,y) := \alpha x+\beta y+\gamma \pmod\Z$ and $E := \{z\in K: L(z)\in I\}$.

    Let $\mathcal{W}\subset\Gamma^\ell$ be the collection of words whose vertical digits are all $j_*$. Since $j_*\in J_*$, their horizontal words range over all of $[n]^\ell$. For $w=(i_1,j_*)\cdots(i_\ell,j_*)\in\mathcal{W}$, 
    \[
        \phi_w(x,y) = \Big( n^{-\ell}(x+k), m^{-\ell}y+j_*\sum_{t=1}^\ell m^{-t} \Big),
    \]
    where $k:=\sum_{t=1}^\ell i_tn^{\ell-t}$. As $w$ varies over $\mathcal{W}$, the integer $k$ ranges over $\{0,1,\ldots,n^\ell-1\}$. Thus for each fixed $z=(x,y)\in K$, the values $L\phi_w(z)$ have the form
    \[
        \theta(z) + \alpha k n^{-\ell}, \quad 0\leq k<n^\ell,
    \]
    where $\theta(z):=\alpha n^{-\ell}x+\beta(m^{-\ell}y+j_*\sum_{t=1}^\ell m^{-t})+\gamma$ is independent of $w$. By Lemma~\ref{lem:discrete}, at least one of these values lies in $I$ modulo $\Z$. Since $\phi_w(z)\in K$, we have
    \[
        \sum_{w\in\mathcal{W}} \mathbbm{1}_E(\phi_w(z)) \geq 1, \quad \forall z\in K.
    \]
    
    For every $w\in\mathcal{W}$, $\nu([w])\geq c_I$. By the self-affinity of $\mu$, we have 
    \[
      \mu(E) = \sum_{w\in\Gamma^\ell} \nu([w])\phi_w\mu(E) \geq \sum_{w\in\mathcal{W}} c_I\phi_w\mu(E).      
    \]
    Moreover,
    \[
       \sum_{w\in\mathcal{W}} \phi_w\mu(E)=\sum_{w\in\mathcal{W}} \int_K \mathbbm{1}_E(z) \rd\phi_w\mu(z) = \int_K \sum_{w\in\mathcal{W}} \mathbbm{1}_E(\phi_w(z)) \rd\mu(z) \geq 1.      
    \]
    So the desired estimate holds for $1\leq|\alpha|<n$, uniformly in $\beta$ and $\gamma$.

    When $|\alpha|\geq n$, choose an integer $q\geq1$ with $1\leq|\alpha|n^{-q}<n$. For every $u\in\Gamma^q$, the affine function $L\phi_u$ has first-coordinate coefficient $\alpha n^{-q}$. The case just proved then gives
    \[
        \mu(\phi_u^{-1}(E)) = \mu\{z\in K: L\phi_u(z)\in I\} \geq c_I.
    \]
    By the self-affinity of $\mu$,  
    \[
        \mu(E) = \sum_{u\in\Gamma^q} \nu([u])\mu(\phi_u^{-1}(E)) \geq c_I\sum_{u\in\Gamma^q}\nu([u]) = c_I,
    \]
    which finishes the proof.
\end{proof}

We apply the spreading estimate to heights whose addresses spend asymptotically all the time in full rows. Define
\begin{equation}\label{eq:sparse-set}
    \mathcal{Y} := \Big\{ P_m(\eta)\pmod\Z: \eta=(j_t)_t\in[m]^{\N}, \lim_{N\to\infty} \frac{1}{N} \#\{1\leq t\leq N:j_t\in J_*\}=1 \Big\}.
\end{equation}
If $y\in \mathcal{Y}$ then at least one of its vertical addresses has the frequency property. The set $\mathcal{Y}$ is Borel. To see this, let 
\[
    \mathcal{D}_m:=\{i/m^k: k\geq 0, i\in\mathbb{Z}, 0\leq i< m^k\}
\]
be the set of $m$-adic rationals in $[0,1)$ (equivalently, in $\T$). Outside the countable set $\mathcal{D}_m$, the base-$m$ address is unique and each of its digits is a Borel function of the height (piecewise constant). So the set of heights in $\T\setminus\mathcal{D}_m$ satisfying the frequency condition in~\eqref{eq:sparse-set} is Borel. The remaining part of $\mathcal{Y}$ is countable and hence also Borel.

The next proposition will be applied to $L=\pi_2f^{-1}\pmod\Z$. If $f$ is oblique, the horizontal component of this function is non-zero. This enables us to select a density point $x$ in the intersection such that $L(x)\notin\mathcal{Y}$.

\begin{proposition}\label{prop:sparse-null}
    Assume that $J_*\neq\varnothing$. Let $L(x,y)=ux+vy+b\pmod\Z$ with $u\neq 0$ and let $B:=\{\omega\in\Sigma: L(\Pi\omega)\in \mathcal{Y}\}$. Then $\nu(B)=0$. As a result, 
    \[
        \mu\{z\in K: L(z)\in \mathcal{Y}\}=0.
    \]
\end{proposition}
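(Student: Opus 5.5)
We prove $\nu(B)=0$ by showing that $B$ carries no mass inside most cylinders. The second assertion then follows from $\mu=\Pi\nu$. The idea is to combine the spreading estimate of Lemma~\ref{lem:spreading} with a Lebesgue-density-type argument for heights. The point is that $\mathcal{Y}$ consists of heights whose base-$m$ digits lie in $J_*$ with frequency one. If $J_*\neq[m]$, such heights avoid a fixed open arc at most scales. Concretely, pick $j'\in[m]\setminus J_*$ (this exists since $\#\Gamma<nm$), and let $I\subset\T$ be an open arc strictly inside the interior of $[j'/m,(j'+1)/m)$. Then $y\in\T\setminus\mathcal D_m$ has $m^{s}y \bmod 1\in I$ only if its $(s+1)$-th digit is $j'\notin J_*$. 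Hence for $y\in\mathcal{Y}$ (for the good address, handling $\mathcal D_m$ separately as a countable set) we have
\[
\frac1N\#\{0\le s<N: m^s y\in I\}\to 0 .
\]

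Next I would translate this into a statement on $\Sigma$. For $\omega\in\Sigma$ and each level $k$, write $\omega=\u\omega'$ with $\u=\omega|_k$. Then $m^sL(\Pi\omega)=m^sL(\phi_{\u}\Pi\omega')$ is an affine function of $\Pi\omega'$ with horizontal coefficient $m^su n^{-k}$. Choose $s=s(k)$ with $|m^su n^{-k}|\ge1$; for instance, $s(k)=\lceil k\log_m n-\log_m|u|\rceil$, which is increasing in $k$ with bounded gaps. Lemma~\ref{lem:spreading} then gives, uniformly in $\u$,
\[
\nu\{\omega\in[\u]: m^{s(k)}L(\Pi\omega)\in I\}\ge c_I\,\nu([\u]).
\]
So the events $A_k:=\{m^{s(k)}L(\Pi\omega)\in I\}$ satisfy $\nu(A_k\mid\mathcal F_k)\ge c_I$, where $\mathcal F_k$ is the cylinder $\sigma$-algebra of level $k$.

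The key step, which I expect to be the main obstacle, is to get positive frequency of the $A_k$ almost surely, despite the $A_k$ not being $\mathcal F_{k'}$-measurable for a fixed finite $k'$. I would approximate: since $I$ is open, shrink it to $I'$ with $\overline{I'}\subset I$ and apply the lemma to $I'$. Membership of $m^{s(k)}L(\Pi\omega)$ in $I'$ then implies membership in $I$ for all $\omega$ sharing the first $k+M$ digits, where $M$ is a constant depending only on the gap between $I'$ and $I$. Indeed, perturbing later digits moves the value by at most $(|u|n^{-k-M}+|v|m^{-k-M})m^{s(k)}$, which is small because $m^{s(k)}\asymp n^k$ and $m^{-k}$ decays. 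Thus we obtain $\mathcal F_{k+M}$-measurable events $A'_k\subset A_k$ (for instance, $A'_k$ the set of $\omega$ whose $(k+M)$-cylinder is contained in $A_k$) with $\nu(A'_k\mid\mathcal F_k)\ge c_{I'}$. Splitting $k$ into the $M$ residue classes mod $M$ and applying the martingale strong law (the Azuma/Lévy extension of Borel--Cantelli) to $\mathbbm 1_{A'_k}-\nu(A'_k\mid\mathcal F_k)$ yields
\[
\liminf_K \frac1K\#\{k\le K:\omega\in A'_k\}\ge c_{I'}>0 \quad\text{for $\nu$-a.e.\ }\omega.
\]

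Finally, since $s(k)$ increases by at most a bounded amount per step and is injective up to bounded multiplicity, this gives positive lower density of $\{s: m^sL(\Pi\omega)\in I\}$, contradicting membership in $\mathcal{Y}$ for a.e.\ $\omega$. It remains to handle $\omega$ with $L(\Pi\omega)\in\mathcal D_m$. Such $\omega$ form a null set: for each $y$, the line $\{L=y\}$ is non-horizontal since $u\neq0$, and Lemma~\ref{lem:spreading} with shrinking arcs shows that $\mu$ gives zero mass to each fibre $\{L=y\}$. Hence $\nu(B)=0$.
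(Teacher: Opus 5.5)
There is a genuine gap in the step you flagged as the main obstacle. Your reduction up to the conditional bound $\nu(A_k\mid\mathcal F_k)\ge c_I$ is correct, and it is the same use of Lemma~\ref{lem:spreading} as in the paper. The problem is that your perturbation estimate is not small. Two sequences sharing their first $k+M$ digits can have heights differing by order $m^{-k-M}$ as soon as $\#J\ge2$. So $m^{s(k)}L\circ\Pi$ can change by order $|v|m^{s(k)-k-M}\ge\frac{|v|}{|u|}\bigl(\frac nm\bigr)^k m^{-M}$, which tends to infinity because $n>m$: the decay of $m^{-k}$ is beaten by $m^{s(k)}\asymp n^k$.

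Hence for $v\neq0$ (the case needed later, with $L=\pi_2f^{-1}$) no fixed $M$ makes $A_k$ approximately $\mathcal F_{k+M}$-measurable. When $\pi_2(K)=[0,1]$, as in Proposition~\ref{prop:fullrow-case}, the image of every level-$(k+M)$ cylinder under $m^{s(k)}L\circ\Pi$ is even $mn^{-M}$-dense in $\T$ for large $k$, so your $A'_k$ is empty. To control $A_k$ you must fix vertical digits up to level about $s(k)\approx k\log_m n$, so the dependence windows grow linearly in $k$. A martingale or conditional Borel--Cantelli argument would then only apply along indices with $k_{i+1}\ge k_i\log_m n+M$. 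That set has density zero and says nothing about frequencies in $\mathcal Y$.

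The missing idea is that no almost-sure statement is needed: your conditional bound already suffices once you integrate, which is exactly the paper's argument.

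\begin{itemize}
\item Suppose $\nu(B)>0$. Use Lemma~\ref{lem:density} to pick a cylinder $[w]$ of level $k_0$ with $\nu(B\cap[w])>(1-c_I/2)\nu([w])$, and set $G_K:=\frac1K\sum_{k\le K}\mathbbm 1_{A_k}$.
\item Since $s$ is strictly increasing with $s(K)=O(K)$, the frequency property of $\mathcal Y$ gives $G_K\to0$ pointwise on $B$. Dominated convergence then yields $\limsup_K\int_{[w]}G_K\rd\nu\le\nu([w]\setminus B)<\frac{c_I}{2}\nu([w])$.
\item Summing your bound over the level-$k$ subcylinders of $[w]$ gives $\nu(A_k\cap[w])\ge c_I\nu([w])$ for all large $k$. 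Hence $\liminf_K\int_{[w]}G_K\rd\nu\ge c_I\nu([w])$, a contradiction.
\end{itemize}

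The paper writes this with the tail section $B_w$ and the functions $T_m^tL\phi_w$.

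Finally, your separate treatment of heights in $\mathcal D_m$ is unnecessary, and its justification does not work. The lower bounds of Lemma~\ref{lem:spreading} on arcs give no upper bound on the mass of a fibre. The treatment is unnecessary because, for any address $\eta=(j_t)$ witnessing $y\in\mathcal Y$, one has $m^ty\equiv P_m(\sigma^t\eta)\pmod\Z$. Since $\overline I\subset(j'/m,(j'+1)/m)$, this lies in $I$ only if $j_{t+1}=j'$, whether or not $y$ is $m$-adic.
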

\begin{proof}
	Suppose $\nu(B)>0$. Since $\#\Gamma<mn$, $J_*\neq[m]$. Choose $j_0\in[m]\setminus J_*$ and a non-empty open interval $I$ whose closure lies in $(j_0/m,(j_0+1)/m)$. Let $c_I$ be the constant from Lemma~\ref{lem:spreading}. By Lemma~\ref{lem:density}, there is a cylinder $[w]\subset\Sigma$ of level $k$ such that $\frac{\nu(B\cap[w])}{\nu([w])} > 1-\frac{c_I}{2}$. Letting $B_w :=\{\xi\in\Sigma: w\xi\in B\}$ be the tail section, we have $L(\Pi(w\xi))\in\mathcal{Y}$ whenever $\xi\in B_w$, and
    \[
        \nu(B_w) = \frac{\nu(B\cap[w])}{\nu([w])} > 1-\frac{c_I}{2}.
    \]
    Define $T_m(y) :=my\pmod\Z$ and 
    \[
        G_N(\xi) := \frac{1}{N}\sum_{t=0}^{N-1} \mathbbm{1}_I\bigl( T_m^tL(\Pi(w\xi)) \bigr), \quad \xi\in\Sigma.
    \]

    For any point $y\in \mathcal{Y}$, say $y=P_m(\eta)$ with $\eta=j_1j_2\cdots$ satisfying the frequency condition in~\eqref{eq:sparse-set}, since $T_m^ty = P_m(j_{t+1}j_{t+2}\cdots) \pmod\Z$, if $T_m^t(y)\in I$ then $j_{t+1}=j_0$. So
    \[
        \lim_{N\to\infty} \frac{1}{N}\sum_{t=0}^{N-1} \mathbbm{1}_I(T_m^t(y)) \leq \lim_{N\to\infty} \frac{1}{N}\#\{0\leq t\leq N-1: j_{t+1}=j_0\} = 0, \quad \forall y\in \mathcal{Y}.
    \]
    As a consequence, $G_N\to 0$ pointwise on $B_w$. Since $0\leq G_N\leq1$, the dominated convergence theorem gives $\int_{B_w} G_N(\xi) \rd\nu(\xi) \to 0$ and thus 
    \begin{equation}\label{eq:limsup-integralofgn}
        \limsup_{N\to\infty} \int_{\Sigma} G_N(\xi) \rd\nu(\xi) = \limsup_{N\to\infty} \int_{\Sigma\setminus B_w} G_N(\xi) \rd\nu(\xi) \leq \nu(\Sigma\setminus B_w) < c_I/2.
    \end{equation}

    On the other hand, 
    recall that $w$ has length $k$. The function $L\phi_w$ has the form
    \[
        L\phi_w(x,y) = un^{-k}x+vm^{-k}y+a_w \pmod\Z
    \]
    for some $a_w\in\R$. Since $u\neq0$, the horizontal coefficient of $T_m^tL\phi_w$ has absolute value $|um^tn^{-k}|\geq 1$ for all sufficiently large $t$. By Lemma~\ref{lem:spreading}, for every such $t$ we have
    \[
        \int_\Sigma \mathbbm{1}_I \bigl( T_m^tL\phi_w(\Pi\xi) \bigr)\rd\nu(\xi) = \nu\{\xi\in\Sigma: T_m^tL\phi_w(\Pi\xi)\in I\} \geq c_I.
    \]
    Since this holds for all large $t$ and $\Pi(w\xi)=\phi_w(\Pi\xi)$, we immediately get 
    \[
        \liminf_{N\to\infty} \int_{\Sigma} G_N(\xi) \rd\nu(\xi) \geq c_I,
    \]
    which contradicts~\eqref{eq:limsup-integralofgn}.
\end{proof}

We also want to exclude those $m$-adic boundary heights. Since their preimages under $L$ are segments,  the following observation (a special case of~\cite[Lemma 2.6]{EKM10}) allows us to discard all of them. We include a proof for completeness.

\begin{lemma}\label{lem:line-null}
    If $K$ is not contained in a line, then every line is $\mu$-null.
\end{lemma}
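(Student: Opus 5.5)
The plan is to show that a line $\ell$ of positive measure forces $K$ into a line, using self-affinity together with a maximality argument. Suppose, for contradiction, that some line $\ell$ satisfies $\mu(\ell)>0$, and set
\[
    s:=\sup\{\mu(\ell'):\ell' \text{ a line}\}>0.
\]

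The first step is to show that the supremum is attained. Lines meeting $[0,1]^2$ form a compact family, parametrized for instance by a unit normal and an offset. If $\mu(\ell_k)\to s$ and $\ell_k\to\ell_\infty$, then for every $\delta>0$ the closed neighbourhood $\mathcal N_\delta(\ell_\infty)$ contains $\ell_k\cap[0,1]^2$ for all large $k$. Hence $\mu(\mathcal N_\delta(\ell_\infty))\ge s$, and letting $\delta\downarrow0$ gives $\mu(\ell_\infty)\ge s$. So a maximizing line $\ell_\infty$ exists.

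The second step uses self-affinity. For every $k\ge1$ we have
\[
    \mu(\ell_\infty)=\sum_{w\in\Gamma^k}\nu([w])\,\mu(\phi_w^{-1}(\ell_\infty)).
\]
Each $\phi_w^{-1}(\ell_\infty)$ is again a line, since $\phi_w$ is an invertible affine map, so each term $\mu(\phi_w^{-1}(\ell_\infty))$ is at most $s$. The left side equals $s$ and the weights $\nu([w])$ sum to one, so every term must equal $s>0$. Consequently $\ell_\infty$ meets every cylinder image $\phi_w(K)$ in a set of positive measure; in particular $\ell_\infty\cap\phi_w(K)\neq\varnothing$ for every finite word $w$.

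The last step concludes. Every point of $K$ is a limit of cylinder sets $\phi_{\omega|_k}(K)$, whose diameters tend to zero. Each of these sets meets the closed set $\ell_\infty$, so every point of $K$ lies in $\ell_\infty$. Thus $K\subset\ell_\infty$, contradicting the hypothesis that $K$ is not contained in a line. The only mildly delicate point is the compactness and upper-semicontinuity argument in the first step. It uses only that $\mu$ is a finite measure supported in $[0,1]^2$ and that the neighbourhoods $\mathcal N_\delta(\ell_\infty)$ decrease to $\ell_\infty$; the rest is formal.
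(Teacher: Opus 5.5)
Your argument is correct, but it takes a different route from the paper's proof.

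The paper uses a density argument. It picks a cylinder density point $\omega\in\Pi^{-1}(S)$ via Lemma~\ref{lem:density}, notes that the pulled-back lines $\phi_{\omega|_k}^{-1}(S)$ have $\mu$-mass tending to $1$, and extracts a limit line $S_\infty$. This uses the same compactness of lines meeting $[0,1]^2$ and the same $\mathcal N_\delta$ upper-semicontinuity estimate as your first step. It then concludes $\mu(S_\infty)=1$, hence $K=\spt\mu\subset S_\infty$.

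You instead run an extremal argument. Compactness is used only to show that the supremum $s$ of line masses is attained; your maximizing lines have $\mu(\ell_k)>0$, so they meet $K\subset[0,1]^2$ and the compactness applies. The exact identity $\mu=\sum_{w\in\Gamma^k}\nu([w])\,\phi_w\mu$, together with $\nu([w])>0$, then forces every pullback $\phi_w^{-1}(\ell_\infty)$ to carry mass exactly $s>0$. So $\ell_\infty$ meets every cylinder $\phi_w(K)$, and shrinking cylinders put $K$ inside $\ell_\infty$.

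What each approach buys:
\begin{itemize}
\item Your version avoids the martingale density theorem and is entirely self-contained, at the cost of an extra maximization step.
\item The paper's version reuses a lemma it needs anyway and matches the blow-up philosophy used throughout.
\item Both use positivity of the weights essentially: you to force equality in the convex combination, the paper through $\spt\mu=K$.
\end{itemize}
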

\begin{proof}
    Assume on the contrary that there is a line $S$ with $\mu(S)>0$. Choose a cylinder density point $\omega\in \Pi^{-1}(S)$. Then 
    \[
        \mu(\phi_{\omega|_k}^{-1}(S)) = \nu(\{\eta\in\Sigma: \Pi(\omega|_k\eta)\in S\}) = \frac{\nu(\Pi^{-1}(S)\cap[\omega|_k])}{\nu([\omega|_k])}\to 1.
    \]
    The line $\phi_{\omega|_k}^{-1}(S)$ can be written as $\phi_{\omega|_k}^{-1}(S) = \{z: \langle e_k,z \rangle = a_k\}$, where $e_k\in S^1$ is a unit normal vector and $a_k\in\R$. 
    Choose $z_k\in\phi_{\omega|_k}^{-1}(S)\cap K$ for all large $k$. Since $K\subset [0,1]^2$, $|a_k|=|\langle e_k, z_k \rangle| \leq\sqrt{2}$. Passing to a subsequence, we may assume that $e_k\to e$ and $a_k\to a$ for some $e\in S^1$ and $a\in [-\sqrt{2}, \sqrt{2}]$. Let $S_\infty :=\{z:\langle e,z\rangle=a\}$. If $z\in \phi_{\omega|_k}^{-1}(S)\cap K$, then
    \begin{align*}
        \dist(z,S_\infty) &= |\langle e,z\rangle-a| \\
        &\leq |\langle e-e_k,z\rangle | + |\langle e_k, z \rangle - a| \\
        &\leq \sqrt{2}|e-e_k|+|a-a_k|.
    \end{align*}
    Thus, for every $\delta>0$, all sufficiently large $k$ satisfy $\phi_{\omega|_k}^{-1}(S)\cap K\subset \mathcal{N}_\delta(S_\infty)$. So
    \[
        \mu(\mathcal{N}_\delta(S_\infty)) \geq \mu(\phi_{\omega|_k}^{-1}(S)) \to 1, \quad k\to\infty.
    \]
    Letting $\delta\downarrow0$, we have $\mu(S_\infty)=1$. Since $S_\infty$ is closed, it contains the support $\spt\mu=K$, a contradiction.
\end{proof}

\subsection{Good scales for vertical addresses}

In the rest of this section, assume that $J=[m]$ and $J_* \neq [m]$. The previous results provide a source point whose height is non-$m$-adic and lies outside $\mathcal{Y}$. We then select scales such that every nearby point has a limiting horizontal fibre with no interval. The lemma below identifies a suitable limiting tail of the original address, and the subsequent proposition shows that this choice persists under small height perturbations.

Let $\sigma$ be the left shift on $[m]^{\N}$. For $\tau\in[m]^{\N}$, write 
\[
    \Xi(\tau) := \{\zeta: \sigma^{n_k}\tau\to\zeta \text{ for some }n_k\to\infty\}.
\]

\begin{lemma}\label{lem:orbit-limit}
    Let $\tau=i_1i_2\cdots\in[m]^{\N}$. If 
    \begin{equation}\label{eq:orbit-limsup-Lem}
        \limsup_{N\to\infty}\frac{1}{N} \#\{1\leq t\leq N:i_t\notin J_*\} > 0,
    \end{equation}
    then there exists $\zeta\in\Xi(\tau)$ with infinitely many coordinates outside $J_*$.
\end{lemma}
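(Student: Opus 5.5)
The plan is a compactness argument on $[m]^{\N}$ with the product topology, which is compact and metrizable. Suppose, for contradiction, that every $\zeta\in\Xi(\tau)$ has only finitely many coordinates outside $J_*$. We will upgrade this to a uniform statement and then show that it forces the density in \eqref{eq:orbit-limsup-Lem} to be zero.

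The first key step extracts a uniform block length. We claim there exist $M\ge1$ and $t_0$ such that for every $t\ge t_0$, the block $i_{t+1}\cdots i_{t+M}$ contains at most one coordinate outside $J_*$. In fact we aim for something slightly different. Let $W$ be the closed set of sequences having at least two coordinates outside $J_*$ among the first $M$ positions, and consider the closed set $Z_k:=\{\zeta: \zeta_s\notin J_*$ for at least $k$ indices $s\}$ restricted to $\Xi(\tau)$.

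A cleaner route is the following. Since $\Xi(\tau)$ is a closed, nonempty, shift-invariant set, every $\zeta\in\Xi(\tau)$ has a shift $\sigma^s\zeta\in J_*^{\N}$. For each $\zeta\in\Xi(\tau)$ and each $R$, the sequence $\sigma^s\zeta$ starts with a block of length $R$ from $J_*$. By compactness applied to the open cover $U_s:=\{\zeta:\zeta_{s+1},\dots,\zeta_{s+R}\in J_*\}$, there is $S=S(R)$ such that every $\zeta\in\Xi(\tau)$ contains a $J_*$-block of length $R$ starting within the first $S$ positions.

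Since $\sigma^t\tau$ eventually lies in any neighbourhood of $\Xi(\tau)$ (by compactness and the definition of $\Xi$), for all large $t$ the word $i_{t+1}\cdots i_{t+S+R}$ agrees with some element of $\Xi(\tau)$ on its first $S+R$ coordinates. Hence it contains a $J_*$-block of length $R$ starting within distance $S$.

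This gives only a lower density of $J_*$ digits, roughly $R/(S+R)$, which is not necessarily close to $1$, since $S$ depends on $R$. This is the main obstacle, and it must be fixed. The fix is to use minimality within the first hitting. Among all $\zeta\in\Xi(\tau)$, consider $k(\zeta):=\#\{s:\zeta_s\notin J_*\}<\infty$. Then $\Xi(\tau)\subset\bigcup_k\{k(\zeta)\le k\}$, and each of these sets is $F_\sigma$ rather than closed, so Baire does not directly bound $k$. Instead we argue directly on $\tau$.

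Fix $\varepsilon>0$ with $\limsup>2\varepsilon$. Choose $N_q\to\infty$ with at least $2\varepsilon N_q$ bad digits in $[1,N_q]$. By pigeonhole, find windows of length $L$, namely $[t,t+L)$ with $t\in[N_q/2,N_q]$, containing at least $\varepsilon L$ bad digits; this works for every $L$ once $q$ is large, by averaging. Now take $L_1<L_2<\cdots$. By a diagonal argument, for each $r$ pick $t_r\to\infty$ such that the windows $[t_r, t_r+L_r)$ each contain at least $\varepsilon L_r$ bad digits, and moreover, by averaging over sub-windows, every initial block of length $L_{r'}$ for $r'\le r$ also contains at least $\varepsilon L_{r'}/2$ bad digits. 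This can be arranged by choosing the start point inside a dense window where the density condition holds at all dyadic scales; this is a standard maximal-type (Vitali) selection.

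A limit point $\zeta$ of $\sigma^{t_r}\tau$ then has at least $\varepsilon L_{r'}/2$ bad digits among its first $L_{r'}$ coordinates for every $r'$. So $\zeta$ has infinitely many coordinates outside $J_*$, and $\zeta\in\Xi(\tau)$. The step that requires the most care is the maximal-type selection of $t_r$ with a uniform density at all scales. For this I would use the rising-sun/maximal lemma: in a window of density at least $\varepsilon$, a positive proportion of the starting points $t$ have density at least $\varepsilon/2$ on every initial segment $[t,t+\ell)$ with $\ell\le L$.
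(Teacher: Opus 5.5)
Your final argument (the last two paragraphs of the proposal) is correct, and it takes a genuinely different route from the paper.

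\textbf{Comparison with the paper.} The paper's proof is soft. It forms the empirical measures $\rho_k=\frac1{N_k}\sum_{t<N_k}\delta_{\sigma^t\tau}$ along times where the proportion of digits outside $J_*$ is at least $c$. It then passes to a weak limit $\rho$, which is $\sigma$-invariant with $\spt\rho\subset\Xi(\tau)$ and satisfies $\rho(U)\geq c$ for the clopen set $U$ of sequences whose first coordinate lies outside $J_*$. Finally, Poincar\'e recurrence gives $\zeta\in U\cap\spt\rho$ returning to $U$ infinitely often. You work directly on $\tau$ instead. A maximal (rising-sun) selection produces times $t_r\to\infty$ at which \emph{every} prefix of $\sigma^{t_r}\tau$ of length at most $L_r$ has density at least $\varepsilon/2$ of such digits, and any limit point inherits this property. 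Your route avoids invariant measures and recurrence. It also yields more: the resulting $\zeta$ has density at least $\varepsilon/2$ of coordinates outside $J_*$ on all of its prefixes, not just infinitely many such coordinates. The paper's route is shorter once standard ergodic theory is granted.

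\textbf{What to tighten.}

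First, the localisation $t\in[N_q/2,N_q]$ does not follow from having $2\varepsilon N_q$ bad digits in $[1,N_q]$, since they could all lie in the first half. You only need start points tending to infinity. For any threshold $T$, the window $\{T+1,\dots,N_q\}$ has density at least $\varepsilon$ once $N_q\geq T/\varepsilon$.

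Second, the maximal step is the heart of your proof and should be proved rather than cited. You only need one good start point, not a positive proportion, and that takes three lines:
\begin{itemize}
\item Let $a_s:=\mathbbm{1}[i_s\notin J_*]$. Let the window $\{t_0+1,\dots,t_0+L'\}$ contain at least $\varepsilon L'$ bad digits, with $L'\geq L$.
\item Put $S(u):=\sum_{s=t_0+1}^{u}(a_s-\varepsilon/2)$ for all $u\geq t_0$, and let $t^*$ minimise $S$ over $\{t_0,\dots,t_0+L'\}$.
\item For $t^*<u\leq t_0+L'$, minimality gives $S(u)\geq S(t^*)$. For $t_0+L'<u\leq t^*+L$, use three facts: every increment of $S$ is at least $-\varepsilon/2$; $u-t_0-L'\leq L\leq L'$; and $S(t_0+L')-S(t^*)\geq\varepsilon L'/2$, because $S(t^*)\leq S(t_0)=0$. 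Together these again give $S(u)\geq S(t^*)$.
\end{itemize}
Hence every prefix of $\sigma^{t^*}\tau$ of length $\ell\leq L$ contains at least $\varepsilon\ell/2$ digits outside $J_*$, and $t^*\geq t_0\to\infty$.

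Finally, the opening contradiction and compactness attempts are dead ends and can be deleted. Incidentally, the set $\{\zeta:k(\zeta)\le k\}$ there is closed, not merely $F_\sigma$.
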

\begin{proof}
    By the hypothesis, there are $c>0$ and $N_k\to\infty$ such that
    \begin{equation}\label{eq:orbit-limit}
        \frac1{N_k}\#\{1\leq t\leq N_k:i_t\notin J_*\}\geq c.
    \end{equation}
    Let $\rho_k := \frac{1}{N_k}\sum_{t=0}^{N_k-1}\delta_{\sigma^t\tau}$, where $\delta_{\omega}$ denotes the Dirac measure concentrated at $\omega$. Since $[m]^{\N}$ is compact, there is a subsequence of $\{\rho_k\}$ converging weakly to a probability measure $\rho$. It is easy to see that $\rho$ is $\sigma$-invariant. 

    We claim that $\spt\rho\subset\Xi(\tau)$. For any $\eta\in\spt\rho$, $\rho([\eta|_q])>0$ for every $q\geq 1$. Since $[\eta|_q]$ is clopen, 
    \[
        \lim_{k\to\infty} \rho_k([\eta|_q]) = \rho([\eta|_q]) > 0.
    \]
    Combining this with the definition of $\rho_k$, we see that $\{t: \sigma^t\tau\in[\eta|_q]\}$ is an infinite set. Then there is a strictly increasing sequence $t_q$ with $\sigma^{t_q}\tau\in[\eta|_q]$. These shifted addresses agree with $\eta$ in their first $q$ digits and hence converge to $\eta$. So $\eta\in\Xi(\tau)$.

    Consider $U:=\{\eta=j_1j_2\cdots\in[m]^{\N}:j_1\notin J_*\}$, which is also clopen. By~\eqref{eq:orbit-limit},
    \[
        \rho(U) = \lim_{k\to\infty} \rho_k(U) = \lim_{k\to\infty} \frac{1}{N_k}\#\{1\leq t\leq N_k: i_t\notin J_*\} \geq c>0.
    \]
    By Poincar\'e's recurrence theorem, we can find $\zeta\in U\cap\spt\rho\subset U\cap\Xi(\tau)$ which returns to $U$ infinitely often. Each return $\sigma^t\zeta\in U$ means that the $(t+1)$-th coordinate of $\zeta$ does not belong to $J_*$. This completes the proof.
\end{proof}

\begin{proposition}\label{prop:good-scales}
    Let $\tau=i_1i_2\cdots\in[m]^{\N}$ satisfy \eqref{eq:orbit-limsup-Lem} and $P_m(\tau)\pmod\Z\notin\mathcal{D}_m$. Then there are scales $\ell_k\to\infty$ such that, whenever $\eta_k\in[m]^{\N}$ satisfies
    \begin{equation}\label{eq:nearby-vertical-address}
        |P_m( \eta_k)-P_m(\tau)|\leq m^{-\widehat{\ell_k}+3},
    \end{equation}
    every limit word of $\sigma^{\ell_k}\eta_k$ has infinitely many coordinates outside $J_*$.
\end{proposition}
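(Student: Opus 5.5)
Our plan is to feed Lemma~\ref{lem:orbit-limit} into a uniqueness-of-address argument that rests on $P_m(\tau)\notin\mathcal{D}_m$. First, by Lemma~\ref{lem:orbit-limit}, we fix $\zeta\in\Xi(\tau)$ with infinitely many coordinates outside $J_*$, and a sequence $n_k\to\infty$ with $\sigma^{n_k}\tau\to\zeta$. Passing to a subsequence, we may assume $\sigma^{n_k}\tau$ agrees with $\zeta$ on its first $k$ coordinates. We then want scales $\ell_k$ with $\ell_k=n_k$. Since $\ell\mapsto\ell$ is onto, this is possible, and the real content lies in controlling $\eta_k$ on the block of levels $\ell_k+1,\ldots,\ell_k+k$ (and on the rest of the range up to $\widehat{\ell_k}$).

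The key step is to show that any $\eta_k$ satisfying \eqref{eq:nearby-vertical-address} agrees with $\tau$ on its first $\widehat{\ell_k}-C_k$ digits, where $C_k$ is a correction that stays small relative to $\widehat{\ell_k}-\ell_k$. Here we use that $y:=P_m(\tau)$ is not $m$-adic, so $\tau$ is its unique address. Also, the distance from $y$ to $m^{-s}\Z$, scaled by $m^{s}$, is positive for each $s$. Still, it can be arbitrarily small along some $s$, and this is the main obstacle. Closeness within $m^{-\widehat{\ell}+3}$ forces agreement of the first $s$ digits only when $\dist(y,m^{-s}\Z)>m^{-\widehat{\ell}+3}$.

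To handle this, define for each $\ell$ the largest $s(\ell)\le\widehat{\ell}$ such that $\dist(y,m^{-s}\Z)>m^{-\widehat{\ell}+3}$. By nesting of $m$-adic grids, every $\eta$ within $m^{-\widehat{\ell}+3}$ of $y$ then shares the first $s(\ell)$ digits with $\tau$. Because $y\notin\mathcal{D}_m$, for each fixed $s$ we have $\dist(y,m^{-s}\Z)>0$, so $s(\ell)\to\infty$. We need more, however: we need $s(\ell_k)\ge\ell_k+k$ for our chosen scales. So we should not pick $\ell_k=n_k$ blindly. Instead we choose, among the infinitely many candidate times, those with $\widehat{n}-s(n)$ not too large. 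Concretely, fix the target block length $k$ and let $s:=n_k+k$. Since $\delta_k:=\dist(y,m^{-s}\Z)>0$, any $\ell$ with $m^{-\widehat{\ell}+3}<\delta_k$ works for the first $s$ digits. But $\ell$ is tied to $n_k$.

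The remedy is to decouple the two. We use $\ell_k$ with $\widehat{\ell_k}$ very large while still ensuring $\sigma^{\ell_k}\tau$ starts with $\zeta|_k$. This requires recurrence: $\zeta|_k$ appears in $\tau$ at infinitely many positions $n$ (since $\zeta\in\Xi(\tau)$). We choose such an $n=\ell_k$ large enough that $m^{-\widehat{n}+3}<\dist(y,m^{-(n+k)}\Z)$. If this fails for all large such $n$, then $y$ is within $m^{-\widehat n+3}$ of $m^{-(n+k)}\Z$ for all those $n$. Since $\widehat n-(n+k)\to\infty$, this forces long runs of $0$'s or $(m-1)$'s in $\tau$ from position $n+k$ onward. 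We expect this to be refutable by selecting $n$ so that the digits of $\tau$ after $n+k$ are not eventually constant for the needed stretch. We also expect to use that $\tau$ itself is not eventually constant.

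Granting this, $\eta_k$ agrees with $\tau$ up to level $\ell_k+k$, so $\sigma^{\ell_k}\eta_k$ begins with $\zeta|_k$. Hence every limit word of $\sigma^{\ell_k}\eta_k$ equals $\zeta$, which has infinitely many coordinates outside $J_*$.
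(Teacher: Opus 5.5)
\noindent There is a genuine gap at the step you leave as an expectation. You need arbitrarily large occurrences $N$ of $\zeta|_k$ in $\tau$ with $m^{-\widehat N+3}<\dist(P_m(\tau),m^{-(N+k)}\Z)$, equivalently $\min\{P_m(\sigma^{N+k}\tau),1-P_m(\sigma^{N+k}\tau)\}>m^{N+k-\widehat N+3}$. For an arbitrary $\zeta$ of the kind Lemma~\ref{lem:orbit-limit} provides, this is false, and the fact that $\tau$ is not eventually constant does not help.

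Here is a counterexample. Take $m=2$, $J_*=\{1\}$ and $\tau=1\,0^{L_1}\,1\,0^{L_2}\,1\cdots$. Let $\tau_{s_j+1}$ be the $j$-th digit $1$, and choose $L_j\geq 2^{s_j}$. Both hypotheses on $\tau$ hold, and $\zeta=10^\infty\in\Xi(\tau)$ has infinitely many coordinates outside $J_*$. For $k\geq 2$, the occurrences of $\zeta|_k=10^{k-1}$ are exactly at $N=s_j$. Each is followed by $0^{L_j-k+1}$, which is far longer than $\widehat{s_j}-s_j=O(s_j)$.

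Worse, the conclusion itself fails at these scales. The word $\eta:=\tau|_{s_j}\,0\,1^\infty$ satisfies $|P_2(\eta)-P_2(\tau)|\leq 2^{-s_j-1-L_j}\leq 2^{-\widehat{s_j}+3}$ for large $j$. Yet $\sigma^{s_j}\eta=01^\infty$ has only one coordinate outside $J_*$. So insisting that $\sigma^{\ell_k}\eta_k$ converge to the given $\zeta$ cannot work in general.

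What is missing is the paper's case distinction on $\zeta$.

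\emph{Case: $\zeta$ is neither eventually $0$ nor eventually $m-1$.} Your plan does go through here, with $\ell_k=n_j$ for $j$ large depending on $k$. Indeed, $\min\{P_m(\sigma^{n_j+k}\tau),1-P_m(\sigma^{n_j+k}\tau)\}\to\min\{P_m(\sigma^k\zeta),1-P_m(\sigma^k\zeta)\}>0$, while $m^{n_j+k-\widehat{n_j}+3}\to 0$. This is the paper's Case~1, phrased via digit agreement rather than uniqueness of the coding of $P_m(\zeta)$.

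\emph{Case: $\zeta$ is eventually $0$.} Here you must change the target.
\begin{enumerate}
\item Since $\zeta$ has infinitely many coordinates outside $J_*$ and is eventually $0$, we get $0\notin J_*$; also $0^\infty\in\Xi(\tau)$.
\item Because $\tau$ is not eventually $0$, its zero runs are finite. Place the scale near the right end of a long zero run, so that $\sigma^{\ell}\tau$ begins with a short zero block followed by a non-zero digit.
\item This gives the endpoint clearance, and every limit word of $\sigma^{\ell_k}\eta_k$ becomes $0^\infty$.
\end{enumerate}
The paper does this with $\ell_k=n_k+L_k-R_k$ and $R_k=\lfloor\sqrt{L_k}\rfloor$. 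In your block formulation, letting the block $0^k$ end one digit before the end of a zero run of length at least $k+1$ also works: the tail is then $0d\cdots$ with $d\neq 0$, so the clearance is at least $m^{-2}$.

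The case where $\zeta$ is eventually $m-1$ is symmetric.
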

\begin{proof}
    By Lemma~\ref{lem:orbit-limit}, there are $n_k\to\infty$ and $\zeta\in[m]^{\N}$ such that $\sigma^{n_k}\tau\to\zeta$ and $\zeta$ has infinitely many coordinates outside $J_*$. 
    Note that
    \[
        P_m(\tau) = P_m(\tau|_\ell)+m^{-\ell}P_m(\sigma^\ell\tau), \quad \forall \ell\geq 1.
    \]
    Since $P_m(\tau)\notin\mathcal{D}_m$, it lies in the open interval $(P_m(\tau|_\ell), P_m(\tau|_\ell)+m^{-\ell})$ and
    its distances to the two endpoints are $m^{-\ell}P_m(\sigma^{\ell}\tau)$ and $m^{-\ell}(1-P_m(\sigma^\ell(\tau)))$. 
    Put $\varepsilon_\ell := m^{\ell-\widehat{\ell}+3}$, which tends to zero. So as long as 
    \begin{equation}\label{eq:endpoint-clearance}
        \varepsilon_\ell < \min\{P_m(\sigma^\ell\tau),1-P_m(\sigma^\ell\tau)\},
    \end{equation}
    we always have
    \begin{equation}\label{eq:close-y}
        \forall \eta\in[m]^{\N}, |P_m(\eta)-P_m(\tau)| \leq m^{-\widehat{\ell}+3} \Longrightarrow P_m(\eta)\in \big( P_m(\tau|_\ell), P_m(\tau|_\ell)+m^{-\ell} \big).
    \end{equation}
    Then any such $\eta$ agrees with $\tau$ in its first $\ell$ digits. So
    \begin{equation}\label{eq:tail-y}
        |P_m(\sigma^\ell\eta)-P_m(\sigma^\ell\tau)| = m^{\ell}|P_m(\eta)-P_m(\tau)| \leq m^{\ell-\widehat{\ell}+3} = \varepsilon_\ell.
    \end{equation}
    We will choose scales satisfying~\eqref{eq:endpoint-clearance}.

    \medskip
    \noindent\emph{Case 1: $\zeta$ is neither eventually $0$ nor eventually $m-1$.} In this case, $P_m(\zeta)\notin\mathcal{D}_m$ and hence $\zeta$ is the unique $m$-coding of $P_m(\zeta)$. 
    Take $\ell_k=n_k$. Since $\sigma^{n_k}\tau\to\zeta$,
    \[
        P_m(\sigma^{\ell_k}\tau) \to P_m(\zeta) \in (0,1).
    \]
    So for all large $k$, the values $P_m(\sigma^{\ell_k}\tau)$ stay away from $0$ and $1$ uniformly and thus~\eqref{eq:endpoint-clearance} holds for $\ell=\ell_k$ (recall that $\varepsilon_{\ell_k}\to 0$). Then for any $\{\eta_k\}\subset[m]^{\N}$ satisfying~\eqref{eq:nearby-vertical-address}, we have by~\eqref{eq:close-y} and~\eqref{eq:tail-y} that
    \[
        |P_m(\sigma^{\ell_k}\eta_k) - P_m(\zeta)| \leq |P_m(\sigma^{\ell_k}\eta_k) - P_m(\sigma^{\ell_k}\tau)| + |P_m(\sigma^{\ell_k}\tau)-P_m(\zeta)| \to 0,
    \]
    that is, $P_m(\sigma^{\ell_k}\eta_k) \to P_m(\zeta)$. Since $\zeta$ is the only coding of this point, $\{\sigma^{\ell_k}\eta_k\}$ has only one limit point $\zeta$, which of course has infinitely many coordinates outside $J_*$.

    \medskip
    \noindent\emph{Case 2: $\zeta$ is eventually $0$.}
    In this case, $0\notin J_*$ because $\zeta$ has infinitely many coordinates outside $J_*$. By adding a fixed integer to every $n_k$, we may assume that $\sigma^{n_k}\tau\to 0^\infty$. For all large $k$, let
    \[
        L_k := \max\{p\geq 1: i_{n_k+1}=\cdots=i_{n_k+p}=0\}.
    \]
    In other words, $L_k$ is the length of the initial zero block of $\sigma^{n_k}\tau$, which is a finite value because $\tau$ is not eventually zero (recall that $P_m(\tau)\notin\mathcal{D}_m$). Also, $L_k\to\infty$ since $\sigma^{n_k}\tau\to 0^\infty$. 
    
    Take
    \[
        R_k := \lfloor\sqrt{L_k}\rfloor \quad\text{and}\quad \ell_k = n_k+L_k-R_k.
    \]
    Then $R_k, \ell_k\to\infty$. Moreover, since $\widehat{\ell}-\ell=(\log_m n-1)\ell+O(1)$, $n>m$ and $\ell_k\geq R_k^2-R_k$, we have $\widehat{\ell_k}-\ell_k\geq R_k+5$ for all large $k$. So 
    \[
        \varepsilon_{\ell_k} = m^{\ell_k-\widehat{\ell_k}+3} \leq m^{-R_k-2}.
    \]
    Note that the first $R_k$ digits of $\sigma^{\ell_k}\tau$ are zero, and the next one is non-zero. So
    \begin{equation}\label{eq:tail-near-zero}
        m^{-R_k-1} \leq P_m(\sigma^{\ell_k}\tau) \leq m^{-R_k}.
    \end{equation}
    Then~\eqref{eq:endpoint-clearance} holds for $\ell=\ell_k$.
    Whenever $\eta_k\in[m]^{\N}$ satisfies~\eqref{eq:nearby-vertical-address}, we have by~\eqref{eq:tail-y} that 
    \[
        |P_m(\sigma^{\ell_k}\eta_k) - P_m(\sigma^{\ell_k}\tau)| \leq \varepsilon_{\ell_k} \leq m^{-R_k-2}.
    \]
    Combining this with~\eqref{eq:tail-near-zero},
    \[
        0 \leq P_m(\sigma^{\ell_k}\eta_k) \leq m^{-R_k}+m^{-R_k-2} \to 0.
    \]
    Then for any symbolic limit $\eta'$ of the sequence $\{\sigma^{\ell_k}\eta_k\}$, $P_m(\eta')=0$. So $\eta'=0^\infty$. Since $0\notin J_*$, all its digits lie outside $J_*$.

    \medskip
    \noindent\emph{Case 3: $\zeta$ is eventually $m-1$.}
    This can be resolved in the same manner as Case 2 and we omit the details.
\end{proof}

\subsection{Completion of the full-row case}

To prove the remaining case, we first choose a height to which Proposition~\ref{prop:good-scales} applies. At the resulting scales, all products in the limiting cover have horizontal components of zero length. The argument from Case 1 in Proposition~\ref{prop:first-cases} can therefore be applied again. 

\begin{proposition}\label{prop:fullrow-case}
    Theorem~\ref{thm:nonoblique} holds if $J=[m]$ and $J_*\neq\varnothing$.
\end{proposition}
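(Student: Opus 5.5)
Argue by contradiction: assume $\mu(K\cap f(K))>0$ with $f$ oblique, $J=[m]$ and $J_*\neq\varnothing$. Note $J_*\neq[m]$ because $\#\Gamma<mn$, and $K$ is not contained in a line. Put $A:=\Pi^{-1}(K\cap f(K))$ and $L:=\pi_2f^{-1}\pmod\Z$. Then $L(x,y)=ux+vy+b$, and obliqueness of $O^{-1}$ gives $u\neq0$.

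First I choose a good source point. By Proposition~\ref{prop:sparse-null}, $\nu\{\omega:L(\Pi\omega)\in\mathcal{Y}\}=0$. The set $L^{-1}(\mathcal{D}_m)$ is a countable union of lines, so it is $\mu$-null by Lemma~\ref{lem:line-null}. Using Lemma~\ref{lem:density}, I pick $\omega^*\in A$ that is a density point for the approximate squares $C_\ell(\omega^*)$, with $y^*:=L(\Pi\omega^*)\notin\mathcal{Y}\cup\mathcal{D}_m$. Let $\tau$ be the unique $m$-adic address of $y^*$. Since $y^*\notin\mathcal{Y}$, the frequency of digits of $\tau$ in $J_*$ does not tend to one, so $\tau$ satisfies \eqref{eq:orbit-limsup-Lem}. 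Proposition~\ref{prop:good-scales} then gives scales $\ell_k\to\infty$.

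Next I repeat the scheme of Proposition~\ref{prop:prescribed-cover}, now without prescribing the source row. Work on the target side at level $\ell_k+d$, so the density squares are $C_{\ell_k+d}(\omega^*)$. Via Lemma~\ref{lem:finite-covering}, cover $f^{-1}(Q_{\ell_k+d}(\omega^*))\cap K$ by at most $9m$ approximate squares $Q_{\ell_k}(\tau_{k,t})$, each meeting $f^{-1}(Q_{\ell_k+d}(\omega^*))$. The key check is that every point of $Q_{\ell_k}(\tau_{k,t})$ has height within $m^{-\widehat{\ell_k}+3}$ of $P_m(\tau)$, up to the integer shift absorbed mod $\Z$. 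This holds because $f^{-1}(Q_{\ell_k+d}(\omega^*))$ has diameter at most $n^{-\ell_k}$, which is comparable to $m^{-\widehat{\ell_k}}$, and the covering squares have height $m^{-\widehat{\ell_k}}$. I expect the constant $3$ to absorb these factors. So the vertical address $\eta_{k,t}$ of each $\tau_{k,t}$ satisfies \eqref{eq:nearby-vertical-address}, with a possible bounded index shift that I would handle by enlarging $d$. The row word between levels $\ell_k+1$ and $\widehat{\ell_k}$ is an initial block of $\sigma^{\ell_k}\eta_{k,t}$. Hence every coordinatewise limit $\eta_t$ has infinitely many coordinates outside $J_*$, and so $\mathcal{L}^1(C(\eta_t))=0$ by the estimate in Case 1 of Proposition~\ref{prop:first-cases}.

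Then I pass to the limit exactly as in Proposition~\ref{prop:prescribed-cover}, using Lemmas~\ref{lem:local-form}, \ref{lem:fibreconverge}, \ref{lem:hd-continuity}, \ref{lem:fullsptlimit} and~\ref{lem:finite-cover-limit}. This gives $D_0H(\eta_0)+z_0\subset\bigcup_t(\kappa OD_1H(\eta_t)+c_t)$ for some limiting source row word $\eta_0$, which is arbitrary. Since $J=[m]$, we have $\pi_2(K)=[0,1]$, so the left side contains a nondegenerate vertical segment. Each right-hand piece meets vertical lines only in null sets, because $C(\eta_t)$ contains no interval and $O$ is oblique. The Baire argument of Case 1 then gives the contradiction. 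The main obstacle is the height bookkeeping in the covering step: placing all covering squares' vertical addresses inside the window \eqref{eq:nearby-vertical-address} at the same scale $\ell_k$ chosen by Proposition~\ref{prop:good-scales}.
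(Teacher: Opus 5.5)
Your proposal is correct and follows the paper's proof essentially step by step. The shared steps are the exceptional sets $\mathcal{Y}$ and $L^{-1}(\mathcal{D}_m)$, the approximate-square density point, the scales from Proposition~\ref{prop:good-scales}, the cover at levels $\ell_k+d$ versus $\ell_k$ with the height bound $n^{-\ell_k}<m^{-\widehat{\ell_k}+1}$, and the vertical-segment contradiction. Two small remarks: no integer or index shift actually arises, because $f^{-1}(\Pi\omega^*)\in K$ has height exactly $P_m(\tau)$ and your bound already fits inside $m^{-\widehat{\ell_k}+3}$; and the paper spends a short paragraph checking that the conclusion of Proposition~\ref{prop:good-scales} survives passing to subsequences (padding the omitted indices with $\tau$), a point you use only implicitly.
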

\begin{proof}
    In this case, $\pi_2(K)=[0,1]$ and it is easy to see that $K$ is not contained in any line. Assume that $\mu(K\cap f(K))>0$ but $f$ is oblique. Recall that $f(z)=rOz+a$. Let $L:=\pi_2f^{-1}\pmod\Z$. Writing $O=(o_{ij})$ and $a=(a_1,a_2)$, we have
    \[
        L(x,y) = r^{-1}\bigl( o_{12}(x-a_1)+o_{22}(y-a_2) \bigr) \pmod\Z.
    \]
    Since $f$ is oblique, $o_{12}\neq0$. By Proposition~\ref{prop:sparse-null}, $\mu\{z\in K:L(z)\in \mathcal{Y}\} = 0$. 
    The set $\{z:L(z)\in\D_m\}$ is a countable union of lines and hence is also $\mu$-null by Lemma~\ref{lem:line-null}.

    Let $A:=\Pi^{-1}(K\cap f(K))$. Then $\nu(A)>0$. By Lemma~\ref{lem:density}, we can choose a word $\omega\in A$ which is an approximate-square density point, that is, 
    \begin{equation}\label{eq:density-final}
        \lim_{\ell\to\infty} \frac{\nu(A\cap C_\ell(\omega))}{\nu(C_\ell(\omega))} = 1.
    \end{equation}
    We may moreover choose $\omega$ so that $y_0:=\Pi(\omega)$ avoids both exceptional sets; that is, 
    \[
        L(\Pi(\omega))\notin\mathcal{D}_m \quad\text{and}\quad L(\Pi(\omega))\notin\mathcal{Y}.
    \]
    Put $x_0:=f^{-1}(y_0)\in K$. So $\pi_2(x_0)=L(y_0)=L(\Pi(\omega))$ is neither $m$-adic nor in $\mathcal{Y}$ modulo $\Z$. Let $\tau=i_1i_2\cdots\in[m]^{\N}$ be the unique base-$m$ expansion of $\pi_2(x_0)$, that is, $\pi_2(x_0)=P_m(\tau)$. Since $P_m(\tau)$ does not belong to $\mathcal{Y}$, \eqref{eq:orbit-limsup-Lem} holds.
    Proposition~\ref{prop:good-scales} therefore applies to $\tau$. Denote its selected levels by $\ell_k\to\infty$.

    Set $\ell'_k:=\ell_k+d$ (where $d$ comes from~\eqref{eq:kappa}) and $Q_k:=Q_{\ell'_k}(\omega)$. By Lemma~\ref{lem:finite-covering}, we can choose at most $9m$ level-$\ell_k$ approximate squares whose images under $f$ cover $Q_k\cap f(K)$, and each of these images meets $Q_k\cap f(K)$. Passing to a subsequence, assume that the number of these squares is a constant $p\leq 9m$, say $E_{k,1},\ldots,E_{k,p}$. So 
    \begin{equation}\label{eq:cover-qkfk}
        Q_k \cap f(K) \subset \bigcup_{t=1}^p f(E_{k,t})
    \end{equation}
    and each $f(E_{k,t})$ meets $Q_k$. 
    For $1\leq t\leq p$, choose $x_{k,t}\in E_{k,t}$ with $f(x_{k,t})\in Q_k$. Choose a coding of $x_{k,t}$ within the symbolic approximate square defining $E_{k,t}$, and let $\eta_{k,t}\in[m]^{\N}$ be its row address. So $\pi_2(x_{k,t})=P_m(\eta_{k,t})$. See Figure~\ref{fig:last-case} for an illustration. 
    
    \begin{figure}[ht]
        \centering
        \begin{tikzpicture}[>=Latex,scale=0.9,every node/.style={font=\small}]
            \begin{scope}[xshift=0cm,yshift=0cm]
                \draw[thick,->] (0,0) -- (0,5.0);
                \draw[thick,->] (-0.2,0) -- (5.2,0);
                \draw[thick] (0.8,1.2) rectangle (3.8,4.0);
                \node at (2.3,0.8) {$Q_k$};
                \fill (1.52,2.38) circle (1.6pt);
                \node[left] at (1.58,2.36) {$y_0$};

                \begin{scope}[shift={(1.18,1.12)},rotate=35]
                    \draw[blue,thick] (0,0) rectangle (1.2,1.2);        
                    \draw[blue,thick] (1.2,0) rectangle (2.4,1.2);      
                    \draw[blue,thick] (0,1.2) rectangle (1.2,2.4);      
                    \draw[blue,thick] (1.2,1.2) rectangle (2.4,2.4);    
                    \draw[blue,thick] (1.2,-1.2) rectangle (2.4,0);     
                    \node[blue] at (0.6,1.8) {\scriptsize $f(E_{k,1})$};
                    \node[blue] at (1.8,1.8) {\scriptsize $f(E_{k,2})$};
                    \node[blue] at (0.63,0.58) {\scriptsize $f(E_{k,3})$};
                    \node[blue] at (1.8,0.6) {\scriptsize $f(E_{k,4})$};
                    \node[blue] at (1.8,-0.6) {\scriptsize $f(E_{k,5})$};
                \end{scope}
            \end{scope}

            \draw[->,thick] (5.5,2.6) -- (7,2.6);
            \node at (6.25,3.0) {$f^{-1}$};

            \begin{scope}[xshift=8.7cm,yshift=0cm]
                \draw[thick,->] (0,0) -- (0,5.0);
                \draw[thick,->] (-0.2,0) -- (5.2,0);

                \fill (2.55,2.9) circle (1.7pt);
                \node[right] at (2.1,3.1) {$x_0$};

                \draw[densely dashed] (0,2.9) -- (2.55,2.9);
                \node[left] at (-0.08,2.9) {$P_m(\tau)$};

                \draw[blue,thick] (1.3,3.3) rectangle (2.7,4.6);  
                \draw[blue,thick] (2.7,3.3) rectangle (4.1,4.6);  
                \draw[blue,thick] (1.3,2.0) rectangle (2.7,3.3);  
                \draw[blue,thick] (2.7,2.0) rectangle (4.1,3.3);  
                \draw[blue,thick] (2.7,0.7) rectangle (4.1,2.0);  

                \node[blue] at (2.0,3.75) {\scriptsize $E_{k,1}$};
                \node[blue] at (3.4,3.75) {\scriptsize $E_{k,2}$};
                \node[blue] at (2.0,2.38) {\scriptsize $E_{k,3}$};
                \node[blue] at (3.4,2.38) {\scriptsize $E_{k,4}$};
                \node[blue] at (3.4,1.05) {\scriptsize $E_{k,5}$};

                \fill[red] (1.9,4.05) circle (1.5pt);
                \node[red,above left] at (2.2,4.05) {$x_{k,1}$};
                \draw[red,dashed] (0,4.05) -- (1.9,4.05);
                \node[red,left] at (-0.08,4.05) {$P_m(\eta_{k,1})$};

                \fill[red] (3.35,4.3) circle (1.5pt);
                \node[red,above right] at (3.3,4.0) {$x_{k,2}$};

                \fill[red] (2.25,2.75) circle (1.5pt);
                \node[red,left] at (2.25,2.72) {$x_{k,3}$};

                \fill[red] (3,2.75) circle (1.5pt);
                \node[red,right] at (3,2.75) {$x_{k,4}$};

                \fill[red] (3.45,1.55) circle (1.5pt);
                \node[red,below right] at (3.28,1.58) {$x_{k,5}$};
                \draw[red,dashed] (0,1.55) -- (3.45,1.55);
                \node[red,left] at (-0.08,1.55) {$P_m(\eta_{k,5})$};
            \end{scope}
        \end{tikzpicture}
        \caption{A local cover and its pullback under $f^{-1}$ (color online)}
        \label{fig:last-case}
    \end{figure}

    Since both $f(x_{k,t})$ and $y_0=f(x_0)$ lie in $Q_k$, 
    \[
        |P_m(\eta_{k,t})-P_m(\tau)| = |\pi_2(x_{k,t})-\pi_2(x_0)| \leq |x_{k,t}-x_0| \leq r^{-1}\diam Q_k.
    \]
    Note that $\diam Q_k \leq \sqrt{2}n^{-\ell'_k}$ and $\ell'_k-\ell_k=d$. It then follows from~\eqref{eq:kappa} that 
    \[
        |P_m(\eta_{k,t})-P_m(\tau)| \leq r^{-1}\sqrt{2}n^{-\ell'_k} = \frac{\sqrt2}{\kappa}n^{-\ell_k} \leq n^{-\ell_k} < m^{-\widehat{\ell_k}+1} < m^{-\widehat{\ell_k}+3},
    \]
    that is, the requirements in Proposition~\ref{prop:good-scales} are met. Since there are only $p$ covering sets, we may pass to one further subsequence so that
    \begin{equation}\label{eq:source-row-tail-limits}
        \sigma^{\ell_k}\eta_{k,t}\to \eta_t, \quad 1\leq t\leq p.
    \end{equation}
    By Proposition~\ref{prop:good-scales}, every $\eta_t$ has infinitely many coordinates outside $J_*$. This conclusion is stable under further subsequences. Indeed, if $k_1<k_2<\cdots$ is a subsequence, we can extend (for each $t$) the family $\{\eta_{k_j,t}\}$ to all good scales, say $\{\widetilde{\eta}_{k,t}\}$, by setting $\widetilde{\eta}_{k_j,t}=\eta_{k_j,t}$ and $\widetilde{\eta}_{k,t}=\tau$ at the omitted indices $k$. The distance condition in Proposition~\ref{prop:good-scales} holds for $\widetilde\eta_{k,t}$: it was proved above at the retained indices, while at the omitted ones the distance is zero. Hence every limit point of $\sigma^{\ell_k}\widetilde\eta_{k,t}$ has infinitely many coordinates outside $J_*$. Since
    \[
        \sigma^{\ell_{k_j}}\widetilde\eta_{k_j,t} = \sigma^{\ell_{k_j}}\eta_{k_j,t}\to \eta_t,
    \]
    the same is true of $\eta_t$.

    We next identify the limiting sets. Let $\b_k$ be the row word fixed by $Q_k$ between levels $\ell'_k+1$ and $\widehat{\ell'_k}$, and let $\b_{k,t}$ be the block of $\eta_{k,t}$ at levels $\ell_k+1,\ldots,\widehat{\ell_k}$. As before, for $\ell\geq 1$ we write $\widetilde{D}_\ell= \diag(1,m^{s_\ell})$, where $s_\ell=\ell\log_m n-\widehat{\ell}\in (-1,0]$. By Lemma~\ref{lem:local-form}, there exist $z_k,z_{k,t}\in[-1,0]^2$ such that
    \[
        n^{\ell'_k}(Q_k-y_0) = \widetilde{D}_{\ell'_k}H(\b_k)+z_k \quad\text{and}\quad n^{\ell_k}(E_{k,t}-x_{k,t}) = \widetilde{D}_{\ell_k}H(\b_{k,t})+z_{k,t}.
    \]
    Applying $f$ to the second identity gives
    \[
        n^{\ell_k}(f(E_{k,t})-f(x_{k,t})) = rO\widetilde{D}_{\ell_k}H(\b_{k,t})+rOz_{k,t}.
    \]
    It follows that (recall that $\ell'_k=\ell_k+d$ and $\kappa=rn^d$)
    \begin{align*}
        n^{\ell'_k}(f(E_{k,t})-y_0) &= n^{\ell'_k}(f(E_{k,t})-f(x_{k,t})) + n^{\ell'_k}(f(x_{k,t})-y_0) \\
        &= \kappa O\widetilde{D}_{\ell_k}H(\b_{k,t})+a_{k,t},
    \end{align*}
    where $a_{k,t} := \kappa Oz_{k,t}+n^{\ell'_k}(f(x_{k,t})-y_0)$. Since both $f(x_{k,t})$ and $y_0$ belong to $Q_k$, $|a_{k,t}|\leq\sqrt{2}\kappa+\sqrt{2}$.

    The second diagonal entries of the matrices $\widetilde{D}_{\ell'_k}, \widetilde{D}_{\ell_k}$ lie in $(m^{-1},1]$, the translations $z_k,a_{k,t}$ are bounded, and the lengths of $\b_k$ tend to infinity. By compactness and a diagonal argument for the row words, we can extract a common subsequence on which
    \[
        \widetilde{D}_{\ell'_k}\to D_0,\, \widetilde{D}_{\ell_k}\to D_1,\, \b_k\to\xi,\, z_k\to z_0,\, \text{ and } a_{k,t}\to a_{0,t}.
    \]
    Both limiting matrices have the form $\operatorname{diag}(1,c)$ with $c\in[m^{-1},1]$. Also note that for each $1\leq t\leq p$, the word $\b_{k,t}$ is the prefix of $\sigma^{\ell_k}\eta_{k,t}$ of length $\widehat{\ell_k}-\ell_k\to\infty$. For any fixed number of digits, these words eventually agree with those in~\eqref{eq:source-row-tail-limits}. Thus $\b_{k,t}\to\eta_t$ coordinatewise.

    Recall that $\pi_2(K)=[0,1]$. By the above convergences and Lemmas~\ref{lem:fibreconverge} and~\ref{lem:hd-continuity},
    \begin{align*}
        n^{\ell'_k}(Q_k-y_0) & \to D_0(C(\xi)\times[0,1])+z_0 =: F, \\
        n^{\ell'_k}(f(E_{k,t})-y_0) &\to \kappa OD_1(C(\eta_t)\times[0,1])+a_{0,t} =: G_t.
    \end{align*}
    Let $T_k(z):=n^{\ell'_k}(z-y_0)$ and set
    \[
        \lambda_k := T_k\Pi\Big( \frac{\nu|_{C_{\ell'_k}(\omega)}}{\nu(C_{\ell'_k}(\omega))} \Big).
    \]
    Since $\spt\lambda_k=T_k(Q_k)\to F$, we may pass to a further subsequence such that $\lambda_k\to\lambda$ weakly. By Lemma~\ref{lem:fullsptlimit}, $\spt\lambda=F$. 
    The covering relation~\eqref{eq:cover-qkfk} and~\eqref{eq:density-final} then imply
    \[
        \lambda_k\Big( \bigcup_{t=1}^p T_k(f(E_{k,t})) \Big) \geq \frac{\nu(A\cap C_{\ell'_k}(\omega))}{\nu(C_{\ell'_k}(\omega))} \to 1.
    \]
    Since $T_k(f(E_{k,t}))\to G_t$ for every $t$, we have by Lemma~\ref{lem:finite-cover-limit} that
    \begin{equation*}
        F = \spt\lambda \subset \bigcup_{t=1}^p G_t.
    \end{equation*}
    Since $C(\xi)$ is non-empty and $D_0$ is diagonal and invertible, $F$ contains a nondegenerate vertical segment. By Baire's theorem, some $G_t$ contains a non-empty relatively open subset of $F$ and hence a nondegenerate vertical segment.
    
    On the other hand, writing $\eta_t:=j_1j_2\cdots$, we have 
    \[
        \mathcal{L}^1(C(\eta_t)) \leq \prod_{q=1}^N \frac{\#\Gamma_{j_q}}{n}, \quad \forall N\geq 1.
    \]
    Since there are infinitely many digits of $\eta_t$ lying outside $J_*$, the product tends to zero as $N\to\infty$. So $C(\eta_t)$ contains no nondegenerate interval. If $G_t$ contained a nondegenerate vertical segment, its inverse image under $z\mapsto\kappa OD_1z+a_{0,t}$ would be an oblique segment in $C(\eta_t)\times[0,1]$. This forces $C(\eta_t)$ to contain a segment, which leads to a contradiction.
\end{proof}

Propositions~\ref{prop:first-cases} and~\ref{prop:fullrow-case} establish Theorem~\ref{thm:nonoblique}. 

\begin{remark}\label{rem:full-no-interchange}
    When $J=[m]$, the arguments in Case~1 of the proof of Proposition~\ref{prop:first-cases} and in the preceding proof yield a stronger conclusion: for every self-affine measure $\mu$ with positive weights and every similitude $f(z)=rOz+a$,
    \[
        \mu(K\cap f(K))>0 \quad\Longrightarrow\quad O \text{ is diagonal}
    \]
    because these arguments also rule out anti-diagonal $O$. Indeed, in both cases\footnote{In particular, in the full-row case, the function $\pi_2f^{-1}$ still has a nonzero horizontal coefficient, so the choice of density point and scales remains valid.}, the limiting cover contains a nondegenerate vertical segment on the target side, while every covering set has the form $\kappa OD_1\bigl(C(\eta_t)\times[0,1]\bigr)+c_t$ with $\mathcal{L}^1(C(\eta_t))=0$.     For anti-diagonal $O$, the vertical projection of each such set is an affine image of $C(\eta_t)$. The covering relation would therefore place a nondegenerate interval inside a finite union of Lebesgue-null sets, a contradiction.
\end{remark}

\section{The conjecture of Elekes, Keleti and M\'ath\'e}
\label{sec:EKMconj}


In the sequel of the paper, we further adopt the following notation.
\begin{itemize}
    \item $I:=\{i\in[n]: \exists j\in[m] \text{ such that } (i,j)\in\Gamma\}$;
    \item $X:=\pi_1(K)=E(n,I)$ and $Y:=\pi_2(K)=E(m,J)$.
\end{itemize}

Theorem~\ref{thm:nonoblique} reduces the problem to the diagonal and anti-diagonal orthogonal parts. Both cases rely on a rigidity statement for one-dimensional deleted-digit sets, which we establish first. In the diagonal case, we compare two carpets with identical coordinate projections. In the anti-diagonal case, we exclude multiplicatively independent bases and show that the positive-measure intersection implies a product structure when the bases are dependent.

\subsection{Embedding rigidity for deleted-digit sets}

We use the notation $E(q,\D)$ from~\eqref{eq:deleted-digit}. The following lemma converts an inclusion modulo integer translations into equality up to translation. Its proof adapts the argument of Algom and Hochman~\cite[Proposition~3.1]{AH19} to two digit sets of equal cardinality and to an inclusion modulo $\Z$.

\begin{lemma}\label{lem:deleted-digit-translation}
    Let $q\geq2$ and let $\D,\D'\subset[q]$ be non-empty digit sets with $\#\D=\#\D'$. If
    \[
        E(q,\D')\subset E(q,\D)+c+\Z
    \]
    for some $c\in\R$, then $E(q,\D')$ is a translate of $E(q,\D)$.
\end{lemma}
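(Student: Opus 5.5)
The plan is to reduce the inclusion modulo $\Z$ to a genuine inclusion $E(q,\D')\subset E(q,\D)+c'$ for some $c'\in\R$. Then $\#\D=\#\D'$ together with self-similarity should force equality. Write $E:=E(q,\D)$, $E':=E(q,\D')$, $N:=\#\D=\#\D'$, $s:=\log N/\log q$, and let $\mu,\mu'$ be the uniform Bernoulli measures on $E,E'$. Both sets satisfy the open set condition with $0<\h^s<\infty$, and $\mu=\h^s|_E/\h^s(E)$, $\mu'=\h^s|_{E'}/\h^s(E')$. The case $N=1$ is trivial (both are points) and $N=q$ gives $E=E'=[0,1]$, so assume $2\le N<q$.

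\emph{Step 1 (dynamics of the admissible translations).} Let $S:=\{c\in\R/\Z: E'\subset E+c+\Z\}$. It is closed because $E,E'$ are compact and the inclusion is a closed condition. It is also forward invariant under $c\mapsto qc$. Indeed, $E'+d'\subset qE'$ for $d'\in\D'$ gives $E'\subset qE'+\Z$, while $qE=\bigcup_{d\in\D}(E+d)\subset E+\Z$. Hence $E'\subset qE'+\Z\subset qE+qc+\Z\subset E+qc+\Z$. The same argument applied to a single piece $(E'+d')/q\subset E+c+\Z$ of $E'$ gives a localized version: each level-$k$ cylinder $\phi'_{w}(E')$ lies in $E+c+\Z$, and rescaling it by $q^k$ gives $E'\subset E+(q^kc-\text{const})+\Z$.

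\emph{Step 2 (localization to one integer translate).} Only finitely many integers $j$ (say $|j|\le 2$) satisfy $(E+c+j)\cap[0,1]\ne\varnothing$, so $E'$ is covered by finitely many closed sets $E'\cap(E+c+j)$. By Baire's theorem one of them contains a relatively open piece of $E'$, hence a cylinder $\phi'_w(E')\subset E+c+j$. Rescaling this cylinder by $q^{|w|}$ and using the self-similarity of $E$ produces $E'\subset q^{|w|}(E+c+j)-\text{const}$. Now $q^{|w|}E$ is a union of $N^{|w|}$ integer translates of $E$, so we land again in a finite union of translates $E+c''+j'$. The aim is to iterate this, or alternatively to select a cylinder of $E'$ that sits inside a single cylinder of $E+c+j$ of the same level, and so obtain $E'\subset E+c'$ for one $c'\in\R$. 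I expect this to be the main obstacle. The first thing to try is to compare $\h^s$-masses: at every level $k$, each of the $N^k$ level-$k$ cylinders of $E'$ has mass $N^{-k}\h^s(E')$ and must sit inside the union of the at most two adjacent level-$k$ cylinders of $E+c+\Z$ it meets. A pigeonhole count at a level where some cylinder of $E'$ meets only one cylinder of $E+c+\Z$ should then give the single-translate inclusion.

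\emph{Step 3 (equality).} Suppose $E'\subset E+c'$. Compare $\h^s$-masses: if $\h^s(E')=\h^s(E)$, then $E+c'\setminus E'$ is a relatively open subset of $E+c'$ of zero $\h^s$-measure, hence empty since $\spt(\h^s|_E)=E$, and $E'=E+c'$. To obtain the mass equality, note that each level-$k$ cylinder of $E'$ is a translate of $q^{-k}E'$ and lies in $E+c'$. Using the open set condition for $E$, the density $\h^s(E\cap I)\le C|I|^s$ on intervals, and the fact that both sets have exactly $N^k$ level-$k$ cylinders, one shows that the cylinders of $E'$ and of $E+c'$ must match up one to one, so that $\mu'$ equals the translate of $\mu$. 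This gives $E'=\spt\mu'=E+c'$, proving that $E(q,\D')$ is a translate of $E(q,\D)$.
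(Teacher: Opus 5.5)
There is a genuine gap, and it is the step you flag as the main obstacle. Step~1 is correct. The Baire localization in Step~2, however, cannot make progress on its own. If $\phi'_w(E')\subset E+c+j$ with $|w|=k$, rescaling by $q^k$ and using $q^kE\subset E+\Z$ only returns $E'\subset E+q^kc+\Z$. That is the original hypothesis with $c$ replaced by $q^kc$, which Step~1 already gave you; iterating just moves $c$ along its $\times q$ orbit. Finding a level-$k$ cylinder of $E'$ inside a \emph{single} level-$k$ cylinder of $E+c+\Z$ would indeed give $E'\subset E+c'$, but nothing in the proposal produces such a level. The $\h^s$ pigeonhole you sketch does not force one either: both sides have $N^k$ cylinders of comparable mass, and a cylinder of $E'$ straddling two cylinders of $E+c+\Z$ contradicts no mass bound.

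Step~3 has the same gap in another guise. From $E'\subset E+c'$ you only get $\h^s(E')\le\h^s(E)$, and $\h^s(E(q,\D))$ is not determined by $\#\D$ (for instance $E(3,\{0,1\})=\tfrac12E(3,\{0,2\})$). So the assertion that the cylinders ``match up one to one'' is exactly what has to be proved. It does not follow from the open set condition, the upper density bound and the cylinder count.

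The obstruction is arithmetic rather than metric, and the missing idea is the one the paper uses. Normalize $\min\D=\min\D'=0$ and put $t:=\{-c\}$. Then $t\in E$ and $\{x+t\}\in E$ for all $x\in E'$, and the key claim is that $t$ must be a $q$-adic rational.
\begin{itemize}
\item Suppose not. Then its unique expansion $t=\sum_k i_kq^{-k}$ has all $i_k\in\D$.
\item Test $x=d'q^{-k}$ with $d'\in\D'$; this $x$ lies in $E'$ since $0\in\D'$. Reading off the $k$-th digit of $\{t+x\}$ and using $\#\D=\#\D'$ gives $i_k+\D'\equiv\D\pmod q$ for every $k$.
\item Suppose there were a carry $i_k+d'\geq q$ at some $k\geq2$. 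Testing $x=eq^{-(k-1)}+d'q^{-k}$ would give $i_{k-1}+1+\D'\equiv\D\equiv i_{k-1}+\D'\pmod q$, forcing $\D'=[q]$.
\item Hence $\D=i_k+\D'$ exactly for $k\geq2$. The normalization then forces $i_k=0$ for all $k\geq2$, contradicting non-$q$-adicity.
\end{itemize}

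Once $q^Nt\in\Z$, the inclusion $q^{-N}E'\subset E'$ absorbs the shift: $E'+q^Nt\subset q^NE+\Z\subset E+\Z$. So $E'\cap(0,1)\subset E$, hence $E'\subset E$ because $E'$ is perfect. Comparing first-level cylinders gives $\D'\subset\D$, so $\D'=\D$. This single digit argument replaces both your Step~2 and your Step~3, with no Hausdorff-measure input at all.
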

\begin{proof}
    Since $E(q,\D-\min \D)$ is a translate of $E(q,\D)$, we may assume that $\min\D=\min\D'=0$. It suffices to show that $\D=\D'$. For convenience, write $A:=E(q,\D)$ and $B:=E(q,\D')$. The cases $\#\D=1$ and $\#\D=q$ are immediate, so suppose that $2\leq\#\D<q$. 

    Let $t:=\{-c\}\in[0,1)$ be the fractional part of $-c$. So $B+t\subset A+\Z$. Since $0\in A\cap B$, we have $t\in A$ and
    \begin{equation}\label{eq:mod-translation}
        \{x+t\}\in A, \quad \forall x\in B.
    \end{equation}
    The key is noting that $t$ must be a $q$-adic rational. Otherwise, write its unique base-$q$ expansion as $t=\sum_{k\geq1}i_kq^{-k}$. Then $i_k\in\D$ for every $k$. For $d'\in\D'$, the point $d'q^{-k}$ belongs to $B$, and $\{t+d'q^{-k}\}$ is again non-$q$-adic. Its $k$-th digit is $i_k+d'$ modulo $q$. By~\eqref{eq:mod-translation}, $i_k+\D'\pmod q\subset\D$. Since $\#\D=\#\D'$,
    \begin{equation}\label{eq:digit-translate}
        \D=i_k+\D'\pmod q, \quad k\geq 1.
    \end{equation}

    We claim that $i_k+d'<q$ for $k\geq2$ and $d'\in\D'$. Assume on the contrary that some $i_k+d'\geq q$. Clearly, $i_k+d'\leq 2q-2$. So $i_k+d'=q+r_k$ for some $0\leq r_k\leq q-2$. For any $e\in\D'$, 
    \begin{align*}
        t+\Big( \frac{e}{q^{k-1}}+\frac{d'}{q^k} \Big) &= \frac{i_1}{q}+\cdots+\frac{i_{k-2}}{q^{k-2}}+\frac{i_{k-1}+e}{q^{k-1}}+\frac{i_k+d'}{q^k} + \frac{i_{k+1}}{q^{k+1}} + \cdots \\
        &= \frac{i_1}{q}+\cdots+\frac{i_{k-2}}{q^{k-2}}+\frac{i_{k-1}+e+1}{q^{k-1}}+\frac{r_k}{q^k} + \frac{i_{k+1}}{q^{k+1}} + \cdots.
    \end{align*}
    So the $(k-1)$-st digit of this number is $i_{k-1}+e+1\pmod q$. By~\eqref{eq:mod-translation} (taking $x$ as $\frac{e}{q^{k-1}}+\frac{d'}{q^k}\in B$) and the uniqueness of such an expansion, $i_{k-1}+e+1\pmod q\in\D$. Since $e\in\D'$ was arbitrary,
    \[
        i_{k-1}+\D'+1\pmod q \subset \D.
    \]
    Comparing cardinalities again, $\D=i_{k-1}+\D'+1\pmod q$. By~\eqref{eq:digit-translate}, we obtain 
    \[
        \D'+1=\D'\pmod q.
    \]
    Then $\D'=[q]$, contrary to $\#\D'<q$. This proves the claim.

    Now~\eqref{eq:digit-translate} becomes $\D=i_k+\D'$ for every $k\geq2$. Since $\min\D=\min\D'=0$, $i_k=0$ for every $k\geq2$. Then $t$ is $q$-adic, which leads to a contradiction.

    Choose $N\geq0$ such that $q^Nt\in\Z$. Since $0\in\D'$, we have $q^{-N}B\subset B$. So $q^{-N}B+t \subset A+\Z$. Multiplying by $q^N$ and using $q^NA\subset A+\Z$, we obtain
    \[
        B\subset q^NA+\Z\subset A+\Z.
    \]
    Hence $B\cap(0,1)\subset A$. Since $B$ is perfect and $A$ is closed, $B\subset A$. Comparing the first-level cylinders, we have $\D'\subset\D$ and equality follows from $\#\D'=\#\D$.
\end{proof}

\begin{proposition}\label{prop:deleted-rigidity}
    Let $q\geq2$ and let $\D,\D'\subset[q]$ be non-empty with $\#\D=\#\D'$. Let $\lambda$ be a self-similar measure on $E(q,\D)$ with positive weights. If $\psi(x)=\varepsilon x+c$, where $\varepsilon\in\{\pm 1\}$, $c\in\R$, and
    \[
        \lambda\bigl(E(q,\D)\cap\psi(E(q,\D'))\bigr)>0,
    \]
    then $E(q,\D)=\varepsilon E(q,\D')+c'$ for some $c'\in\R$.
\end{proposition}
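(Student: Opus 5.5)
The plan is to reduce the positive-measure intersection to an inclusion modulo $\Z$ of the form required by Lemma~\ref{lem:deleted-digit-translation}. Write $A:=E(q,\D)$ and $B:=E(q,\D')$. First handle $\varepsilon=-1$: since $-B = E(q,(q-1)-\D') - 1$ and $(q-1)-\D'\subset[q]$ has the same cardinality as $\D'$, we may replace $\D'$ by $(q-1)-\D'$ and adjust $c$. It then suffices to treat $\varepsilon=1$, i.e.\ $\psi(x)=x+c$, and to show that $B$ is a translate of $A$.

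Next I would localize by density, as in the proof of Lemma~\ref{lem:line-null}. Code $A$ by $\D^{\N}$ with the Bernoulli measure whose pushforward is $\lambda$. Take a cylinder density point of the preimage of $A\cap(B+c)$. Then for cylinders $[w]$ of level $k$ with $\phi_w(x)=q^{-k}x+P_q(w)$, the quantity
\[
\lambda\bigl(\phi_w^{-1}(B+c)\bigr)=\frac{\lambda\text{-mass of }(A\cap(B+c))\text{ in the cylinder }w}{\nu([w])}
\]
tends to $1$. Now observe that $\phi_w^{-1}(B+c)=q^kB+q^k c-q^kP_q(w)$. Since $0\in B$ after normalizing $\min\D'=0$, we have $q^kB\subset B+\Z$ (because $qB\subset B+[q]$). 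Hence $\phi_w^{-1}(B+c)\subset B+c_k+\Z$ with $c_k:=q^kc-q^kP_q(w)$. Consequently $\lambda(B+c_k+\Z)\to1$.

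Passing to a subsequence with $c_k\to c_*$ modulo $1$, and using that $B+c_k+\Z$ restricted to $[0,1]$ is a finite union of compact translates converging in the Hausdorff metric, Lemma~\ref{lem:finite-cover-limit} gives $A=\spt\lambda\subset B+c_*+\Z$. This is an inclusion in the direction opposite to Lemma~\ref{lem:deleted-digit-translation}, but that lemma is symmetric in the roles of $\D$ and $\D'$ (both have equal cardinality). Applying it with the roles swapped yields that $A$ is a translate of $B$. Undoing the reflection then gives $E(q,\D)=\varepsilon E(q,\D')+c'$.

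The main obstacle I expect is the scaling step: I must verify that $q^kB\subset B+\Z$ (which needs $0\in\D'$, achieved by translating), and handle the periodized finite union carefully so that Lemma~\ref{lem:finite-cover-limit} applies. This requires intersecting with a fixed compact window containing $A$, so that only finitely many integer translates, uniformly bounded in number, are relevant.
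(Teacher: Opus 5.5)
Your proposal is correct and takes essentially the same route as the paper: reduce to $\varepsilon=1$ via $1-E(q,\D')=E(q,q-1-\D')$, rescale at a density point to get $\lambda(B+c_k+\Z)\to 1$, pass to a limit to obtain $A\subset B+c_*+\Z$, and apply Lemma~\ref{lem:deleted-digit-translation} with the two digit sets interchanged. The differences are minor: you use symbolic cylinder density as in Lemma~\ref{lem:line-null}, which avoids the paper's choice of a density point with a unique base-$q$ expansion, and your normalization $0\in\D'$ is unnecessary, since $qB=\D'+B$ gives $q^kB\subset B+\Z$ for every digit set.
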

\begin{proof}
    The cases $\#\D=1$ and $\#\D=q$ are immediate. Since
    \[
        1-E(q,\D')=E(q,q-1-\D'),
    \]
    reflecting $\D'$ reduces the remaining cases to $\varepsilon=1$. Write $A:=E(q,\D)$ and $B:=E(q,\D')$. So $\lambda(B+c)>0$.

    Since $\#\D>1$, $\lambda$ is non-atomic. In particular, it gives zero mass to $q$-adic rationals. By the one-dimensional version of Lemma~\ref{lem:density}, we can find a density point $x\in A\cap(B+c)$ with a unique base-$q$ expansion $x=\sum_{k\geq 1} i_kq^{-k}$ with $i_k\in\D$. For $N\geq 1$, let $a_N := \sum_{k=1}^N i_kq^{-k}$ and $A_N := q^{-N}A+a_N$. Then $A_N$ is the unique level-$N$ cylinder of $A$ containing $x$.
    So
    \[
        \lim_{N\to\infty} \frac{\lambda(A_N\cap(B+c))}{\lambda(A_N)} = 1.
    \]
    Define $T_N(y):=q^N(y-a_N)$, $y\in\R$. It is easy to check that $\lambda=T_N(\frac{\lambda|_{A_N}}{\lambda(A_N)})$, which gives  
    \[
        \lambda(T_N(B+c)) = \frac{\lambda(A_N\cap (B+c))}{\lambda(A_N)}.
    \]
    Writing $t_N:=q^N(c-a_N)$, we have $T_N(B+c)=q^NB+t_N$ and thus $\lambda(q^NB+t_N) \to 1$ as $N\to\infty$. 
    Since $q^NB\subset B+\Z$, it follows that 
    \begin{equation}\label{eq:periodic-cover}
        \lambda(B+t_N+\Z) \to 1, \quad N\to\infty.
    \end{equation}

    Passing to a subsequence, we may assume that $t_N\to t$ modulo $\Z$. Then $A\subset B+t+\Z$. This is because if a point of $A$ lies outside this closed set, a sufficiently small neighborhood of that point would miss $B+t_N+\Z$ for all large $N$. But that neighborhood has positive $\lambda$-measure, contradicting~\eqref{eq:periodic-cover}. Applying Lemma~\ref{lem:deleted-digit-translation} (with the two digit sets interchanged), we establish the proposition.
\end{proof}

For the case of different bases, we need only an embedding as described below.

\begin{lemma}\label{lem:deleted-embedding}
    Let $A=E(q_1,\D_1)$ and $B=E(q_2,\D_2)$, where $q_1,q_2\geq2$ and both digit sets contain at least two elements. Let $\lambda$ be a self-similar measure on $A$ with positive weights. If $\psi$ is an invertible similitude and $\lambda(A\cap\psi(B))>0$, then there is an invertible similitude $h$ such that $h(A)\subset B$.
\end{lemma}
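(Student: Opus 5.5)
We follow the density-point and rescaling argument of Proposition~\ref{prop:deleted-rigidity}, now with two different bases. Since $\lambda$ has positive weights and $\#\D_1\geq2$, it is non-atomic, so it gives zero mass to the countable set of $q_1$-adic rationals. By the one-dimensional form of Lemma~\ref{lem:density}, we pick a density point $x\in A\cap\psi(B)$ with a unique base-$q_1$ expansion. We let $A_N=q_1^{-N}A+a_N$ be the level-$N$ cylinder of $A$ containing $x$, and $T_N(y)=q_1^N(y-a_N)$. As before, $\lambda=T_N(\lambda|_{A_N}/\lambda(A_N))$, so
\[
    \lambda\bigl(T_N\psi(B)\bigr)=\frac{\lambda(A_N\cap\psi(B))}{\lambda(A_N)}\to1 .
\]
Write $T_N\psi(y)=\rho_N y+s_N$, where $|\rho_N|=q_1^N|\rho|$ and $\rho$ is the ratio of $\psi$. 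These ratios blow up, so we renormalize on the $B$ side. We choose $M=M(N)$ with $q_2^{-M}\le q_1^N|\rho|<q_2^{-M+1}$, which we can do once $N$ is large (negative $M$ is allowed; we only need $M\to -\infty$). More concretely, we use the inclusion $q_2^{M'}B\subset B+\Z$ for $M'\ge0$ to push $\psi(B)$ into a periodic set. Precisely, $\rho_N B\subset \rho_N q_2^{-M'}(B+\Z)$ with $M'=M'(N)\ge 0$ chosen so that $\theta_N:=\rho_Nq_2^{-M'}$ has $|\theta_N|\in[1,q_2)$.

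This gives
\[
    \lambda\bigl(\theta_N(B+\Z)+s_N\bigr)\to1 .
\]
Here $\theta_N(B+\Z)+s_N$ is a periodic closed set with period $|\theta_N|\in[1,q_2)$. Passing to a subsequence, we get $\theta_N\to\theta$ with $|\theta|\in[1,q_2]$, and $s_N\to s$ modulo $\theta\Z$; this is compact because $A$ is bounded and only finitely many periods matter near $[0,1]$. Then the closed-neighbourhood argument from the proof of Proposition~\ref{prop:deleted-rigidity} (or Lemma~\ref{lem:finite-cover-limit}), together with $\spt\lambda=A$, gives
\[
    A\subset \theta(B+\Z)+s .
\]

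It remains to convert this periodic inclusion into a genuine embedding. Since $A\subset[0,1]$ is bounded and $|\theta|\ge1$, the interval $[0,1]$ meets at most two consecutive translates $\theta(B+k)+s$, separated by gaps of the periodic set. We apply Baire's theorem to the finite closed cover $A=\bigcup_k A\cap(\theta(B+k)+s)$. Some piece contains a relatively open subset of $A$, hence a cylinder $q_1^{-L}A+b$. Consequently $q_1^{-L}A+b\subset\theta(B+k)+s$, and
\[
    h(y):=\theta^{-1}(q_1^{-L}y+b-s)-k
\]
is an invertible similitude with $h(A)\subset B$.

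The main obstacle is the bookkeeping of the renormalized maps $T_N\psi$, namely keeping the translations in a compact set modulo the period after using $q_2^{M'}B\subset B+\Z$. If that becomes awkward, the fallback is to avoid periodization entirely. In that approach we cover $A_N\cap\psi(B)$ by boundedly many images under $\psi$ of cylinders of $B$ of comparable size. This is the one-dimensional analogue of Lemma~\ref{lem:finite-covering} and Proposition~\ref{prop:prescribed-cover}. Their rescalings are of the form $\theta_N B+c_{N,t}$ with bounded parameters, and passing to limits via Lemmas~\ref{lem:hd-continuity} and~\ref{lem:finite-cover-limit} gives $A\subset\bigcup_{t}(\theta B+c_t)$. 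The Baire step then finishes the proof exactly as above.
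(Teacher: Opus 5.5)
Your proposal is correct and follows essentially the same route as the paper: you take a non-$q_1$-adic density point, rescale by $T_N$, periodize using $q_2^{M'}B\subset B+\Z$ at a normalized scale, pass to the limiting inclusion $A\subset\theta(B+\Z)+s$, and extract a cylinder with Baire's theorem. The differences are cosmetic: you absorb $|\rho|$ into the normalization $|\theta_N|\in[1,q_2)$, whereas the paper normalizes $q_1^Nq_2^{-k_N}$ and keeps $\alpha$ as a fixed factor; the translation limit is cleanest stated as $s_N/\theta_N\to t$ modulo $\Z$; and the fallback argument is not needed.
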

\begin{proof}
    As in the previous proof, the cylinder density lemma provides a density point $x\in A\cap\psi(B)$ with a unique base-$q_1$ expansion $x=\sum_{k\geq 1} i_kq_1^{-k}$, where $i_k\in\D_1$. For $N\geq 1$, let $a_N := \sum_{k=1}^N i_kq_1^{-k}$ and $A_N := q_1^{-N}A+a_N$. 
    Then
    \[
        \lim_{N\to\infty} \frac{\lambda(A_N\cap\psi(B))}{\lambda(A_N)} = 1.
    \]
    The map $T_N(y):=q_1^N(y-a_N)$ sends the normalized restriction of $\lambda$ to $A_N$ to $\lambda$. So $\lambda(T_N\psi(B))\to 1$. Writing $\psi(z)=\alpha z+c$ and $t_N:=q_1^N(c-a_N)$, we have $T_N\psi(B)=q_1^N\alpha B+t_N$ and hence $\lambda(q_1^N\alpha B+t_N) \to 1$. For all large $N$, pick an integer $k_N\geq 0$ such that $1\leq q_1^Nq_2^{-k_N}<q_2$. Since $q_2^{k_N}B\subset B+\Z$, we have 
    \[
        q_1^N\alpha B+t_N \subset q_1^{N}\alpha q_2^{-k_N}\Big( B+\frac{t_N}{q_1^{N}\alpha q_2^{-k_N}}+\Z \Big) =: E_N.
    \]
    Then $\lambda(E_N)\to 1$. 
    
    Passing to a subsequence, we may assume that $q_1^Nq_2^{-k_N}\to \beta$ and $\frac{t_N}{q_1^{N}\alpha q_2^{-k_N}}\pmod\Z\to t$. Let $E:=\alpha \beta(B+t+\Z)$ be the ``limiting set''. Then it is not hard to see that every compact set disjoint from $E$ is disjoint from $E_N$ for all large $N$. As before, together with $\lambda(E_N)\to 1$ and $\spt\lambda=A$, this implies that $A\subset \alpha \beta(B+t+\Z)$. By Baire's theorem, some $\alpha \beta(B+t+u)$ contains a non-empty relatively open subset of $A$ and hence a cylinder copy $\varphi(A)$. Then 
    \[
        \alpha ^{-1}\beta^{-1}\varphi(A)-t-u\subset B,
    \]
    as desired.
\end{proof}

\begin{remark}\label{rem:ratio-log}
    The similarity ratio of $h$ constructed above equals $|\alpha ^{-1}\beta^{-1}q_1^{-k}|$ for some integer $k\geq 1$, where $\beta$ is an accumulation value of the sequence $\{q_1^{N}q_2^{-k_N}\}_{N\geq 1}$. By our choice of $k_N$, if $q_1=q_2$ then $k_N=N$ and thus $\beta=1$; so the similarity ratio of $h$ belongs to $|\alpha |^{-1}q_1^{\Z}$. If $\frac{\log q_1}{\log q_2}\in\Q$, then $q_2$ is simply a rational power of $q_1$ and hence $\{q_1^Nq_2^{-k_N}\}_{N\geq 1}$ is a finite set. Consequently, any subsequential limit of this sequence belongs to $q_1^{\Q}$. Then the similarity ratio of $h$ belongs to $|\alpha |^{-1}q_1^{\Q}$. We shall use these observations in the final section.
\end{remark}

Before moving on, let us address the degenerate case of Theorem~\ref{thm:ekmconj}. If $K$ is contained in a line $L$, then the direction of $L$ is preserved by $\diag(n^{-1},m^{-1})$. Since $n\neq m$, $L$ must be either horizontal or vertical, and $K$ is a copy of a deleted-digit set on $L$ with positive Hausdorff dimension. Since $\mu$ is non-atomic, a positive-measure intersection implies that $f(L)=L$. The restriction of $f$ to $L$ is a one-dimensional isometry, and Proposition~\ref{prop:deleted-rigidity} shows that $f(K)$ is a translate of $K$. 

For the remainder of the section, we always assume that $K$ is not contained in a line. Thus $\#I,\#J\geq2$, and Lemma~\ref{lem:line-null} implies that every line has $\mu$-measure zero. In particular, geometric cylinders only overlap on sets of measure zero, and their normalized restrictions are the affine images of $\mu$ corresponding to those cylinders.

\subsection{Axis-preserving isometries}

The following comparison result adapts the argument of~\cite[Proposition 7.1]{EKM10} to two patterns.

\begin{proposition}\label{prop:rigidity-carpet}
    Let $\Gamma'\subset[n]\times[m]$ satisfy $\#\Gamma'=\#\Gamma$ and write $K'=K(n,m,\Gamma')$. Assume that $\pi_i(K')=\pi_i(K)$, $i=1,2$. If $\mu(K\cap (K'+t))>0$ for some $t\in\R^2$, then $K'=K$.
\end{proposition}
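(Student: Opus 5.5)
The plan is to mimic the density-point/blow-up argument of Proposition~\ref{prop:deleted-rigidity}, now in two dimensions and with cylinders of $K$, and then to reduce to a digit-level statement analogous to Lemma~\ref{lem:deleted-digit-translation}.

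\emph{Step 1: blow-up at a density point.} Since $K$ is not contained in a line, Lemma~\ref{lem:line-null} makes lines $\mu$-null. Let $A:=\Pi^{-1}(K\cap(K'+t))$ and choose a cylinder density point $\omega\in A$ (Lemma~\ref{lem:density}) whose image has non-$n$-adic first coordinate and non-$m$-adic second coordinate; the excluded set is a countable union of lines, hence null. Put $T_k:=\phi_{\omega|_k}^{-1}$. Because $\mu=T_k\bigl(\mu|_{\phi_{\omega|_k}(K)}/\mu(\phi_{\omega|_k}(K))\bigr)$ (overlaps are null), we get $\mu(T_k(K'+t))\to1$. Now $T_k(K'+t)=\diag(n^k,m^k)K'+t_k$. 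Since $\diag(n^k,m^k)K'\subset K'+\Z^2$, we have $\mu(K'+t_k+\Z^2)\to1$. Passing to a subsequence with $t_k\to s$ modulo $\Z^2$ and arguing as in Proposition~\ref{prop:deleted-rigidity} with $\spt\mu=K$, this yields
\[
K\subset K'+s+\Z^2 .
\]

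\emph{Step 2: pin down $s$.} Project to each coordinate: $X=\pi_1K\subset \pi_1K'+s_1+\Z=X+s_1+\Z$, and likewise $Y\subset Y+s_2+\Z$. The hard part is to show that $s$ can be taken in $\Z^2$. I would rerun the digit argument of Lemma~\ref{lem:deleted-digit-translation} with $\D=\D'$. If $X$ is an interval, then $s_1$ is irrelevant modulo the periodic structure, but it still must be handled; I would exploit instead that $K$ and $K'$ have the same projections and the same number of digits, and argue digitwise in the carpet. Concretely, that argument gives $s_1$ $n$-adic and $s_2$ $m$-adic. After multiplying by $\diag(n^N,m^N)$, using the self-affinity of both sides and the inclusion $\diag(n^N,m^N)K'\subset K'+\Z^2$, we reduce to $K\subset K'+\Z^2$, which is the analogue of the last paragraph of Lemma~\ref{lem:deleted-digit-translation}. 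When a projection is a full interval the $q$-adic claim can fail. In that case I would avoid it by choosing the density point with a generic address, so that every finite pattern recurs in its vertical tail, and then argue directly at the level of first-level cells. I expect this step to be the main obstacle.

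\emph{Step 3: conclude at the level of cells.} From $K\subset K'+\Z^2$ and the fact that both sets lie in $[0,1]^2$, we get $K\cap(0,1)^2\subset K'$. Cylinder pieces $\phi_{i,j}(K)$ with $(i,j)\in\Gamma$ meet the open cells $(i/n,(i+1)/n)\times(j/m,(j+1)/m)$ in sets of positive measure. Since $K'$ only meets the interior of a cell $(i,j)$ when $(i,j)\in\Gamma'$ (up to boundary lines, which are null), we obtain $\Gamma\subset\Gamma'$. Equal cardinality then gives $\Gamma=\Gamma'$, that is, $K=K'$.
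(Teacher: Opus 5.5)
Your Steps 1 and 3 are sound. When both projections are proper deleted-digit sets ($2\le\#I<n$ and $2\le\#J<m$), your Step 2 also works: the digit argument of Lemma~\ref{lem:deleted-digit-translation} with $\D=\D'$ shows that $s_1$ is $n$-adic and $s_2$ is $m$-adic, and rescaling by $\diag(n^N,m^N)$ then gives $K\subset K'+\Z^2$.

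\textbf{The gap.} The unresolved case is $I=[n]$ or $J=[m]$. You flag it but do not resolve it, and there your sub-goal is the wrong one, because $s$ cannot be pinned down. Take $K=K'=E(n,I)\times[0,1]$ and $t=(0,\frac{1}{m+1})$. The hypothesis holds, yet every limit $s$ is $(0,\frac{1}{m+1})$ or $(0,\frac{m}{m+1})$ modulo $\Z^2$, since $m^k\equiv(-1)^k \pmod{m+1}$.

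Your proposed remedy cannot change this. The base point $P_k$ of $\phi_{\omega|_k}(K)$ satisfies $\diag(n^k,m^k)P_k\in\Z^2$, so $t_k\equiv\diag(n^k,m^k)t\pmod{\Z^2}$. Hence $s$, and the inclusion $K\subset K'+s+\Z^2$, do not depend on the density point $\omega$ at all. Moreover, $K\subset K'+s+\Z^2$ implies $\mu(K\cap(K'+s+v))>0$ for some $v\in\Z^2$. So Step 1 by itself only returns you to the hypothesis with a new translation vector; the content of the proposition lies in the part you left open.

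\textbf{The missing idea.} Instead of controlling the translation, read off a periodicity of the digit set from how the finitely many integer translates $K'+u+v$ share the mass. The paper does this as follows.
\begin{enumerate}
\item Either $\mu(K\cap(K'+u))=1$ for some $u$, and then the projections force $u=0$. Otherwise upper semicontinuity gives a uniform gap $\sup_u\mu(K\cap(K'+u))\le1-\delta$.
\item In the gap case, take a density cylinder $Q$ of the second-iterate system and rescale each child $\phi_Q\phi_e(K)$, where $e\in M\Gamma+\Gamma$ and $M=\diag(n,m)$. All rescalings involve the same $u$.
\item At most four integer translates carry mass, and none carries more than $1-\delta$. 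So two of them, $v_1\neq v_2$, carry mass $>\varepsilon$, and both must be admissible for every $e$.
\item This yields $(M\Gamma+\Gamma)+(v_2-v_1)\equiv M\Gamma+\Gamma\pmod{M^2\Z^2}$ with $v_2-v_1\in\{0,\pm1\}^2\setminus\{0\}$.
\item By \cite[Lemma 7.3 and Remark 7.6]{EKM10}, this forces $K=E(n,I)\times[0,1]$ or $K=[0,1]\times E(m,J)$. Equal projections and $\#\Gamma'=\#\Gamma$ then give $\Gamma'=\Gamma$.
\end{enumerate}

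Your full cover from Step 1 could replace the mass gap here: a translate that alone can cover a non-empty relatively open part of $K$ is forced for every $e$. But a periodicity argument of this kind is indispensable, and first-level cells alone do not settle it. For $n=4$, $m=2$ and $\Gamma=\{(0,0),(1,1),(2,0),(3,1)\}$, both projections are $[0,1]$ and $\Gamma+(1,1)\equiv\Gamma\pmod{4\Z\times2\Z}$, yet $K$ is not a product.
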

\begin{proof}
	If $\mu(K\cap (K'+u))=1$ for some $u=(u_1,u_2)\in\R^2$, then $K\subset K'+u$. It follows that 
	\[
	X = \pi_1(K) \subset \pi_1(K'+u) = X+u_1
	\]
	and (similarly) $Y\subset Y+u_2$. Thus $u_1=u_2=0$; then $K\subset K'$. Since $\#\Gamma'=\#\Gamma$, we must have $\Gamma'=\Gamma$, which gives $K'=K$.
	
   Assume that $\mu(K\cap (K'+u))<1$ for all $u\in\R^2$. Note that the map $g(u)=\mu(K\cap(K'+u))$ is upper semicontinuous on $\mathbb{R}^2$ and vanishes outside the compact difference set $K-K'$. So $g$ attains the maximum on $\mathbb{R}^2$ and there is a constant $\delta>0$ such that 
	\begin{equation}\label{eq:massdrop}
		\sup_{u\in\R^2} \mu(K\cap (K'+u)) \leq 1-\delta.
	\end{equation}
	
	Write $M:=\diag(n,m)$. Let $\Gamma_2 := M\Gamma+\Gamma$ and $\Gamma'_2:=M\Gamma'+\Gamma'$. Then $K=K(n^2,m^2,\Gamma_2)$ and $K'=K(n^2,m^2,\Gamma'_2)$. Choose $0<\varepsilon<\delta/8$ and let $p_*:=\min_{(i,j)\in\Gamma} p_{i,j}$. By Lemma~\ref{lem:density}, there is a cylinder $Q$ (with respect to the system associated with the triple $(n^2,m^2,\Gamma_2)$) such that $\mu(Q \cap (K'+t)) > \big( 1-\varepsilon p_*^2 \big)\mu(Q)$. Thus $\mu(Q\setminus (K'+t))< \varepsilon p_*^2 \mu(Q)$. Denote by $\phi_Q$ the cylinder map associated with $Q$. For $e\in\Gamma_2$, write $\phi_e(z):=M^{-2}(z+e)$. From $\phi_Q\phi_e(K)\subset Q$, we see that
    \[
		\mu\big(\phi_Q\phi_e(K)\setminus (K'+t)\big) <\varepsilon p_*^2\mu(Q)\leq \varepsilon\mu(\phi_Q\phi_e(K)).
	\]
	Thus 
	\[
		\mu\big( \phi_Q\phi_{e}(K)\cap (K'+t)  \big) > (1-\varepsilon)\mu(\phi_Q\phi_{e}(K)).
	\]

    Applying $(\phi_Q\phi_e)^{-1}$ to both sides of the above inequality, we get
    \begin{equation}\label{eq:density-rescaling}
        \mu\big( K\cap (\phi_Q\phi_e)^{-1}(K'+t) \big) > 1-\varepsilon, \quad \forall e\in\Gamma_2.
    \end{equation}
    Write $\phi_Q(z)=M^{-2k}z+s$. Then $(\phi_Q\phi_e)^{-1}(z)=M^{2(k+1)}(z-s)-e$. Since $K'=K(n^2,m^2,\Gamma'_2)$, it is not hard to see that $M^{2(k+1)}K'\subset K'+\Gamma'_2+M^2\Z^2$. Writing $u:=M^{2(k+1)}(t-s)$, we then have
    \begin{align*}
        (\phi_Q\phi_e)^{-1}(K'+t) &= M^{2(k+1)}K'+u-e \\
        &\subset K'+ u-e+\Gamma'_2+M^2\Z^2 \\
        &= \bigcup_{v\in\Z^2: e+v\in\Gamma'_2+M^2\Z^2} (K'+u+v).
    \end{align*}
    So~\eqref{eq:density-rescaling} implies
	\begin{equation}\label{eq:density-children2}
		\sum_{v\in\Z^2: e+v\in\Gamma'_2+M^2\Z^2} \mu(K\cap(K'+u+v)) > 1-\varepsilon, \quad \forall e\in\Gamma_2.
	\end{equation}
	Here distinct integer translation copies of $K'$ may meet only on the coordinate boundary lines, which have $\mu$-measure zero (recall Lemma~\ref{lem:line-null}). 
	
	Since $K,K'\subset[0,1]^2$, there are at most four integer vectors $v$ with $\mu(K\cap (K'+u+v))>0$. Enumerate them as $v_{1},\ldots,v_{p}$ (where $p\leq 4$) so that $\beta_j := \mu(K\cap (K'+u+v_j))>0$ is decreasing. Inequality~\eqref{eq:density-children2} implies $1-\varepsilon\leq \sum_{j=1}^{p} \beta_j \leq 1$. On the other hand, we have by~\eqref{eq:massdrop} that $\beta_{1}\leq 1-\delta$. By our previous choice of $\varepsilon$, $p\geq 2$ and 
	\[
	    \beta_1 \geq \beta_2 \geq \frac{\sum_{j=1}^p \beta_j-\beta_1}{p-1} > \frac{\delta-\varepsilon}{3} > \varepsilon.
	\]
	So for every $e\in\Gamma_2$, $v_{1}$ and $v_{2}$ always appear in the summation in~\eqref{eq:density-children2}. Then 
	\[
	    e+v_{k} \in \Gamma'_2+M^2\Z^2, \quad \forall k=1,2, \forall e\in\Gamma_2.
	\]
	So $\Gamma_2+v_k\subset \Gamma'_2+M^2\Z^2$. Both sides consist of $(\#\Gamma)^2$ distinct residue classes modulo $M^2\Z^2$ and thus $\Gamma_2+v_k+M^2\mathbb{Z}^2=\Gamma_2'+M^2\mathbb{Z}^2$, $k=1,2$. So    
	\[
	    \Gamma_2 + (v_2-v_1) +M^2\Z^2 = \Gamma_2+M^2\Z^2.
	\]
	
	Clearly, $v_2-v_1\in\{0,\pm 1\}^2\setminus\{(0,0)\}$. By~\cite[Lemma 7.3 and Remark 7.6]{EKM10} and because of $n\neq m$, either $K=E(n,I)\times[0,1]$ or $K=[0,1]\times E(m,J)$. Combining this with $\#\Gamma'=\#\Gamma$ and $\pi_i(K')=\pi_i(K)$, $i=1,2$, we have $\Gamma'=\Gamma$ and thus $K'=K$.
\end{proof}

\begin{proposition}
    Assume that $O=\diag(o_1, o_2)$,  where $o_1,o_2\in\{\pm 1\}$ and $O$ is not the identity matrix. If $f(z)=Oz+a$ satisfies $\mu(K\cap f(K))>0$, then $K$ and $OK$ differ by a translation.
\end{proposition}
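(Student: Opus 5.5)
The plan is to reduce to Proposition~\ref{prop:rigidity-carpet}. Observe first that $OK$ is, up to a translation, again a Bedford--McMullen carpet with the same bases. If $o_1=-1$, then $x\mapsto 1-x$ sends $E(n,\Gamma_j)$ to $E(n,n-1-\Gamma_j)$. If $o_2=-1$, then $y\mapsto 1-y$ reverses rows via $j\mapsto m-1-j$. Hence $OK+b_0=K'$, where $K'=K(n,m,\Gamma')$, $b_0\in\{0,1\}^2$ is a suitable vector, and $\Gamma'$ is the image of $\Gamma$ under the digit maps $(i,j)\mapsto(\epsilon_1(i),\epsilon_2(j))$. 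Here $\epsilon_1(i)=i$ or $n-1-i$, and $\epsilon_2(j)=j$ or $m-1-j$. In particular $\#\Gamma'=\#\Gamma$, and $f(K)=K'+t$ for some $t\in\R^2$, so the hypothesis reads $\mu(K\cap(K'+t))>0$. We may also assume that $K$ is not contained in a line, since that degenerate case was treated before the section.

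The remaining task is to find translations aligning the projections, that is, to show $\pi_i(K')=\pi_i(K)$ after replacing $\Gamma'$ by a shifted digit set. The projections are $\pi_1(K')=o_1X+\text{const}=E(n,I')$ and $\pi_2(K')=E(m,J')$, with $\#I'=\#I$ and $\#J'=\#J$. Project the intersection: $\pi_1\mu$ is a self-similar measure on $X=E(n,I)$ with positive weights, and
\[
    \pi_1\mu\bigl(X\cap(o_1X+c_1)\bigr)\geq\mu(K\cap f(K))>0.
\]
The same holds for the second coordinate with $Y$ and $\pi_2\mu$. Proposition~\ref{prop:deleted-rigidity} then gives $X=o_1X+c_1'$ and $Y=o_2Y+c_2'$. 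Equivalently, $E(n,I')$ is a translate of $E(n,I)$, so $I'=I+s_1$ for some integer $s_1$; similarly $J'=J+s_2$. Replacing $\Gamma'$ by $\Gamma''=\Gamma'-(s_1,s_2)\subset[n]\times[m]$ gives $K''=K(n,m,\Gamma'')$, a translate of $K'$. It has the same projections as $K$, satisfies $\#\Gamma''=\#\Gamma$, and still satisfies $\mu(K\cap(K''+t''))>0$ for some $t''\in\R^2$.

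Proposition~\ref{prop:rigidity-carpet} now yields $K''=K$. Since $K''$ is a translate of $OK$, it follows that $OK$ is a translate of $K$.

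The main point needing care is the step where the translate $E(n,I')=E(n,I)+c$ must come from an integer digit shift. Both sets have minimum digits at their minima, and both have gap structure at level one, so comparing minima should force $c=(\min I'-\min I)/(n-1)$. Comparing the first-level cylinders then gives $I'=I+(\min I'-\min I)$, provided $\#I\geq2$. If this direct argument has trouble, I would instead normalize $\min I=\min I'=0$ as in Lemma~\ref{lem:deleted-digit-translation}, which already shows that the digit sets coincide. Apart from this step, the argument is bookkeeping of reflections of digit sets.
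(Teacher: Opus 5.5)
Your proposal is correct and follows essentially the same route as the paper. You project to obtain $X=o_1X+c_1'$ and $Y=o_2Y+c_2'$ from Proposition~\ref{prop:deleted-rigidity}, realize a translate of $OK$ as a carpet $K(n,m,\Gamma')$ with $\#\Gamma'=\#\Gamma$ and the same coordinate projections as $K$, and conclude with Proposition~\ref{prop:rigidity-carpet}.

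The only difference is bookkeeping. The paper translates by a single vector $b$ chosen so that the digits transform as $i\mapsto \min I+\max I-i$ (and similarly for $j$ when $o_2=-1$). This is exactly your reflection $i\mapsto n-1-i$ followed by your integer shift $s_1=n-1-\max I-\min I$. Your justification of that shift, via comparing minima and first-level cylinders, is sound.
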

\begin{proof}
    Write $a=(a_1,a_2)$. From $\mu(K\cap f(K))>0$ we see that 
    \[
        \pi_1\mu(X\cap (o_1X+a_1))>0 \quad\text{and}\quad \pi_2\mu(Y\cap(o_2Y+a_2))>0.
    \]
    Both $\pi_1\mu$ and $\pi_2\mu$ are self-similar measures with positive weights. By Proposition~\ref{prop:deleted-rigidity}, $X$ and $o_1X$ (resp. $Y$ and $o_2Y$) differ by a translation.

    To make this precise, note that the left endpoints of $X$ and $-X$ are $\frac{\min I}{n-1}$ and $-\frac{\max I}{n-1}$, respectively. So $o_1X+b_1=X$ and $o_2Y+b_2=Y$, where 
    \[
        b_1:=\frac{1-o_1}{2}\cdot\frac{\min I+\max I}{n-1} \quad\text{and}\quad b_2:=\frac{1-o_2}{2}\cdot\frac{\min J+\max J}{m-1}.
    \]
    Let $b:=(b_1,b_2)$. 
    For any $(i,j)\in\Gamma$, set $i'=o_1i+(n-1)b_1$ and $j'=o_2j+(m-1)b_2$. Then $i'\in[n]$, $j'\in[m]$, and 
    \[
        o_1\cdot\frac{x+i}{n}+b_1 = \frac{(o_1x+b_1)+i'}{n} \quad\text{and}\quad o_2\cdot\frac{y+j}{m}+b_2 = \frac{(o_2y+b_2)+j'}{m}
    \]
    for all $(x,y)\in\R^2$. Consequently, $O\phi_{i,j}(K)+b=\phi_{i',j'}(OK+b)$.
    
    It follows directly from the above calculation that $OK+b$ is the attractor of $\{\phi_{i',j'}\}_{(i',j')\in\Gamma'}$, where
    \[
        \Gamma' := \Big\{ (o_1i+(n-1)b_1, o_2j+(m-1)b_2): (i,j)\in\Gamma \Big\} \subset [n]\times[m].
    \]
    In other words, $OK+b$ equals the carpet $K'=K(n,m,\Gamma')$. So $\pi_1(K')=o_1X+b_1=X$, $\pi_2(K')=o_2Y+b_2=Y$, $\#\Gamma'=\#\Gamma$, and $f(K) = OK+a=K'+a-b$. By Proposition~\ref{prop:rigidity-carpet}, $K'=K$ and hence $OK=K-b$.
\end{proof}

This settles the diagonal case of Theorem~\ref{thm:ekmconj}.

\subsection{Axis-interchanging isometries}
\label{subsec:axis-interc-isom}

The multiplicatively independent case follows from the one-dimensional embedding obstruction. Recall that we always assume that $K$ is not contained in a line and $\#I,\#J\geq2$.

\begin{proposition}\label{prop:independent-no-interchange}
    For every axis-interchanging isometry $f$, if $\mu(K\cap f(K))>0$ then $\frac{\log n}{\log m}\in\Q$.
\end{proposition}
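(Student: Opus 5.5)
We argue by contradiction. Assume $\frac{\log n}{\log m}\notin\Q$ and that $f$ is an axis-interchanging isometry, so $f(x,y)=(\varepsilon_1 y+a_1,\varepsilon_2 x+a_2)$ with $\varepsilon_1,\varepsilon_2\in\{\pm1\}$. The plan is to turn the positive-measure intersection into an affine embedding between the coordinate projections $X=E(n,I)$ and $Y=E(m,J)$. Such an embedding will then be ruled out by Shmerkin's theorem~\cite[Corollary~7.6]{Shm19}, in the same way the introduction describes.

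\emph{Step 1: project the intersection to one axis.} Since $\pi_1 f(x,y)=\varepsilon_1 y+a_1$, we have $\pi_1(f(K))=\varepsilon_1 Y+a_1$. Hence $K\cap f(K)\subset \pi_1^{-1}\bigl(X\cap(\varepsilon_1 Y+a_1)\bigr)$, which gives
\[
    \pi_1\mu\bigl(X\cap\psi(Y)\bigr)\geq \mu(K\cap f(K))>0, \qquad \psi(y):=\varepsilon_1 y+a_1 .
\]
Here $\pi_1\mu$ is a self-similar measure on $X=E(n,I)$ with positive weights (the column weights). Because $K$ is not contained in a line, $\#I,\#J\geq2$.

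\emph{Step 2: build an embedding.} Lemma~\ref{lem:deleted-embedding} applies with $A=X$, $q_1=n$, $B=Y$, $q_2=m$. It yields an invertible similitude $h$ with $h(X)\subset Y$. (Projecting to the second axis instead would give an embedding of $Y$ into $X$; either direction will do.)

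\emph{Step 3: obtain the contradiction.} We have a similar copy of $E(n,I)$ inside $E(m,J)$, with $\frac{\log n}{\log m}\notin\Q$. If $\#J=m$, then $Y=[0,1]$ and the inclusion carries no information, so this case needs separate handling; see the next paragraph. If $2\le \#J<m$, Shmerkin's result on affine embeddings between self-similar sets with multiplicatively independent contractions (the Furstenberg-type embedding conjecture in~\cite{Shm19}) states that $E(n,I)$ embeds affinely into $E(m,J)$ only if $\dimh E(n,I)=0$ or $\dimh E(n,I)\ge\dimh E(m,J)$ is excluded; more precisely, it requires $\dimh X=0$. Since $\#I\geq2$ gives $\dimh X>0$, this is a contradiction. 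When $\#I=n$, we use the reverse direction instead: project to the second axis and apply Lemma~\ref{lem:deleted-embedding} with the roles of $X$ and $Y$ swapped, which embeds $Y$ into $X$.

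\emph{Main obstacle: both projections may be intervals.} The hard case is $\#I=n$ and $\#J=m$. Then $X=Y=[0,1]$, and the one-dimensional embedding argument gives nothing. To handle it, I would invoke Remark~\ref{rem:full-no-interchange}. When $J=[m]$, the arguments of Proposition~\ref{prop:first-cases} (Case 1) and Proposition~\ref{prop:fullrow-case} already force $O$ to be diagonal whenever the intersection has positive measure. This excludes axis-interchanging $f$ outright, with no need for the arithmetic condition. In the remaining case at least one of $X,Y$ is a proper deleted-digit set with at least two digits, and I would choose the projection direction so that the proper set is the target of the embedding. Since Shmerkin's theorem applies whenever the embedded set has positive dimension and the target is a proper self-similar set of dimension less than $1$, the contradiction follows. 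The point to check carefully is that the exact hypotheses of~\cite[Corollary~7.6]{Shm19} are met in this configuration.
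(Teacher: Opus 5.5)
Your proposal is correct and is essentially the paper's proof. Remark~\ref{rem:full-no-interchange} rules out $J=[m]$, so $Y$ is a proper deleted-digit set with $\dimh Y<1$; projecting to the first axis gives $\pi_1\mu(X\cap(\varepsilon_1Y+a_1))>0$, Lemma~\ref{lem:deleted-embedding} yields $h(X)\subset Y$, and \cite[Corollary~7.6]{Shm19} (target homogeneous with the open set condition and dimension $<1$, source non-singleton) forces $\frac{\log n}{\log m}\in\Q$. Your extra case split on $\#I=n$ is unnecessary, and the reversal you suggest there would be vacuous, since an embedding of $Y$ into $X=[0,1]$ carries no information; once $J\neq[m]$, the embedding $X\hookrightarrow Y$ works regardless of $I$.
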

\begin{proof}
    Since $\mu(K\cap f(K))>0$, we know from Remark~\ref{rem:full-no-interchange} that $J\neq[m]$ and hence $\dimh Y<1$. Since $f$ is axis-interchanging, we may write
    \[
        f^{-1}(x,y)=(\delta_1y+c_1,\delta_2x+c_2),
    \]
    where $\delta_1,\delta_2\in\{\pm 1\}$. For every $(x,y)\in K\cap f(K)$, we have $\delta_2x+c_2\in Y$. Equivalently, $x\in\delta_2(Y-c_2)$. So $\pi_1(K\cap f(K))\subset X\cap (\delta_2Y-\delta_2c_2)$ and thus
    \[
        \pi_1\mu\bigl(X\cap(\delta_2Y-\delta_2c_2)\bigr)>0.
    \]
    By Lemma~\ref{lem:deleted-embedding}, there exists an invertible affine map $h$ such that $h(X)\subset Y$. Since $X$ is non-singleton, whereas $Y$ is homogeneous, satisfies the open set condition and has dimension less than one, \cite[Corollary 7.6]{Shm19} implies that $\frac{\log n}{\log m}\in\Q$.
\end{proof}

To complete the proof, we need a simple compactness observation.

\begin{lemma}\label{lem:compact-isometry}
    Let $A\subset\T$ be a non-empty compact set and let $h$ be an isometry on $\T$. If $h(A)\subset A$ then $h(A)=A$.
\end{lemma}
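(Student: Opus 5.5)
The plan is to show that $A\setminus h(A)$ is empty by iterating $h$ and using compactness. Isometries of $\T$ have the form $x\mapsto \pm x+c$, so they are homeomorphisms. In particular $h(A)$ is compact, and $h^k(A)$ is a decreasing sequence of non-empty compact sets. I will argue by contradiction: suppose some $x\in A\setminus h(A)$ exists.

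Since $h(A)$ is closed, $\varepsilon:=\dist(x,h(A))>0$. Consider the orbit points $x_k:=h^k(x)$, $k\geq0$; all of them lie in $A$ because $h(A)\subset A$. For $k>j\geq0$ we have
\[
    \dist(x_j,x_k)=\dist\bigl(h^j(x),h^j(h^{k-j}(x))\bigr)=\dist\bigl(x,h^{k-j}(x)\bigr),
\]
using that $h^j$ is an isometry. The point $h^{k-j}(x)$ lies in $h^{k-j}(A)\subset h(A)$, since $k-j\geq1$ and $h^{i}(A)\subset h(A)$ for all $i\geq1$. Hence $\dist(x_j,x_k)\geq\varepsilon$ for all $j\neq k$.

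This gives an infinite $\varepsilon$-separated subset of the compact set $A$. That is impossible, because finitely many balls of radius $\varepsilon/2$ cover $\T$, so no such infinite separated set exists. Therefore $A\subset h(A)$, and so $h(A)=A$.

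There is no genuine obstacle here. The only point needing care is the inclusion $h^{i}(A)\subset h(A)$ for $i\geq1$, which follows by induction from $h(A)\subset A$. Alternatively, one could argue via Hausdorff measure or cardinality, but the separation argument above is the cleanest and works for arbitrary compact $A$.
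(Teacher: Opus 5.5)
Your argument is correct and is essentially the paper's: the paper just cites Munkres' exercise that a distance-preserving self-map of a compact metric space is surjective, applied to $h|_A\colon A\to A$. The standard proof of that exercise is exactly your observation that the orbit $h^k(x)$, $k\geq 0$, of a point $x\in A\setminus h(A)$ is $\varepsilon$-separated, which is impossible in a compact set.
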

\begin{proof}
    This is easy; see~\cite[Exercise 28.6]{Mun00} for example.
\end{proof}

\begin{proposition}\label{prop:anti-product}
    Suppose $\frac{\log n}{\log m}\in\Q$ and that $f$ is an axis-interchanging isometry. For any self-affine measure $\mu$ with positive weights, if $\mu(K\cap f(K))>0$ then $\Gamma=I\times J$, $K=X\times Y$ and $\dimh X=\dimh Y$.
\end{proposition}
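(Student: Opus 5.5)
We follow the strategy announced in the introduction: pull back small cylinders by $f^{-1}$, iterate $(x,y)\mapsto(nx,my)$ until the vertical scale is one, and exploit the independence of disjoint digit blocks under $\nu$.

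\textbf{Setup.} Write $n^a=m^b$ with $\gcd(a,b)=1$, so $a<b$. By Remark~\ref{rem:full-no-interchange}, $J\neq[m]$. Write $f^{-1}(x,y)=(\delta_1y+c_1,\delta_2x+c_2)$. Let $A:=\Pi^{-1}(K\cap f(K))$ and choose a cylinder density point $\omega\in A$ (Lemma~\ref{lem:density}). For a cylinder $[\omega|_k]$, the geometric cylinder $\phi_{\omega|_k}(K)$ is a rectangle of width $n^{-k}$ and height $m^{-k}$. Its image under $f^{-1}$ is a rectangle of width $m^{-k}$ and height $n^{-k}$, and it lies almost entirely in $K$ (relative to the pulled-back measure).

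\textbf{Renormalization.} Apply to $f^{-1}\phi_{\omega|_k}(K)$ the toral map $M_s:(x,y)\mapsto(n^sx,m^sy)\pmod{\Z^2}$. Here $s$ is chosen so that the vertical scale becomes one, i.e. $m^s n^{-k}\approx 1$, so $s\approx k\log_m n$. The horizontal scale is then $n^sm^{-k}\to\infty$. Since $M_s(K)\subset K+\Z^2$, the renormalized set stays in the toral image $\widetilde K$ of $K$, up to a set of small measure. Choose $k$ in an arithmetic progression so that $n^{s}m^{-k}$ and $m^sn^{-k}$ take fixed values in $n^{\Q}$; this uses the dependence $n^a=m^b$.

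Now compare coordinates. The first coordinate of $M_sf^{-1}\Pi(\omega|_k\xi)$ is $n^s(\delta_1\pi_2\Pi(\omega|_k\xi)+c_1)\pmod 1$, which depends only on the vertical digits of $\xi$ beyond roughly level $s\log_m n - k$. This is a distant block, since $s\log_mn\gg k$. The second coordinate is $m^s(\delta_2\pi_1\Pi(\omega|_k\xi)+c_2)\pmod 1$, which depends on the horizontal digits of $\xi$ at the initial levels, namely up to about $s\log_n m-k$. For large $k$ these level ranges are disjoint, so under $\nu$ the two coordinates are independent.

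\textbf{Limit.} Pass to a subsequence along which the translation terms converge mod $1$. By independence, the pushforward measures converge to a product $\alpha\times\beta$. Here $\alpha$ is a rescaled, translated copy of the self-similar measure $\pi_2\mu$ projected to $\T$, with full support on an isometric image of $\widetilde Y$. Likewise $\beta$ has full support on an isometric image of $\widetilde X$. The rescaling factor is a power of $m$ (respectively $n$), and $m^jY\subset Y+\Z$, so it absorbs into $\widetilde Y$. The density condition and a finite-cover limit (Lemma~\ref{lem:finite-cover-limit}, on $\T^2$) then give
\[
h_1(\widetilde Y)\times h_2(\widetilde X)\subset\widetilde K
\]
for toral isometries $h_1,h_2$.

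\textbf{Conclusions.} Projecting to the coordinates gives $h_1(\widetilde Y)\subset\widetilde X$ and $h_2(\widetilde X)\subset\widetilde Y$. Composing these produces self-inclusions of $\widetilde X$ and $\widetilde Y$ by toral isometries. By Lemma~\ref{lem:compact-isometry} both are equalities, hence $h_1(\widetilde Y)=\widetilde X$ and $h_2(\widetilde X)=\widetilde Y$. Thus $\widetilde X\times\widetilde Y\subset\widetilde K$ up to isometries preserving each factor. Since $\widetilde K\subset\widetilde X\times\widetilde Y$ trivially, after undoing the translations this should force $\widetilde K=\widetilde X\times\widetilde Y$. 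Lifting from $\T^2$ to $[0,1]^2$ requires handling boundary lines, which Lemma~\ref{lem:line-null} makes $\mu$-null. Comparing first-level cylinders then gives $\Gamma=I\times J$ and $K=X\times Y$. Finally, $X$ and $Y$ are isometric mod $\Z$, so their Hausdorff dimensions coincide.

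\textbf{Main obstacle.} The delicate step is the independence and limit argument. One must check carefully which digit levels control each coordinate mod $1$; carries across the cut-off level must be shown negligible in the limit. One must also justify that the limit of the product measures has full support on the product of the two limiting sets. I would try to handle both by mimicking Lemma~\ref{lem:fullsptlimit}: fix finitely many digits in each of the two disjoint blocks and use the product structure of $\nu$ to bound masses of small balls from below.
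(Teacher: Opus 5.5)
Your proposal is correct and is essentially the paper's argument. The paper takes $w_t=\omega|_{p^2t}$ and $G_t=M^{pqt}f^{-1}\phi_{w_t}$ with $n^p=m^q$, so the second coordinate is exactly $\delta_2x+\beta_t$ and the first is $\delta_1 m^{(q^2-p^2)t}y+\alpha_t$. What your ``fixed values'' condition should really say is that these exponents are integers. Instead of identifying a weak limit, the paper directly fixes the first $L$ horizontal digits and the vertical digits at levels $(q^2-p^2)t+1,\ldots,(q^2-p^2)t+L$; this gives a set of $t$-independent mass mapped into a box missing $K_{\T}$, which is exactly the fix in your last paragraph.

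The only inaccuracy is the lifting step. It is purely topological: the quotient map is injective on $(0,1)^2$, and $X\cap(0,1)$, $Y\cap(0,1)$ are dense in $X$, $Y$. So Lemma~\ref{lem:line-null} plays no role there.
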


In what follows, we use a subscript $\T$ to denote reduction modulo $\Z^2$ or $\Z$ (according to the context). 

\begin{proof}
    Choose integers $1\leq p<q$ such that $n^p=m^q$. As before, write
    \[
        f^{-1}(x,y) = (\delta_1y+c_1,\delta_2x+c_2),
    \]
    where $\delta_1,\delta_2\in\{-1,1\}$ and $c_1,c_2\in\R$. Let $M: (x,y) \mapsto (nx, my) \pmod{\Z^2}$. Clearly, $M(K_{\T})=K_{\T}$.

    Write $A:=\Pi^{-1}(K\cap f(K))$. By Lemma~\ref{lem:density}, we can choose a cylinder density point $\omega\in A$. For $t\geq1$, set $w_t:=\omega|_{p^2t}$ and $A_t:=\{\eta\in\Sigma: w_t\eta\in A\}$. So
    \begin{equation}\label{eq:anti-cylinder-density}
        \nu(A_t) = \frac{\nu(A\cap[w_t])}{\nu([w_t])} \to 1, \quad t\to\infty.
    \end{equation}
    Consider $G_t:=M^{pqt}f^{-1}\phi_{w_t}$. It is easy to compute that
    \[
        G_t(x,y)= (\delta_1m^{(q^2-p^2)t}y+\alpha_t,\delta_2x+\beta_t) \pmod{\Z^2}
    \]
    for some $\alpha_t,\beta_t\in\T$. The original horizontal coordinate now appears in the second coordinate at scale one, whereas the vertical one is multiplied by $m^{(q^2-p^2)t}$ in the first. Note that (recall that $\Pi(\sigma\eta)=M\Pi(\eta)\pmod{\Z^2}$)
    \begin{equation}\label{eq:symbolic-Gt}
        G_t(\Pi(\eta)) = \big(\delta_1\pi_2\big(\Pi(\sigma^{(q^2-p^2)t}\eta)\big)+\alpha_t, \delta_2\pi_1(\Pi\eta)+\beta_t\big) \pmod{\Z^2}, \quad \forall \eta\in\Sigma.
    \end{equation}
    Passing to a subsequence, we may assume that $\alpha_t\to\alpha$ and $\beta_t\to\beta$. Let
    \[
        g(y) := \delta_1y+\alpha \quad\text{and}\quad h(x) := \delta_2x+\beta, \quad x,y\in\T.
    \]
    We claim that
    \begin{equation}\label{eq:anti-circle-product}
        g(Y_{\T})\times h(X_{\T})\subset K_{\T}.
    \end{equation}

    The proof uses the same disjoint-block observation as in Lemma~\ref{lem:fullsptlimit}. Suppose that~\eqref{eq:anti-circle-product} fails. Then there are $x_0\in X_{\T}$, $y_0\in Y_{\T}$ and $\varepsilon>0$ such that the open set
    \[
        U := \tilde{B}(g(y_0),\varepsilon) \times \tilde{B}(h(x_0),\varepsilon)
    \]
    is disjoint from $K_{\T}$. Here $\tilde{B}$ denotes a ball for the usual distance on $\T$, that is, $\dist_{\T}(a,b):=\min_{z\in\Z}|a-b-z|$. Fix a positive integer $L$ so large that $m^{-L},n^{-L}<\varepsilon/2$. Let $i_1\cdots i_L$ and $j_1\cdots j_L$ be the length-$L$ prefixes of addresses of $x_0$ and $y_0$, respectively. For all large $t$ with $(q^2-p^2)t\geq L$, let $E_t\subset\Sigma$ consist of sequences whose first $L$ horizontal digits are $i_1,\ldots,i_L$ and whose vertical digits at levels $(q^2-p^2)t+1,\ldots,(q^2-p^2)t+L$ are $j_1,\ldots,j_L$. Note that
    \[
        \nu(E_t) = \prod_{k=1}^L \Big( \sum_{j:(i_k,j)\in\Gamma}p_{i_k,j} \Big) \prod_{k=1}^L q_{j_k} =: c_L > 0,
    \]
    independently of $t$. For every $\eta\in E_t$, 
    \[
         \dist_{\T}(\pi_1(\Pi(\eta)), x_0) \leq n^{-L}<\varepsilon/2 \quad\text{and}\quad
         \dist_{\T}\big(\pi_2(\Pi(\sigma^{(q^2-p^2)t}\eta)),y_0\big) \leq m^{-L}<\varepsilon/2.
    \]
    Since $\alpha_t\to\alpha$ and $\beta_t\to\beta$, for all large $t$ we have $\dist_{\T}(\alpha_t,\alpha)<\varepsilon/2$ and $\dist_{\T}(\beta_t,\beta)<\varepsilon/2$. Together with~\eqref{eq:symbolic-Gt}, we see that $G_t(\Pi(E_t))\subset U$. 

    On the other hand, 
    \[
        G_t(\Pi(A_t)) = M^{pqt}f^{-1}\phi_{w_t}(\Pi(A_t)) \subset M^{pqt}f^{-1}(\Pi(A)) \subset M^{pqt}K_{\T} \subset K_{\T}.
    \] 
    Since $U\cap K_{\T}=\varnothing$, we have for all large $t$ that $E_t\cap A_t=\varnothing$ and thus
    \[
        \nu(A_t)\leq1-\nu(E_t)=1-c_L.
    \]
    This contradicts~\eqref{eq:anti-cylinder-density} and proves~\eqref{eq:anti-circle-product}.

    Projecting the inclusion~\eqref{eq:anti-circle-product}, we get $g(Y_{\T})\subset X_{\T}$ and $h(X_{\T})\subset Y_{\T}$. So
    \[
        gh(X_{\T}) \subset g(Y_{\T})\subset X_{\T}.
    \]
    Lemma~\ref{lem:compact-isometry} makes both inclusions equalities. Thus $g(Y_{\T})=X_{\T}$, $h(X_{\T})=Y_{\T}$, and~\eqref{eq:anti-circle-product} gives $X_{\T}\times Y_{\T}\subset K_{\T}$. The opposite inclusion always holds and hence $K_{\T}=X_{\T}\times Y_{\T}$. Since the quotient map is injective on $(0,1)^2$, 
    \[
        (X\cap(0,1))\times(Y\cap(0,1))\subset K.
    \]
    Both $X$ and $Y$ are non-singleton deleted-digit sets. In particular, they are perfect, and $X\cap(0,1)$ and $Y\cap(0,1)$ are dense in $X$ and $Y$. Taking closures, we get $K=X\times Y$. So $\Gamma=I\times J$. Finally, $g(Y_{\T})=X_{\T}$ implies $\dimh X=\dimh Y$.
\end{proof}

The product structure reduces the remaining rigidity statement to the one-dimensional result at a common base.

\begin{proof}[Proof of Theorem~\ref{thm:ekmconj}]
    The degenerate case was proved earlier. So we may assume that $K$ is not contained in any line. By Theorem~\ref{thm:nonoblique}, $O$ is either diagonal or anti-diagonal. The diagonal case was established in the previous subsection.

    For the anti-diagonal case, again write 
    \[
        f(x,y) = (\delta_1y+a_1, \delta_2 x+a_2),
    \]
    where $\delta_1,\delta_2\in\{\pm 1\}$. Since $\mu(K\cap f(K))>0$, we have by Proposition~\ref{prop:independent-no-interchange} that $\frac{\log n}{\log m}\in\Q$ and by Proposition~\ref{prop:anti-product} that $K=X\times Y$ and $\dimh X=\dimh Y$. Choose positive integers $p,q$ such that $n^p=m^q$. Then $X=E(n^p,I_p)$ and $Y=E(m^q,J_q)$, where $I_p:=\{\sum_{k=1}^p i_kn^{p-k}: i_k\in I\}$ and $J_q:=\{\sum_{k=1}^q j_km^{q-k}: j_k\in J\}$, respectively. Note that $\#I_p=\#J_q$ and 
    \[
        K \cap f(K) = (X\cap(\delta_1Y+a_1))\times (Y\cap(\delta_2 X+a_2)).
    \]
    Since it has positive $\mu$-measure, 
    \[
        \pi_1\mu(X\cap(\delta_1Y+a_1))>0 \quad\text{and}\quad \pi_2\mu(Y\cap(\delta_2 X+a_2))>0.
    \]
    These are self-similar measures with positive weights for the corresponding systems. By Proposition~\ref{prop:deleted-rigidity}, there are $c_1,c_2\in\R$ such that $X=\delta_1 Y+c_1$ and $Y=\delta_2X+c_2$. So 
    \[
        f(K) = (\delta_1Y+a_1) \times (\delta_2X+a_2) = (X+a_1-c_1) \times (Y+a_2-c_2),
    \]
    which is a translated copy of $K$.
\end{proof}

\section{Logarithmic commensurability}
\label{sec:log}

In this section, we fix $K=K(n,m,\Gamma)$, $\mu=\mu_{\mathbf{p}}$ and $f(z)=rOz+a$ (where $r>0$) and assume that $\mu(K\cap f(K))>0$ as in Theorem~\ref{thm:log}. We also retain the notation $X=\pi_1(K)$ and $Y=\pi_2(K)$. We shall use the following special case of the logarithmic commensurability theorem of Feng and Wang~\cite[Theorem 1.1]{FW09}.

\begin{lemma}[\cite{FW09}]\label{lem:FW09}
    Let $E=E(q,\D)$ be a deleted-digit set with $1<\#\D<q$. If $h$ is a similitude sending $E$ into itself, then the similarity ratio of $h$ is a rational power of $q$.
\end{lemma}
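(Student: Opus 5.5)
The plan is to deduce the statement directly from the general theorem of Feng and Wang~\cite[Theorem 1.1]{FW09}. That theorem asserts the following. Let $E$ be the attractor of a homogeneous iterated function system $\{\rho x+a_i\}_{i=1}^{k}$ on $\R$ that satisfies the open set condition and has $\dimh E<1$. If $h(x)=\lambda x+c$ is a similitude with $h(E)\subset E$, then $\frac{\log|\lambda|}{\log\rho}\in\Q$. The work is therefore mostly checking that $E(q,\D)$ fits this framework, and treating separately the maps not covered by it.

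First I will realize $E=E(q,\D)$ as the attractor of $\{x\mapsto (x+d)/q\}_{d\in\D}$. This system is homogeneous with common ratio $\rho=q^{-1}>0$. It satisfies the open set condition with open set $(0,1)$, since the images $(d/q,(d+1)/q)$, $d\in\D$, are pairwise disjoint and contained in $(0,1)$. By Moran's formula, $\dimh E=\frac{\log\#\D}{\log q}$. The hypothesis $1<\#\D<q$ places this dimension strictly in $(0,1)$, so $E$ is a non-degenerate Cantor set that is neither a point nor an interval.

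Next I will dispose of the non-contracting case. If $h(E)\subset E$ and the ratio $|\lambda|$ satisfies $|\lambda|\geq 1$, then
\[
    \diam h(E)=|\lambda|\diam E\leq\diam E,
\]
with $0<\diam E<\infty$. This forces $|\lambda|=1=q^{0}$, which is a rational power of $q$. Orientation-reversing maps ($\lambda<0$) need no separate argument: Feng and Wang's statement involves only $|\lambda|$. Alternatively, one can note that $h\circ h$ is orientation-preserving, sends $E$ into itself, and has ratio $\lambda^2$; a rational power of $q$ for $\lambda^2$ gives one for $|\lambda|$.

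For a contracting $h$ I then invoke~\cite[Theorem 1.1]{FW09} to get $\frac{\log|\lambda|}{\log q^{-1}}\in\Q$, that is, $|\lambda|\in q^{\Q}$. The only real obstacle is bookkeeping: confirming that the precise hypotheses of~\cite{FW09} are those I listed (homogeneity, the open set condition, and dimension strictly less than one), rather than, for example, the strong separation condition. If the cited version instead requires strong separation, I will pass to the equivalent system $\{x\mapsto (x+d)/q^2\}_{d\in q\D+\D}$. This system has the same attractor and ratio $q^{-2}$, whose rational powers coincide with those of $q$. Gaps between its first-level pieces appear at the level where digits are missing, and this should give the required separation.
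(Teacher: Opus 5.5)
Your proposal is correct and follows the paper, which also obtains the lemma directly as a special case of \cite[Theorem 1.1]{FW09}, namely a homogeneous system on $\R$ with the open set condition and $\dimh E<1$; your check of these hypotheses and your separate handling of $|\lambda|\geq 1$ and of orientation-reversing $h$ are fine. The strong-separation fallback is unnecessary, and it would not work anyway: for $q=4$ and $\D=\{0,1,3\}$ the adjacent pieces $E/4$ and $(E+1)/4$ share the point $1/4$, and the same touching occurs in the base-$q^2$ system and in every iterate, so you should drop that remark.
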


By Theorem~\ref{thm:nonoblique}, $O$ is either diagonal or anti-diagonal. The latter case is easy to deal with.

\begin{proposition}\label{prop:log-1prime}
	If $O$ is anti-diagonal then $\frac{\log n}{\log m}\in\Q$ and $r\in n^{\Q}=m^{\Q}$.
\end{proposition}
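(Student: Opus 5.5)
Throughout we assume, as in Theorem~\ref{thm:log}, that $K$ is not a segment. We first exclude $K$ being contained in a line. If it were, the line would be horizontal or vertical, and an anti-diagonal $O$ would map it to a perpendicular line. A positive-measure intersection of two perpendicular lines is impossible, since $\mu$ is non-atomic. So $\#I,\#J\geq2$, and Lemma~\ref{lem:line-null} applies.

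Write $f^{-1}(x,y)=(\delta_1 r^{-1}y+c_1,\delta_2 r^{-1}x+c_2)$ with $\delta_i\in\{\pm1\}$. Exactly as in the proof of Proposition~\ref{prop:independent-no-interchange}, every $(x,y)\in K\cap f(K)$ satisfies $\delta_2r^{-1}x+c_2\in Y$. Hence
\[
\pi_1\mu\bigl(X\cap(\delta_2 rY-\delta_2rc_2)\bigr)>0 .
\]
Symmetrically, the first coordinate of $f^{-1}$ gives $\pi_2\mu\bigl(Y\cap(\delta_1 rX+a')\bigr)>0$ for a suitable $a'$.

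Next we show the bases are dependent. By Remark~\ref{rem:full-no-interchange}, $J\neq[m]$, so $Y=E(m,J)$ with $2\leq\#J<m$ and $\dimh Y<1$. Lemma~\ref{lem:deleted-embedding} applied with $A=X$, $B=Y$, $\psi(z)=\delta_2 rz-\delta_2 rc_2$ yields a similitude $h$ with $h(X)\subset Y$. The set $X$ is non-singleton, so \cite[Corollary~7.6]{Shm19} forces $\frac{\log n}{\log m}\in\Q$, as in Proposition~\ref{prop:independent-no-interchange}. Consequently $n^{\Q}=m^{\Q}$.

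It remains to prove $r\in n^{\Q}$; this is the main step. By Remark~\ref{rem:ratio-log}, with $q_1=n$, $q_2=m$ and $|\alpha|=r$, the ratio of $h$ lies in $r^{-1}n^{\Q}$. Now we compose with a self-map of $Y$. Pick a cylinder similitude $\varphi$ of $X$ with ratio $n^{-k}$, and choose a similitude $g$ with $g(Y)$ contained in a cylinder of $X$. The second positive-measure relation, fed into Lemma~\ref{lem:deleted-embedding} with $A=Y$, $B=X$, gives such a $g$ with ratio in $r\,m^{\Q}$; this uses Remark~\ref{rem:ratio-log} again, with the roles of $n$ and $m$ swapped and $|\alpha|=r$.

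Then $h\circ g$ maps $Y$ into itself. Its ratio lies in $r^{-1}n^{\Q}\cdot r\,m^{\Q}$, which is automatically in $n^{\Q}$. So this composition gives no information, and it is not what we should use.

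A better route is a self-embedding of $Y$ whose ratio carries $r^{2}$. Apply Lemma~\ref{lem:deleted-embedding} to $A=X$, $B=Y$ to get $h$ with ratio $r^{-1}u$, $u\in n^{\Q}$. Then choose any similitude $e$ with $e(Y)\subset X$ and ratio in $m^{\Q}$; this exists when $Y$ embeds in a cylinder of $X$. Then $h\circ e$ maps $Y$ into $Y$ with ratio in $r^{-1}n^{\Q}$. Lemma~\ref{lem:FW09} applied to $E(m,J)$ then gives $r^{-1}\in m^{\Q}=n^{\Q}$.

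The obstacle is producing $e$. If $I=[n]$, then $X=[0,1]$ and we take $e(y)=y$, whose ratio is $1\in m^{\Q}$. If $I\neq[n]$, we instead apply Lemma~\ref{lem:FW09} directly to $X$ via the analogous composition. In one of the two cases we always have a proper deleted-digit set together with an embedding whose ratio involves $r$ to the first power, and Feng--Wang then gives $r\in n^{\Q}$.
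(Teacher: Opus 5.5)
Your proof that $\frac{\log n}{\log m}\in\Q$ is fine. The step proving $r\in n^{\Q}$, however, has a genuine gap, and it comes from misapplying Remark~\ref{rem:ratio-log}.

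Feed the second relation $\pi_2\mu\bigl(Y\cap(\delta_1 rX+a')\bigr)>0$ into Lemma~\ref{lem:deleted-embedding} with $A=Y$, $B=X$ and $\psi(z)=\delta_1 rz+a'$. The resulting $g$ with $g(Y)\subset X$ has ratio $|\alpha|^{-1}\beta^{-1}m^{-k}$ with $|\alpha|=r$. So its ratio lies in $r^{-1}m^{\Q}$, not in $r\,m^{\Q}$. Both embeddings you extract carry the same factor $r^{-1}$: in each relation, the set carrying the measure is locally covered by an $r$-scaled copy of the other set.

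Consequently the composition you discarded is exactly the right one. The map $h\circ g$ sends $Y$ into $Y$ with ratio in $r^{-1}n^{\Q}\cdot r^{-1}m^{\Q}=r^{-2}n^{\Q}$. Lemma~\ref{lem:FW09}, applied to the proper deleted-digit set $Y=E(m,J)$, then gives $r^{2}\in m^{\Q}=n^{\Q}$, hence $r\in n^{\Q}$. This is the paper's argument. The paper composes in the other order, $h_1h_2(X)\subset X$, after observing that $X$ is also proper.

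Your replacement route does not close the gap. It needs an embedding $e\colon Y\to X$ with ratio in $m^{\Q}$ independent of $r$, and you produce one only when $I=[n]$. That case is vacuous: $h(X)\subset Y$ would put an interval inside the Lebesgue-null set $Y$. In the only genuine case, $I\neq[n]$, the phrase ``the analogous composition'' is backed by no construction. A self-map of $X$ whose ratio involves $r$ to the first power would again require an $r$-independent embedding between $X$ and $Y$, and nothing in the hypotheses supplies one.

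So, as written, the proof of $r\in n^{\Q}$ is missing. Correcting the ratio of $g$ and keeping the composition $h\circ g$ repairs it.
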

\begin{proof}
    If either $\#I=1$ or $\#J=1$, then $K$ is contained in a horizontal or vertical line, while $f(K)$ is contained in a perpendicular line. Their intersection therefore contains at most one point and has zero $\mu$-measure, a contradiction. Hence $\#I,\#J\geq2$.

	Since $O$ is anti-diagonal, there are $\varepsilon_1,\varepsilon_2\in\{-1,1\}$ such that 
	\[
	    f(x,y) = (r\varepsilon_1y+a_1, r\varepsilon_2x+a_2), \quad (x,y)\in\R^2.
	\]
	From $\mu(K\cap f(K))>0$ we infer that 
	\[
	    \pi_1\mu(X\cap (r\varepsilon_1 Y+a_1))>0 \quad\text{and}\quad \pi_2\mu(Y\cap(r\varepsilon_2 X+a_2))>0.
	\]
	Lemma~\ref{lem:deleted-embedding} yields two similitudes $h_1,h_2$ such that $h_1(Y)\subset X$ and $h_2(X)\subset Y$. From Remark~\ref{rem:full-no-interchange}, we see that $J\neq [m]$. Thus both $X$ and $Y$ must be proper deleted-digit sets. Applying Shmerkin's theorem~\cite[Corollary 7.6]{Shm19}, we have $\frac{\log n}{\log m}\in\Q$. Note that $h_1h_2(X)\subset X$. By Remark~\ref{rem:ratio-log}, the similarity ratio of $h_1h_2$ belongs to $r^{-2}n^{\Q}$. Then Lemma~\ref{lem:FW09} yields $r\in n^{\Q}=m^{\Q}$.
\end{proof}

For the diagonal case, we first consider when at least one of $X$ and $Y$ is a proper deleted-digit set, which is equivalent to that $\min\{\dimh X, \dimh Y\}<1$. The proof is similar as above.

\begin{proposition}\label{prop:log-1}
    Assume that $O$ is diagonal. Then $r\in m^{\Q}$ if $1<\#J<m$, and $r\in n^{\Q}$ if $1<\#I<n$.
\end{proposition}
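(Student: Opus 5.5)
We imitate the proof of Proposition~\ref{prop:log-1prime}, working one coordinate at a time. Since $O$ is diagonal, write $f(x,y)=(r\varepsilon_1x+a_1, r\varepsilon_2y+a_2)$ with $\varepsilon_1,\varepsilon_2\in\{\pm1\}$. For $(x,y)\in K\cap f(K)$ we have $x\in X\cap(r\varepsilon_1X+a_1)$ and $y\in Y\cap(r\varepsilon_2Y+a_2)$. The hypothesis $\mu(K\cap f(K))>0$ therefore gives
\[
    \pi_1\mu\bigl(X\cap(r\varepsilon_1X+a_1)\bigr)>0 \quad\text{and}\quad \pi_2\mu\bigl(Y\cap(r\varepsilon_2Y+a_2)\bigr)>0 .
\]
Here $\pi_1\mu$ is a self-similar measure on $X=E(n,I)$ with positive weights $\sum_{j}p_{i,j}$, $i\in I$. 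Likewise $\pi_2\mu$ is self-similar on $Y=E(m,J)$ with weights $q_j$.

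Consider the case $1<\#J<m$. We apply Lemma~\ref{lem:deleted-embedding} with $A=B=Y$, $q_1=q_2=m$, $\lambda=\pi_2\mu$ and $\psi(y)=r\varepsilon_2y+a_2$, so that $\alpha=r\varepsilon_2$. This produces an invertible similitude $h$ with $h(Y)\subset Y$. By Remark~\ref{rem:ratio-log}, in the equal-base case we have $\beta=1$, so the ratio of $h$ lies in $|\alpha|^{-1}m^{\Z}=r^{-1}m^{\Z}$.

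Next we check that $h$ is contracting, so that Lemma~\ref{lem:FW09} applies. From the construction, $h=\alpha^{-1}\beta^{-1}\varphi-\mathrm{const}$, with $\varphi$ a cylinder map of arbitrarily high level, since we may shrink the relatively open set in the Baire step. So we can take the ratio $r^{-1}m^{-k}<1$. Even without this, a non-contracting similitude mapping the compact set $Y$ into itself must be an isometry, and its ratio $1$ is trivially a power of $m$. Lemma~\ref{lem:FW09} now gives $r^{-1}m^{-k}\in m^{\Q}$, hence $r\in m^{\Q}$.

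The case $1<\#I<n$ is symmetric. We use $\pi_1\mu$ on $X=E(n,I)$ and $\psi(x)=r\varepsilon_1x+a_1$, which yields a self-embedding of $X$ with ratio in $r^{-1}n^{\Z}$, and Lemma~\ref{lem:FW09} gives $r\in n^{\Q}$.

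The main obstacle is the bookkeeping on the similarity ratio, namely confirming that Remark~\ref{rem:ratio-log} applies with $q_1=q_2$ and that the resulting map is a genuine contracting self-embedding. The measure-theoretic input is supplied entirely by Lemma~\ref{lem:deleted-embedding}.
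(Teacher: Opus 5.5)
Your proposal is correct and follows essentially the same route as the paper: you project to coordinate intersections of positive $\pi_i\mu$-measure, apply Lemma~\ref{lem:deleted-embedding} with equal bases, use Remark~\ref{rem:ratio-log} to place the ratio of the self-embedding in $r^{-1}m^{\Z}$ (resp.\ $r^{-1}n^{\Z}$), and conclude with Lemma~\ref{lem:FW09}. Your extra check that the self-embedding is contracting is harmless, since any similitude mapping the compact set $Y$ into itself has ratio at most $1$, and ratio $1=m^0$ is trivially admissible.
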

\begin{proof}
    Write $a=(a_1, a_2)$. Since $O$ is diagonal, there are $\delta_1,\delta_2\in\{-1,1\}$ such that 
    \[
        f(x,y) = (r\delta_1x+a_1, r\delta_2y+a_2), \quad (x,y)\in\R^2.
    \]
    From $\mu(K\cap f(K))>0$ we obtain
    \[
        \pi_1\mu(X\cap (r\delta_1 X+a_1))>0 \quad\text{and}\quad \pi_2\mu(Y\cap(r\delta_2 Y+a_2))>0.
    \]
    By Lemma~\ref{lem:deleted-embedding}, we can find similitudes $h_1, h_2$ such that $h_1(X)\subset X$ and $h_2(Y)\subset Y$. If $1<\#J<m$, then $Y=E(m,J)$ is a proper deleted-digit set with $\#J\geq 2$. By Remark~\ref{rem:ratio-log}, the similarity ratio of $h_2$ belongs to $r^{-1}m^{\Z}$. Combining this with  Lemma~\ref{lem:FW09}, we have $r\in m^{\Q}$. Similarly, if $1<\#I<n$, we can get $r\in n^{\Q}$ from $h_1(X)\subset X$.
\end{proof}

A direct corollary of Propositions~\ref{prop:log-1prime} and~\ref{prop:log-1} extends the similitude case of Algom and Hochman~\cite[Theorem~1.2]{AH19}.

\begin{corollary}
    Assume that $\frac{\log n}{\log m}\notin\Q$. Then $O$ is diagonal. Furthermore, if $1<\#J<m$ and $1<\#I<n$, then $r=1$.
\end{corollary}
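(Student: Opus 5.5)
The corollary combines the two preceding propositions.

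First, suppose $O$ were anti-diagonal. Proposition~\ref{prop:log-1prime} applies under the standing hypothesis $\mu(K\cap f(K))>0$ and forces $\frac{\log n}{\log m}\in\Q$. This contradicts the assumption $\frac{\log n}{\log m}\notin\Q$. By Theorem~\ref{thm:nonoblique}, $O$ is diagonal or anti-diagonal, so $O$ must be diagonal.

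Next, assume $1<\#J<m$ and $1<\#I<n$. Since $O$ is diagonal, Proposition~\ref{prop:log-1} gives both $r\in m^{\Q}$ and $r\in n^{\Q}$. Write $r=m^{s}=n^{s'}$ with $s,s'\in\Q$. Taking logarithms gives
\[
s\log m=s'\log n.
\]
If $r\neq1$, then $s,s'\neq0$, and hence $\frac{\log n}{\log m}=s/s'\in\Q$, a contradiction. Therefore $r=1$.

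No step here is a real obstacle. The only point to check is that the hypotheses of the cited propositions are met. Both are stated in this section under the standing assumption $\mu(K\cap f(K))>0$. Proposition~\ref{prop:log-1prime} needs no further condition. Proposition~\ref{prop:log-1} needs the digit-count conditions, and these are exactly the ones assumed in the second part of the corollary.
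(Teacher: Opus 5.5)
Your proposal is correct and follows the paper's own argument. Theorem~\ref{thm:nonoblique} together with Proposition~\ref{prop:log-1prime} rules out an anti-diagonal $O$, and Proposition~\ref{prop:log-1} gives $r\in m^{\Q}\cap n^{\Q}$, which equals $\{1\}$ when $\frac{\log n}{\log m}\notin\Q$.
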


For carpets that are not product sets, the diagonal case admits a stronger conclusion. We continue to write $M:=\diag(n,m)$ and use the subscript $\T$ to denote reduction modulo the appropriate integer lattice. The key ingredient is the following lemma, which shows that a diagonal affine self-embedding of $K$ must have equal logarithmic contraction rates with respect to the horizontal and vertical grid scales. The proof uses the idea from Proposition~\ref{prop:anti-product}.

\begin{lemma}
    Suppose $K\neq X\times Y$. Let $g$ be an invertible affine map on $\R^2$ with linear part $\diag(r_1,r_2)$. If $g(K)\subset K$, then $\frac{\log|r_1|}{\log n}=\frac{\log|r_2|}{\log m}$.
\end{lemma}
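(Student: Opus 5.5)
We argue by contradiction, following the disjoint-block idea of Proposition~\ref{prop:anti-product}. Write $\theta_1:=-\log|r_1|/\log n$ and $\theta_2:=-\log|r_2|/\log m$. These are positive if $g$ is contracting. If $g$ is not contracting, we first replace $g$ by a suitable composition: an invertible affine self-map of the compact set $K$ with $|r_i|\ge1$ forces $|r_1|=|r_2|=1$, which is already the claimed equality $0=0$. For a contracting $g$, iterates $g^s$ are still diagonal self-embeddings and multiply both exponents by $s$. So it suffices to assume $\theta_1\neq\theta_2$ and derive $X_{\T}\times Y_{\T}\subset K_{\T}$. Exactly as at the end of Proposition~\ref{prop:anti-product}, this gives $K=X\times Y$, a contradiction.

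The key construction is a renormalization. For $s\ge1$, choose integers $a_s,b_s\ge0$ with $n^{a_s}|r_1|^s\in[1,n)$ and $m^{b_s}|r_2|^s\in[1,m)$, so that $a_s\approx s\theta_1$ and $b_s\approx s\theta_2$. Consider the toral maps $M_1^{a_s}\times M_2^{b_s}$, that is, multiplication by $n^{a_s}$ in $x$ and by $m^{b_s}$ in $y$ modulo $\Z$. These are not maps of $K_{\T}$ individually. We therefore use the full map $M=\diag(n,m)$ raised to $\min(a_s,b_s)$, and then apply it on the slower coordinate in a way that keeps things inside $K_{\T}$. Concretely, suppose $\theta_1<\theta_2$, so that eventually $b_s-a_s\to\infty$. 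Set $G_s:=M^{b_s}g^s$ modulo $\Z^2$. Then $G_s(K_{\T})\subset M^{b_s}K_{\T}=K_{\T}$. In coordinates,
\[
G_s(x,y)=\bigl(\pm n^{b_s}|r_1|^s x+\alpha_s,\ \pm m^{b_s}|r_2|^s y+\beta_s\bigr),
\]
where the vertical factor lies in $[1,m)$ and the horizontal factor $n^{b_s-a_s}\cdot(\text{bounded})$ tends to infinity. Composing with $\phi_{u}$ for a cylinder word $u$ of horizontal length roughly $b_s-a_s$ renormalizes the first coordinate back to scale one. The first coordinate then reads digits of $\pi_1\Pi(\eta)$ starting at a level $\approx b_s-a_s$, while the second coordinate reads the vertical digits from level $1$. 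Better still, we can use $G_s$ on all of $K$ directly. The $x$-output depends on horizontal digits at levels $\ge b_s-a_s-O(1)$, and the $y$-output on vertical digits at levels $\le L$. These are disjoint levels once $b_s-a_s>L$, and they are independent under the Bernoulli measure $\nu$ (horizontal and vertical digits at different levels are independent, as in \eqref{eq:disjointlevels}).

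We pass to a subsequence along which the scale factors converge to $\gamma_1\in[1,n]$ and $\gamma_2\in[1,m]$, the translations converge, and the signs are constant. Then, repeating the proof of \eqref{eq:anti-circle-product} verbatim (the sets $E_t$ with prescribed digits on disjoint blocks have mass bounded below), every point of $\psi_1(X'_{\T})\times\psi_2(Y_{\T})$ lies in $K_{\T}$. Here $\psi_2(y)=\pm\gamma_2y+\beta$, and $X'$ is the closure of the set of points whose tails occur, that is, $X$ itself, since $\sigma$-tails of $X$ stay in $X$. Hence $\psi_1(X_{\T})\times\psi_2(Y_{\T})\subset K_{\T}$. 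Projecting gives $\psi_1(X_{\T})\subset X_{\T}$ and $\psi_2(Y_{\T})\subset Y_{\T}$. If the factors were $1$, Lemma~\ref{lem:compact-isometry} would give equality, and so $X_{\T}\times Y_{\T}\subset K_{\T}$.

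\textbf{Main obstacle.} The scale factors $\gamma_i$ need not equal $1$, so Lemma~\ref{lem:compact-isometry} does not apply directly. I would fix this by choosing $s$ along a subsequence for which the fractional parts $\{s\theta_1\}$ and $\{s\theta_2\}$ both accumulate at $0$. This is possible by simultaneous Dirichlet approximation applied to $(\theta_1,\theta_2)$, which gives $\gamma_1=\gamma_2=1$ and makes $\psi_1,\psi_2$ isometries of $\T$. Alternatively, we can use that the maps $\psi_i$ are expanding or isometric toral affine maps sending $X_{\T}$ into itself, and invoke Lemma~\ref{lem:FW09}-type rigidity. I expect the Dirichlet choice to work cleanly. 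The remaining care lies in bookkeeping the $O(1)$ overlap between the digit levels, and in ensuring the independence estimate holds uniformly, as in Lemma~\ref{lem:fullsptlimit}. The case $\theta_1>\theta_2$ is symmetric, with the roles of the coordinates swapped.
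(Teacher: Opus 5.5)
Your renormalization $G_s=M^{b_s}g^s$ is the same as the paper's $G_t=M^{k_t}g^t$. However, the step that is supposed to let you repeat the disjoint-block argument of Proposition~\ref{prop:anti-product} is false. Put $x=\pi_1\Pi(\eta)$, $N_s=b_s-a_s$ and $\gamma_{1,s}=n^{a_s}|r_1|^s\in[1,n)$. The horizontal output of $G_s$ at $\Pi(\eta)$ is $\pm\gamma_{1,s}n^{N_s}x+\alpha_s \pmod 1$. Write $n^{N_s}x=k+\pi_1\Pi(\sigma^{N_s}\eta)$, where $k\in\Z$ encodes the first $N_s$ horizontal digits. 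The output is then $\pm\gamma_{1,s}\pi_1\Pi(\sigma^{N_s}\eta)$ plus the translation $\pm\gamma_{1,s}k+\alpha_s$. Unless $\gamma_{1,s}$ is an integer, $\gamma_{1,s}k \bmod 1$ depends on the early digits, in particular on the level-$L$ word you use to match the vertical target. So the limit you actually obtain is a union of pieces $(\pm\gamma_1X+c_w)_{\T}\times(\cdots)$ with $w$-dependent translations $c_w$, not a product. The identity \eqref{eq:symbolic-Gt} that you want to repeat ``verbatim'' holds only because every factor there is an exact integer power. Dirichlet's theorem cannot fix this. The discrepancy $(\gamma_{1,s}-1)k$ involves $k$ of size up to $n^{N_s}$, with $N_s\asymp s(\theta_2-\theta_1)$. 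You would therefore need $|\gamma_{1,s}-1|=o(n^{-N_s})$, exponentially small in $s$, while Dirichlet only gives polynomial smallness.

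The missing ingredient is arithmetic, and the paper uses it at the outset. The map $\pi_1g$ is a similitude sending $X$ into itself, so if $X$ is a proper deleted-digit set, Lemma~\ref{lem:FW09} gives $\theta_1\in\Q$; similarly $\theta_2\in\Q$ when $Y$ is proper. Your mention of Lemma~\ref{lem:FW09} points the right way, but it must be applied to $g$ itself before renormalizing, not to the limit maps $\psi_i$. Restricting to $s$ divisible by the denominator gives $\gamma_{1,s}=1$ exactly. Then $(\pi_1G_s\phi_w(K))_{\T}=\pm X_{\T}+\alpha_s$ for every $w\in\Gamma^L$. This set is contained in, hence by Lemma~\ref{lem:compact-isometry} equal to, $X_{\T}$, which is exactly the $w$-independence your argument needs. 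For rational $\theta_1$ your Dirichlet subsequence does force $s\theta_1\in\Z$ eventually. So your proof works precisely in the situation Lemma~\ref{lem:FW09} guarantees; without it, a proper $X$ with irrational $\theta_1$ is left untreated. If $X=[0,1]$, no arithmetic is needed: $\pi_1G_s\phi_w(K)$ is an interval of length $\gamma_{1,s}n^{N_s-L}\ge 1$ and covers $\T$. The vertical coordinate, whose factor stays bounded, causes no trouble.

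With this input your argument becomes the paper's. Take $(x_0,y_0)\in(X\times Y)\setminus K$ inside $(0,1)^2$ with a toral neighbourhood missing $K_{\T}$. Match $y_0$ by a level-$L$ cylinder $\phi_w(K)$, then hit $x_0$ inside $G_s\phi_w(K)$. No measure or independence is needed, because $g(K)\subset K$ holds pointwise.

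Finally, your opening reduction is misstated. The projections give only $|r_1|,|r_2|\le 1$. The case $|r_1|=1>|r_2|$ is not ``$0=0$'' and must go through the main argument, where $a_s=0$ and it causes no difficulty.
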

\begin{proof}
    Projecting to the coordinate axes, we have $0<|r_1|,|r_2|\leq 1$. Moreover, if $X$ (resp. $Y$) is a proper deleted-digit set, then $|r_1|\in n^{\Q}$ (resp. $|r_2|\in m^{\Q}$). Replacing $g$ by $g^2$ if necessary, we may assume that $r_1,r_2>0$. Let $\alpha:=\frac{\log (1/r_1)}{\log n}$ and $\beta:=\frac{\log (1/r_2)}{\log m}$. Equivalently, $r_1=n^{-\alpha}$ and $r_2=m^{-\beta}$. Suppose $\alpha\neq\beta$.
    
    Since $K\neq X\times Y$, we can choose $(x_0,y_0)\in (X\times Y)\cap(0,1)^2$ and $\delta>0$ such that 
    \[
        B_{\T}((x_0,y_0),\delta) \cap K_{\T} = \varnothing.
    \]
    Fix a large integer $L$ so that $m^{1-L}<\delta/2$. 

    \medskip
    \noindent\emph{Case 1: $\alpha<\beta$.} Recall that $\alpha\in\Q$ (resp. $\beta\in\Q$) whenever $X$ (resp. $Y$) is a proper deleted-digit set. Define $\Z_X=\Z^+$ (resp. $\Z_Y=\Z^+$) if $X=[0,1]$ (resp. $Y=[0,1]$) and $\Z_X$ (resp. $\Z_Y$) is the set of all positive integer $t$ such that $\alpha t\in \Z$ (resp. $\beta t\in \Z$) if $X$ (resp. $Y$) is proper. For each $t\in \Z_X\cap \Z_Y$,
    set $k_t=\lceil \beta t\rceil$ and consider $G_t:=M^{k_t}g^t$. Then 
    \begin{equation}\label{eq:newGt}
        G_t(x,y) = (n^{k_t-\alpha t}x+u_t, m^{k_t-\beta t}y+v_t), \quad (x,y)\in\R^2
    \end{equation}
    for some $u_t,v_t\in\R$. Note that $n^{k_t-\alpha t}\to\infty$ as $t\to\infty$ but $1\leq m^{k_t-\beta t}< m$. Moreover, since $g^t(K)\subset K$, we have $(G_t(K))_{\T}\subset (M^{k_t}K)_{\T}=K_{\T}$.
    
    We claim that $(\pi_1G_t(K))_{\T}=X_{\T}$ and $(\pi_2G_t(K))_{\T}=Y_{\T}$ for all large $t$. Indeed, if $X$ is a proper deleted-digit set, then $k_t-\alpha t$ is a nonnegative integer and hence 
    \[
       X_{\T}\supset (\pi_1G_t(K))_{\T}=X_{\T}+u_t.
    \]
    By Lemma~\ref{lem:compact-isometry}, $(\pi_1G_t(K))_{\T}=X_{\T}$. If $X=[0,1]$, then $n^{k_t-\alpha t}>1$ for all large $t$ and hence $(\pi_1G_t(K))_{\T}=X_{\T}$. The same argument also implies that 
    \begin{equation}\label{eq:cylinder-modulo}
        (\pi_1G_t\phi_w(K))_{\T} = X_{\T}, \quad \forall w\in\Gamma^L
    \end{equation}
    for all large $t$. Similarly, noticing that $k_t-\beta t=0$ if $Y$ is proper and $m^{k_t-\beta t}\geq 1$ if $Y=[0,1]$, one can show that $(\pi_2G_t(K))_{\T}=Y_{\T}$. 

    Fix any such $t$. Since $(\pi_2G_t(K))_{\T}=Y_{\T}$, there exists $z_t\in K$ with $\pi_2G_t(z_t)=y_0\pmod\Z$. Let $w$ be the length-$L$ prefix of a coding of $z_t$. By~\eqref{eq:cylinder-modulo}, there is $z_*\in\phi_w(K)$ such that $\pi_1G_t(z_*)=x_0\pmod\Z$. Since $z_t,z_*\in\phi_w(K)$, 
    \[
        \dist_{\T}(\pi_2G_t(z_*), y_0) \leq m^{k_t-\beta t}|\pi_2(z_*)-\pi_2(z_t)| \leq m^{1-L} < \delta/2.
    \]
    But this implies that $(G_t(z_*))_{\T}\in B_{\T}((x_0,y_0),\delta)$, contradicting $(G_t(K))_{\T}\subset K_{\T}$.

    \medskip
    \noindent\emph{Case 2: $\beta<\alpha$.} The case follows by interchanging the coordinate roles and taking $k_t=\lceil\alpha t\rceil$. The corresponding error is at most $n^{1-L}\leq m^{1-L}<\delta/2$.
\end{proof}

\begin{proposition}\label{prop:log-2}
    If $K\neq X\times Y$ and $O$ is diagonal, then $r=1$. In particular, $r=1$ if $J=[m]$ and $I=[n]$.
\end{proposition}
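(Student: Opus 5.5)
The plan is to manufacture from the intersection hypothesis a diagonal affine self-embedding $g$ of $K$ with linear part $\diag(r_1,r_2)$, where $|r_1|$ and $|r_2|$ are controlled by $r$ via $|r_1|\in r^{-1}n^{\Z}$ and $|r_2|\in r^{-1}m^{\Q}$. The preceding lemma then gives $\frac{\log|r_1|}{\log n}=\frac{\log|r_2|}{\log m}$. Finally, I would show that this equality forces $r=1$.

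For the construction, I would follow the normalization in Proposition~\ref{prop:rigidity-carpet} and Lemma~\ref{lem:deleted-embedding}. Take $A=\Pi^{-1}(K\cap f(K))$ and a cylinder density point $\omega\in A$, and set $w_N=\omega|_N$. Then $\mu(\phi_{w_N}^{-1}f(K))\to1$. The map $\phi_{w_N}^{-1}f$ has linear part $\diag(\pm rn^N,\pm rm^N)$, so its image of $K$ is a huge stretched copy of $K$. To bring it back to bounded size, I would use $M^{k}K\subset K+\Z^2$ separately in each coordinate. Choose $k_N$ with $rn^N n^{-k_N}\in[1,n)$, and choose the vertical rescaling by a power of $m$ so that $rm^N m^{-j_N}\in[1,m)$.

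One difficulty is that a single application of $M$ scales both coordinates simultaneously. I would therefore write the stretched set as contained in $\diag(\beta_N,\gamma_N)(K+\text{lattice translates})$ with $\beta_N\in[1,n)$ and $\gamma_N\in[1,m)$. This uses that $\diag(n^{a},m^{b})K\subset K+\Z^2$ only when $a=b$, which is the obstacle I expect. If $a\ne b$, I would use the product covering by approximate-square rows instead, as in Lemma~\ref{lem:finite-covering}. Passing to limits $\beta_N\to\beta$ and $\gamma_N\to\gamma$, Lemma~\ref{lem:finite-cover-limit} gives that $K$ is covered by finitely many sets $\diag(\pm\beta,\pm\gamma)(K+v)$. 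Baire's theorem then yields a cylinder $\phi_u(K)$ inside one of them, and so an affine $g$ with linear part $\diag(\beta^{-1}n^{-|u|},\gamma^{-1}m^{-|u|})$ (up to sign) satisfying $g(K)\subset K$.

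In practice, I would sidestep the mixed-scale problem by taking $k$ equal in both coordinates and tracking the actual exponents. The $N$-th normalized map has linear part $\diag(rn^{N-k},rm^{N-k})$ up to sign. Choose $k=k_N$ so that the horizontal factor lies in $[1,n)$; the vertical factor $rm^{N-k}$ then tends to $0$. Because of this, the limit is not a bounded copy in the vertical direction, so I would instead use approximate squares: take level $\ell$ horizontally and $\widehat\ell$ vertically, with density point $C_\ell(\omega)$ and the tangent machinery of Lemma~\ref{lem:local-form}. In this version the vertical normalization is $m^{\widehat\ell}$ versus $n^\ell$, which produces comparable factors. The resulting embedding has $|r_1|=c_1r^{-1}n^{-a}$ and $|r_2|=c_2r^{-1}m^{-b}$, and all the freedom in $(a,b)$ shows up in bounded constants $c_i$.

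Taking the logarithmic identity for $g$ and for iterates of $g$ composed with cylinder maps, the equality $\frac{\log|r_1|}{\log n}=\frac{\log|r_2|}{\log m}$ holding along two independent families should force $\log r\,(\frac1{\log n}-\frac1{\log m})=0$. Since $n\neq m$, this gives $r=1$. The second assertion, that $r=1$ when $J=[m]$ and $I=[n]$, follows because $\#\Gamma<nm$ then forces $K\ne X\times Y=[0,1]^2$. The main obstacle is building $g$ with exactly controlled ratios despite the anisotropic rescaling.
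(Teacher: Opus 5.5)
Your proposal has a genuine gap at exactly the point you flag as the main obstacle. You never produce an embedding $g$ whose horizontal and vertical ratios carry the \emph{same} exponent, and that is the only information that forces $r=1$.

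In fact the anisotropy is not an obstacle. Since $O$ is diagonal, $rO$ commutes with $M=\diag(n,m)$, so no separate normalization in each coordinate is needed. Writing $\phi_{w_N}(z)=M^{-N}z+s_N$, one has $\phi_{w_N}^{-1}f(K)=rO\,M^NK+t_N\subset rO(K+\Z^2)+t_N$. Equivalently, as in the paper, $M^tf^{-1}\phi_{w_t}(z)=r^{-1}O^{-1}z+c_t$ has linear part independent of $t$. Reduce $c_t$ modulo $\Z^2$, pass to a limit $c_*$, and use $\nu\{\eta: w_t\eta\in A\}\to1$ at a cylinder density point. This gives $r^{-1}O^{-1}K+c_*\subset K+\Z^2$. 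Baire's theorem then yields a cylinder $\phi_w(K)$, hence $g(K)\subset K$ with linear part $\diag(\pm r^{-1}n^{-k},\pm r^{-1}m^{-k})$, where $k=|w|$. The preceding lemma reads $-\log_n r-k=-\log_m r-k$, so $r=1$ because $n\neq m$.

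The detour you take instead does not work, for three reasons.
\begin{itemize}
\item With $rn^{N-k_N}\in[1,n)$, the integer $N-k_N$ is eventually constant. So $rm^{N-k_N}$ is a fixed constant and does not tend to $0$.
\item The approximate-square machinery of Lemma~\ref{lem:local-form} produces limiting inclusions between tangent sets $C(\xi)\times Y$ (with unknown diagonal factors in $[m^{-1},1]$). It does not give a self-embedding of $K$, so the preceding lemma does not apply to it.
\item Even granting some $g$ with $|r_1|=c_1r^{-1}n^{-a}$ and $|r_2|=c_2r^{-1}m^{-b}$, where the $c_i$ are unknown or $a\neq b$, the identity $\frac{\log|r_1|}{\log n}=\frac{\log|r_2|}{\log m}$ is a single equation that does not pin down $r$. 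Iterating $g$ or composing with cylinder maps adds nothing: $g^j$ multiplies both sides by $j$, and $g\phi_u$ subtracts $|u|$ from both sides. So there are no ``two independent families''.
\end{itemize}

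Finally, for the second assertion you also need that $O$ is diagonal. The paper gets this from Remark~\ref{rem:full-no-interchange}, which excludes the anti-diagonal case when $J=[m]$. Checking $K\neq X\times Y$ alone is not enough.
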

\begin{proof}
    Let $A:=\Pi^{-1}(K\cap f(K))$. By Lemma~\ref{lem:density}, there exists a cylinder density point $\omega\in A$. For $t\geq 1$, set $w_t:=\omega|_t$ and $A_t:=\{\eta\in\Sigma: w_t\eta\in A\}$. So 
    \[
        \nu(A_t) = \frac{\nu(A\cap[w_t])}{\nu([w_t])} \to 1, \quad t\to\infty.
    \]
    Since $r^{-1}O^{-1}$ commutes with $M$, there are $c_t\in\R^2$ such that  
    \[
        M^tf^{-1}\phi_{w_t}(z) = r^{-1}O^{-1}z+c_t, \quad \forall t\geq 1, z\in\R^2.
    \]
    By definition, for every $\eta\in A_t$ we have 
    \[
        r^{-1}O^{-1}\Pi(\eta)+c_t = M^tf^{-1}\phi_{w_t}(\Pi(\eta)) \in M^tK \subset K+\Z^2.
    \]
    Note that subtracting integer vectors from $c_t$ preserves this inclusion. 
    Passing to a subsequence, we may assume that $c_t\to c_* \pmod {\Z^2}$ for some $c_*\in[0,1)^2$. We claim that $r^{-1}O^{-1}K+c_*\subset K+\Z^2$. Otherwise, since $K+\Z^2$ is closed, some point of $K$ would have a neighborhood of positive $\mu$-measure whose images under the maps $z\mapsto r^{-1}O^{-1}z+c_t$ avoid $K+\Z^2$ for all large $t$. Its symbolic preimage would then avoid $A_t$, which contradicts $\nu(A_t)\to 1$.
 
    Only finitely many integer translates of $K$ intersect the set $r^{-1}O^{-1}K+c_*$. By Baire's theorem, there exists $e\in\Z^2$ such that $K+e$ contains a non-empty relatively open subset of $r^{-1}O^{-1}K+c_*$, say $r^{-1}O^{-1}\phi_w(K)+c_*\subset K+e$ for some $w\in\Gamma^*$. Let $k$ be the length of $w$. Writing $O=\diag(o_1,o_2)$, where $o_1,o_2\in\{\pm 1\}$, we have $g(K)\subset K$, where 
    \[
        g(x,y) = (r^{-1}o_1^{-1}n^{-k}x+u, r^{-1}o_2^{-1}m^{-k}y+v)
    \]
    for some $u,v\in\R$. It then follows from the previous lemma that $\frac{\log |r^{-1}n^{-k}|}{\log n}=\frac{\log |r^{-1}m^{-k}|}{\log m}$. So $\frac{\log r}{\log n}=\frac{\log r}{\log m}$. Since $n>m$, this holds only when $r=1$.

    If $I=[n]$ and $J=[m]$ then $X=Y=[0,1]$. Since $\Gamma\neq[n]\times[m]$, $K\neq X\times Y$. By Remark~\ref{rem:full-no-interchange}, $O$ is diagonal. The conclusion then follows from the first part.
\end{proof}

\begin{proof}[Proof of Theorem~\ref{thm:log}]
    By Theorem~\ref{thm:nonoblique}, $O$ is either diagonal or anti-diagonal. Proposition~\ref{prop:log-1prime} establishes the anti-diagonal case. For the diagonal case, since $K$ is not a segment, if neither $1<\#J<m$ nor $1<\#I<n$, then $J=[m]$ and $I=[n]$. The second assertion then follows from Propositions~\ref{prop:log-1} and~\ref{prop:log-2}.
\end{proof}

\bigskip
\noindent{\bf Acknowledgements.} 
Ruan is partially supported by NSFC grants Nos. 12371089 and 12531004, and the Fundamental Research Funds for the Central Universities of China grant No. 2024FZZX02-01-01. Xiao is partially supported by NSFC grant No. 12501113 and the Fundamental Research Funds for the Central Universities of China grant No. NS2025025. The project was initiated during Xiao's visit to Budapest in February 2026. He would like to thank Tam\'as Keleti for introducing him to the conjecture and for a stimulating discussion of the problem, during which several preliminary ideas related to the present work were explored.

\medskip
\noindent{\bf Declaration on AI assistance.}
Generative AI tools were used during the preparation of this manuscript to assist with language editing and proof checking, and to improve the clarity and organization of the exposition. All mathematical results, arguments, and proofs were developed and verified by the authors.

\small
\bibliographystyle{amsplain}

\end{document}